\newtheorem{thm}{Theorem}
\newtheorem{defi}{Definition}
\newtheorem{prop}{Proposition}
\newtheorem{lem}{Lemma}
\newtheorem{cor}{Corollary}
\title{Projective geodesic extensions by conformal modifications in nonholonomic mechanics}
\author{ Malika Belrhazi and Tom Mestdag \\[2mm]
	{\small Department of Mathematics,  University of Antwerp,}\\
	{\small Middelheimlaan 1, 2020 Antwerpen, Belgium}
}
\date{}
\def\nphi{F}
\def\nvarphi{f}
\def\Gammanh{\Gamma^{nh}}
\def\redvf{\Gamma^{red}}
\def\alphabar{\alpha}
\def\ort{(V\pi,{\mathcal D})}
\def\h{{k}}
\begin{document}










\maketitle

\begin{abstract} Projective geodesic extensions are reparametrizations of the trajectories of a nonholonomic mechanical system (with only a kinetic energy Lagrangian), in such a way that they can be interpreted as part of the geodesics of a Riemannian metric. We derive necessary and sufficient conditions for the existence of these extensions, in the case where the constrained Lagrangian remains preserved up to a conformal transformation. When the nonholonomic system has a symmetry group (a Chaplygin system), we clarify the relation between projective geodesic extensions and closely related concepts, such as $\varphi$-simplicity, invariant measures and Hamiltonization. Throughout the paper, new and relevant examples illustrate the key differences between all these concepts.  
\end{abstract}

\vspace{3mm}

\textbf{Keywords:} Lagrangian systems, nonholonomic constraints, Riemannian metrics, Chaplygin system, geodesic extension, projective transformation, conformal change, invariant measure.

\vspace{3mm}

\textbf{Mathematics Subject Classification:} 
37J06,
37J60,
53Z05,
70G45,
70G65.


\section{Introduction}

The idea that the governing equations of a physics problem should somehow be ``optimal'' or ``minimal'' with respect to the physical input is very prominent in many models of mathematical physics (and of science, in general). For example, in general relativity, the paths that objects follow when influenced only by gravity are geodesics. Roughly speaking, they represent the shortest possible paths the objects can follow in curved spacetime. In the absence of either a potential or external forces, the same can be said about mechanical systems: In that situation, the Euler-Lagrange equations are equivalent with the geodesic equations of the metric that defines the `purely kinetic' Lagrangian. This property remains valid when the system is subjected to holonomic (position-depending) constraints: one may take the holonomic constraints implicitly into account by appropriately shrinking the configuration space. Apart from the inherent beauty of relying on optimal equations, a mathematical model of a physical problem that is based on geodesics has also many other advantages: For example, we may investigate the influence of curvature, exploit the benefits of symmetry, study its integrability, apply geometric numerial integrators, etc. 

In mechanics, the situation changes when the problem includes nonholonomic constraints. These are  non-integrable velocity-dependent restrictions of the motion of the system, such as e.g.\ those caused by wheels (that roll without slipping), skates (that are prevented to move in a direction perpendicular to the blade), pursuit problems, etc. Here, we consider only nonholonomic constraints that can be given in the form of linear (autonomous) equations, and that therefore can be represented by a non-integrable distribution $\mathcal{D}$ on the configuration manifold $Q$ of the mechanical system. Even though most of our results may be extended by including a potential at the appropriate places, we have chosen to limit ourselves here to purely kinetic systems, for the clarity of our exposition.  If $L$ is the (kinetic energy) Lagrangian of the system, the equations of motion for such systems are the so-called {\em Lagrange-d'Alembert equations} of $(L,\mathcal{D})$. They form  a set of first- and second-order ordinary differential equations whose solutions we will call the {\em nonholonomic trajectories} of $(L,\mathcal{D})$. The Lagrange-d'Alembert equations are not Euler-Lagrange equations, and are also not geodesic equations. However, it still remains useful to identify the solutions of these equations as (part of the possibly larger set of) geodesics of some metric  on the configuration space. In the current paper, as in our paper \cite{our}, we investigate methods by which we can determine such a metric. If that can be achieved,  we will speak of  a `geodesic extension' of the nonholonomic dynamics (even though this question seems to be dubbed `the kinetic Lagrangianization  problem' in the conclusions of the paper \cite{Simoes}).

For a geodesic extension, the nonholonomic trajectories of $(L=\frac12 g,\mathcal D)$ are among the geodesics of a newly constructed Riemannian metric $\hat{g}$. A special type of geodesic extension, is the one where $\hat g$ is a so-called {\em $\mathcal{D}$-preserving modification} of the metric $g$. This means that  $\hat{g}$  preserves $g$ on the constraint distribution $\mathcal D$. Under this assumption, we were able to derive in \cite{our}  two conditions (denoted by $(A)$ and $(B)$ in \cite{our}) whose solutions can be used to construct a geodesic extension $\hat g$. Even though this is already a very strong tool, there exists systems for which our method can not be applied (for example, when there is only one nonholonomic constraint, see the discussion in Section~9 of \cite{our}). In order to apply our theory to a broader class of nonholonomic systems, we will present in this paper an interesting generalization, by adding an extra twist. 

After some preliminaries in Section~\ref{sec:prelim}, we introduce in Section~\ref{sec:pge} a new concept: {\em projective geodesic extensions}. Here, we search for a Riemannian metric $\hat{g}$ which is such that its {\em pre}geodesics contain the nonholonomic trajectories, that is:  geodesics of the metric $\hat g$, but possibly with a different parametrization than the one by time. In particular, we wish to accomplish this by means of a {\em $\mathcal{D}$-conformal modification}. This is, in essence, a $\mathcal{D}$-preserving modification, up to an extra conformal transformation by a conformal factor $\nphi$. Under these assumptions, we derive in Lemma~\ref{lem:ApBp} two new conditions (now: $(A')$ and $(B')$), to obtain a valid projective geodesic extension. We illustrate our method with the examples of the generalized  nonholonomic particle  and the two-wheeled carriage. We will continue to use these examples throughout the whole paper, because they neatly demonstrate different aspects of our results.

In the literature, one may already find some earlier results concerning (what we now call) projective geodesic extensions. For example, in \cite{Simoes}, the authors construct one such   extension (in their Theorem~3.4), but only for the case of a {\em Chaplygin system} (a nonholonomic system with a symmetry Lie group) that is, in addition,   {\em $\varphi$-simple} (in the wording of  \cite{Garcia}). In this paper we start from scratch, at the level of an arbitrary kinetic nonholonomic system, and we use a different approach. One of the main goals of this paper is to show that the very specific case of Theorem~3.4 in \cite{Simoes} is not the only one of interest: we will find (both in theory and in examples) projective geodesic extensions for nonholonomic systems that do not adhere to any of these strong assumptions.

If we set the geodesic extension problem in a broader context, it can be regarded as a special case of the kind of {\em Hamiltonization problems} that have been discussed in e.g.\  \cite{BFM,marta}. There, the main idea is to enlarge the mixed first- and second order system of $(L,\mathcal{D})$  to a (chosen) full system of second-order differential equations, and to write those as the Euler-Lagrange equations of a new Lagrangian $\hat L$.  When compared to the setting of this paper, both the given Lagrangian $L$ and the sought-for Lagrangian $\hat L$ are here of purely-kinetic-type (i.e.\ coming from a Riemannian metric and with constant potential).

Unfortunately, the terminology Hamiltonization is over-used in the literature. In other papers such as e.g.\ \cite{BalseiroFernandez,Balseiro,Bol,Ehlers} (just to recall a few), Hamiltonization stands for the approach where one uses the symmetry group $G$ of the (Chaplygin) nonholonomic system to reduce it from a dynamical system on $Q$ to a system on the quotient manifold $Q/G$. This {\em reduced system} is then said to be {\em Hamiltonizable} if one can find a symplectic or Poisson structure on the quotient manifold for which it is Hamiltonian. To  set this apart from the Hamiltonization we discussed earlier, we will call this {\em Hamiltonian reparametrization}. In the context of this type of Hamiltonization, new concepts such as $\varphi$-simplicity, dynamical gauge transformations, and invariant measures have recently been shown to play  a key role \cite{Simoes, Sansonetto,Cantrijn,Garcia, Fedorov,Zenkov}. One of the main strengths of the current paper is that we are able to unify both approaches to Hamiltonization, and that we can clarify the interplay between these various subtly different notions and concepts. 

For that puropose, we further explore the applicability of the conditions $(A')$ and $(B')$, in the event of a  Chaplygin system in Section~\ref{chap}. Because of the symmetry properties of this type of nonholonomic systems, we can simplify these conditions (see Prop \ref{CondB}) and at the end of the section we show how linear first integrals can be employed. The technique is then applied to the generalized  nonholonomic particle. Throughout the paper, special attention is given to the subclass of geodesic extensions $\hat g$ which have the property that the vertical distribution $V\pi$ of the principal fibre bundle $\pi: Q\to Q/G$ is orthogonal to the constraint distribution $\mathcal D$. We call these type of conformal geodesic extensions {\em $\ort$-orthogonal.} The above mentioned result of \cite{Simoes} falls into this category. In Corollary~\ref{corollary} we show that, in that case, condition $(B')$ is even redundant. This explains why condition $(B')$ does not seem to have made any appearance yet in the literature: the assumption of  $\ort$-orthogonality is often implicitly assumed.

We have already mentioned that some of the well-known examples of Chaplygin systems possess the $\varphi$-simplicity property. In Section~\ref{pges}, we investigate the relation between this property and our condition $(A')$, first at the level of the full configuration space $Q$. We use the examples to demonstrate that the class of conformal geodesic extensions is meaningful: The metrics we had found in Section \ref{firstmodpar} lead to conformal geodesic extensions, that are more general than those that can be obtained through the  $\varphi$-simplicity of the system. 

Because $\varphi$-simplicity is essentially related to Hamiltonian reparametrization of the reduced equations, we reduce our condition $(A')$ to the quotient space (Lemma~\ref{inv}). We then show in Proposition~\ref{lem:equiv} (and in its special case, Corollary~\ref{lem5}) that we can give an equivalent characterization of our condition $(A')$ that is very reminiscent to (but more general than) the one that can be given for $\varphi$-simplicity (in Proposition~\ref{phi-simpl}). Next, we see that the $\varphi$-simplicity property is only a sufficient condition for the existence of a conformal geodesic extension, and we characterize the obstruction in Proposition~\ref{prop5}.

Hamiltonian reparametrization of the reduced system is also closely related to the existence of an invariant measure (see  e.g.\ \cite{ohsawa,Balseiro, Sansonetto}). In Section~\ref{sec:invmeasure}, we discuss the existence of an invariant measure on the reduced space. We first give a sufficient, and then an equivalent,  condition for the existence of an invariant measure, in terms of the reduced version of $(A')$. For the situation of an $\ort$-orhogonal geodesic extension, we give in Proposition~\ref{classification} a full overview of all the conditions that lead to either conformal geodesic extensions, $\varphi$-simplicity  or an invariant measure. It is important to realize that none of the classes in our classification are void: we are able to construct a (mathematical) example of a Chaplygin system that is not $\varphi$-simple, but still allows for a $\ort$-orthogonal projective geodesic extension. 

We end this paper by examining  Hamiltonian reparametrization of the reduced equations. In Proposition~\ref{prop14} (i) we show that, once we have a projective geodesic extension (on the full space), under a further sufficient condition, we are able to give a reparametrization of the solutions of the reduced equations, so that they also become the geodesics of some Riemannian metric on the quotient manifold $Q/G$. This is, therefore, in effect a Hamiltonian reparametrization.  We further show that, under the assumption of $\ort$-orthogonality, the sufficient condition of Proposition~\ref{prop14} (i) is always satisfied. 
In this sense, this also generalizes the result in Theorem~3.4 (in essence their Corollary~3.3) of \cite{Simoes} that we had quoted earlier. We believe that the current approach via geodesic extensions provides an answer to a question that was raised at the end of \cite{Garcia}: to find examples of Hamiltonian reparametrization of Chaplygin systems which are not $\varphi$-simple. This is the case for our mathematical example.

Throughout this paper, the concepts we introduce are globally defined. However, in specific examples one often finds only local expressions. In Section~\ref{localglobal} we discuss some consequences of this in some detail.

\section{Preliminaries} \label{sec:prelim}

\subsection{Purely kinetic nonholonomic systems} \label{prelimpk}

Most of these preliminaries (with some more details) can also be found in \cite{our} and are repeated here only to keep the paper self-contained.

In the next sections, we always assume that $Q$ is an $n$-dimensional real manifold, with local coordinates $(q^\alpha)$. The set $\{X_\alpha\}$ of vector fields on $Q$ represents an anholonomic frame, with Lie brackets $[X_\alpha,X_\beta] = R^\gamma_{\alpha\beta}X_\gamma$. We may use it to decompose any tangent vector $v_q\in T_qQ$ as $v_q = v^\alpha X_\alpha (q)$. We  call the components $v^\alpha$ the {\em quasi-velocities} of $v_q$ and we often use $(q^\alpha,v^\alpha)$ as (non-standard) coordinates on the tangent manifold $TQ$.

There are two canonical ways to lift a vector field $X$ on $Q$ to a vector field on $TQ$.  If  $X=X^\alpha\partial/\partial{q^\alpha}$, then in natural coordinates $(q^\alpha, {\dot q}^\alpha)$ on $TQ$, the {\em complete} and {\em vertical} lifts of $X$ are, respectively, given by
$$
 X^C=X^\alpha \frac{\partial}{\partial q^\alpha}+\dot{q}^\beta \frac{\partial X^\alpha}{\partial q^\beta}\frac{\partial}{\partial \dot{q}^\alpha} \quad\text{and}\quad X^V=X^\alpha \frac{\partial}{\partial \dot{q}^\alpha}.
$$

A vector field $\Gamma$ on $TQ$ is a so-called {\em second-order ordinary differential equations field} (SODE, in short) if all its integral curves $\gamma: I\rightarrow TQ$ are of the type $\gamma=\dot{c}$. The corresponding curves  $c:I\rightarrow Q$ are called {\em base integral curves} of $\Gamma$.  
When expressed in terms of the frame $\{X_\alpha^C,X_\alpha^V\}$ of vector fields on $TQ$, a SODE $\Gamma$ is of the form 
$$
\Gamma=v^\alpha X_\alpha^C+F^\alpha X_\alpha^V,
$$
for some functions $F^\alpha\in\mathcal{C}^\infty(TQ)$. The quasi-velocity components $(v^\alpha(t))$ of the integral curves $\gamma(t)$ of $\Gamma$ satisfy
$\dot{v}^\alpha=F^\alpha$.



The main focus of this paper is on  nonholonomic mechanical systems.
\begin{defi}
 A nonholonomic system on a manifold $Q$ consists of a pair $(L,\mathcal{D})$, where $L:TQ\to \mathbb{R}$ is the Lagrangian of the system and $\mathcal{D}$ is the $m$-dimensional distribution on $TQ$ that describes the (linear) nonholonomic constraints.  
\end{defi}
If we set $k:=n-m$, the indices $\alpha$ run from 1 to $n$, $a$ from 1 to $m$ and $i$ from 1 to $k$. 

The {\em Lagrange-d'Alembert principle} (see e.g.\ \cite{Bloch,Cortes}) leads to the equations of motion for a nonholonomic system. If $v_q\in \mathcal{D}_q$, the  nonholonomic trajectory  $c_{v_q}(t) = (q^\alpha(t))$  that starts at $v_q$  is a solution of the differential equations 
\begin{equation}
\begin{cases}
\displaystyle\mu_\alpha^i\dot{q}^\alpha=0,
\\
\displaystyle\frac{d}{dt}\Big(\frac{\partial L}{\partial \dot{q}^\alpha}\Big)-\frac{\partial L}{\partial q^\alpha}=\sum_{i=1}^k{\Lambda_i} \mu^i_{\alpha}.
\end{cases}\nonumber 
\end{equation}
The first equations represent the nonholonomic constraints. In the second equations, the functions $\Lambda_i(t)=\Lambda_i(q(t),\dot{q}(t))$ are called the {\em Lagrange multipliers} of the system. 

We can conceive the nonholonomic trajectories as the base integral curves of a vector field $\Gammanh$. We follow here the approach of e.g.\ \cite{anhol}.  First, assume that the frame $\{X_\alpha\} = \{X_a,X_i\}$ is such that  the vector fields $\{X_a\}$ span ${\rm Sec}(\mathcal{D})$ and that $(v^\alpha)=(v^a,v^i)$ are the quasi-velocities with respect to this frame.  This allows us to characterize the nonholonomic constraints in a simple manner: $v_q\in D_q$ if and only its corresponding quasi-velocities satisfy  $v^i=0$. Under the regularity condition that the matrix
$$
X_a^V(X_b^V(L))
$$
is everywhere on $TQ$ non-degenerate,
the {\em nonholonomic vector field} $\Gammanh$ can be characterized as the unique vector field of the type
$$
\Gammanh=v^a X_a^C+F^a X_a^V,
$$
that satisfies the relations
$$
\Gammanh(X^V_a(L))-X^C_a(L)=0, \qquad \textrm{on }\mathcal{D}.
$$ 
Once $\Gammanh$ has been determined from the above, the functions $\lambda_i:=\Gammanh(X^V_i(L))-X^C_i(L)$ on $\mathcal{D}$ play the role of the Lagrangian multipliers (with respect to the current frame).

In what follows, we will only consider a special subclass: that of a Lagrangian that has no (or only constant) potential energy, and a kinetic energy function that comes from a Riemannian metric $g$, i.e.\ a Lagrangian is of the `purely kinetic' type 
\begin{equation} \label{pklag}
L(q,v)=\frac{1}{2}g_q(v,v).
\end{equation}
In that case, the regularity condition we mentioned before is always satisfied by virtue of the positive-definiteness of the Riemannian metric $g$.

Starting from a given frame of vector fields, we can always construct a new one, $\{X_a,X_i\}$, that is such that $\{X_a\}$ span ${\rm Sec}(\mathcal{D})$ and  that $\{X_i\}$ span the sections of its orthogonal complement ${\rm Sec}(\mathcal{D}^g)$. In this frame, we use the notations $g_{ab}=g(X_a,X_b)$ and $g_{ij}=g(X_i,X_j)$, while $g(X_a,X_i)=0$. From now on, we will always assume that $\{X_a,X_i\}$ is an orthogonal frame for $\mathcal{X}(Q)$.


In the case of a purely kinetic Lagrangian (\ref{pklag}), the nonholonomic vector field $\Gammanh$ takes an easy form (see e.g.\ Lemma~1 in \cite{our}). If $\nabla$ is the Levi-Civita connection of $g$ and if $\nabla_{X_a} X_b = \Gamma_{ab}^c X_c + \Gamma_{ab}^i {X}_i$, then 
$$
\Gammanh=v^a X_a^C -  \Gamma^a_{bc}v^bv^c X_a^V.
$$
As a consequence, besides $v^i=0$, the remaining quasi-velocities of a nonholonomic trajectory  satisfy 
\begin{equation} \label{nonholeqmotion}
{\dot v}^d = -\Gamma^d_{ab}v^a v^b.
\end{equation}
In \cite{our}, we have also computed that in this case
\begin{equation}\label{lambda}
\lambda_i = g_{ki}\Gamma^k_{ab}v^av^b.
\end{equation}

\subsection{Chaplygin systems} \label{prelimChap}

From Section~\ref{pges} onwards, we only consider {\em Chaplygin systems} (also called generalized Chaplygin systems or nonholonomic systems of principal type, see e.g.  \cite{Bloch,Cantrijn,Garcia}). They are essentially nonholonomic systems with extra symmetry properties.  In case of  a purely kinetic Chaplygin system,  both the Riemannian metric $g$ (and therefore also its Lagrangian $L$) and the constraint distribution $\mathcal{D}$ are invariant under the (free and proper) action $G\times Q \to Q$ of a Lie group $G$ on the configuration manifold $Q$ (when lifted to $TQ$). In addition, we assume that $\pi: Q\to Q/G$ is a principal fibre bundle and that the nonholonomic distribution $\mathcal{D}$ is the horizontal distribution of a principal connection $\omega$ on $\pi$. Even though there exist nonholonomic systems with more general symmetry properties, we will only consider Chaplygin systems here.

Let $\{E_i\}$ be a basis for the Lie algebra of $G$ and let $\{V_i:=(E_i)_Q\}$ be their corresponding infinitesimal generators. They form a basis for ${\rm Sec}(V\pi)$, with $V\pi$ the vertical distribution of $\pi$. If $(q^a)$ are coordinates on $Q/G$, the horizontal lifts $X_a$ of the vector fields $\displaystyle\frac{\partial}{\partial q^a}$ on $Q/G$ (by means of $\omega$) are invariant vector fields on $Q$ that span $\mathcal{D}$. We have
$$
[X_a, V_i]=0,\quad[X_a, X_b]=B_{a b}^j V_j,\quad[V_i, V_j]=-C_{i j}^k V_k,
$$
where $B_{a b}^j$ can be interpreted as components of the curvature of the connection $\omega$ and $C_{i j}^k$ as the structure constants of the Lie algebra.

From now on, we use the notation $G_{ab} = g(X_a,X_b)$, $G_{ai} = g(X_a,V_i)$ and $G_{ij} = g(V_i,V_j)$ for the components of the kinetic energy metric $g$ with respect to the frame $\{X_a, V_i\}$. We can also construct an orthogonal basis $\{X_a, {X}_i\}$, by setting
\[
X_i=V_i- {G}^{ab}{G}_{ai}X_b=V_i+ {K}^{b}_{i}X_b,\qquad \mbox{with $K^b_i:=- {G}^{ab}{G}_{ai}$}.
\] 
For this basis $\{X_a,X_i\}$, we have $g_{ab} = g(X_a,X_b)=G_{ab}$ and  $g_{aj} =g(X_a,X_j)=0$ (and $g_{ij} = g({X}_i,{X}_j)$). For the bracket relations in this frame, one may easily verify that  
$$
R^j_{ab}=B_{ab}^j, \qquad R_{ab}^c=-K_j ^c R_{ab}^j, \qquad\mbox{and}\qquad  R_{a i}^j= {K}^{b}_{i} B_{a b}^j. 
$$
From the middle, we may derive a very useful identity:
\begin{equation}\label{useful}
 g_{ce}R^c_{ab}=G_{ej}R^j_{ab}.
 \end{equation}

In  many papers (see e.g. \cite{Bloch,Cantrijn,Garcia}) it has been shown that the nonholonomic equations (\ref{nonholeqmotion}) of a Chaplygin system can be reduced to a set of second-order ordinary differential equations on the quotient manifold $Q/G$. We can easily re-establish this here. When the kinetic energy metric $g$ is $G$-invariant, it defines a reduced metric $g^{red}$ on $Q/G$, by
\begin{equation} \label{gred}
g^{red}\left(\frac{\partial}{\partial q^a},\frac{\partial}{\partial q^b}\right) := g(X_a,X_b)=g_{ab}.
\end{equation}
In \cite{our}, we have used the Koszul formula for an expression of  $\Gamma^c_{ab}$ (components along the orthogonal frame $\{X_a,X_i\}$):
$$
g_{cd}\Gamma^d_{ab}v^av^b = \left(X_a(g_{bc}) - \frac12X_c(g_{ab})  - g_{db}R^d_{ac}\right)v^av^b.
$$
After symmetry reduction (with $v^a={\dot q}^a$), the first two terms in the right hand side can be associated to the Christoffel symbols $\Gamma^{c,red}_{ab}$ of the reduced metric $g^{red}$ with respect to the standard basis $\displaystyle\left\{ \frac{\partial}{\partial q^a}\right\}$ of vector fields on $Q/G$.  Since the equations (\ref{nonholeqmotion}) are equivalent with
$$
g_{cd}{\dot v}^d = -g_{cd}\Gamma^d_{ab}v^a v^b,
$$
the reduced equations become
$$
g_{cd}\left({\ddot q}^d +\Gamma^{d,red}_{ab}{\dot q}^a{\dot q}^b\right)=  g_{db}R^d_{ac}{\dot q}^a{\dot q}^b.
$$
These are the equations that determine the integral curves of the so-called {\em reduced SODE $\redvf$ on $Q/G$},
$$
\redvf = {\dot q}^e \frac{\partial}{\partial q^e} +  \left( -\Gamma^{e, red}_{ab}       + g^{ce}g_{db}R^d_{ac}  \right)   {\dot q}^a {\dot q}^b \frac{\partial}{\partial {\dot q}^e}.
$$
 We recognize in them the coordinate-independent description that appears in e.g.\ \cite{Cantrijn}: $\redvf$  
 is determined by the symplectic-type equation
 \begin{equation}\label{symplectic}
 \iota_{\redvf}\omega_l-d E_l=\alphabar,
 \end{equation}
where here $l=\frac{1}{2}g_{ab}{\dot q}^a{\dot q}^b$ is the reduced Lagrangian, $\theta_l = g_{ab}{\dot q}^a dq^b$, $E_l=l$, $\omega_l=-d\theta_l$, and where the 1-form $\alphabar$ is called the {\em gyroscopic 1-form}. In local coordinates the semi-basic form $\alphabar$ is given by 
$$
\alphabar =   -g_{db}R^d_{ac}{\dot q}^a{\dot q}^b dq^c  = -
R_{ac}^j\dot{q}^a \left(V_j^V(L)\right)_{\mathcal D} dq^c,
$$
after recalling the identity (\ref{useful}) and that $\left(V_j^V(L)\right)_{\mathcal D}$ stands for $G_{bj}v^b$ (when we set $G_{jb} = G_{bj}$).

\section{Projective geodesic extensions} \label{sec:pge}

\subsection{The two principal conditions}

Let $(L=\frac{1}{2}g,\mathcal{D})$ be a purely kinetic nonholonomic system ({\em not} necessarily of Chaplygin type, but as in Section~\ref{prelimpk}). In \cite{our} we have introduced the concept of a {\em geodesic extension} as a tool by which we can interpret the nonholonomic trajectories as part of the geodesics of a Riemannian metric $\hat{g}$. We have often limited ourselves to the search of a  {\em $\mathcal{D}$-preserving modification}: by this, we mean that  the coefficients $\hat{g}_{ab}=\hat{g}(X_a,X_b)$ of (part of) the sought-for $\hat g$ are equal to those $g_{ab}=g(X_a,X_b)$ of the given kinetic energy metric $g$. It turned out that we could characterize a geodesic extension, solely in terms of conditions on the coefficents $\hat{g}_{ai}=\hat{g}(X_a,X_i)$, and that no further restrictions were required for the coefficients $\hat{g}_{ij}=\hat{g}(X_i,X_j)$: $\hat{g}_{ai}$ represents a $\mathcal D$-preserving geodesic extension when $\theta_k:=\hat{g}_{ak}v^a$ satisfies
\begin{eqnarray*} 
(A) && \theta_k R^k_{ac} v^a =0,\\
  (B) &&  \Gammanh(\theta_i)+\theta_k R^k_{ia}v^a +\lambda_i=0.
	\end{eqnarray*}
In this paper  we  allow for a larger class of geodesic extensions.



A   {\em quadratic spray} (also often called {\em affine spray}) is the SODE that can be associated to a linear connection. The geodesic spray of a Riemannian metric is, in fact, the quadratic spray of its Levi-Civita connection. A quadratic spray is a special case of the more general concept of a spray (see, for example, \cite{Szilasi} for a general reference on the geometry of sprays). Two (quadratic) sprays $\Gamma$ and $\hat\Gamma$ are said to be {\em projectively equivalent} if they have the same base integral curves as point sets. This means that for any base integral curve of $\hat\Gamma$ there exists an orientation preserving reparametrization that transform it into a base integral curve of $\Gamma$. It is shown in e.g.\ \cite{Shen, Szilasi} that two sprays are projectively related if, and only if, there exists a (positively) 1-homogeneous function $P$  such that
$$
\Gamma = \hat\Gamma +P \Delta.
$$
Here, $\Delta$ stands for the Liouville vector field. When $\Gamma$ is a geodesic spray, the base integral curves of $\hat\Gamma$ are called {\em pregeodesics} of $\Gamma$.

We now use a frame $\{X_\alpha\}$, with quasi-velocities $v^\alpha$. When the sprays are both quadratic
$$  
\Gamma = v^\alpha X_\alpha^C-\Gamma^\alpha_{\beta\gamma}v^\beta v^\gamma X_\alpha^V,
\qquad \hat\Gamma = v^\alpha X_\alpha^C-{\hat\Gamma}^\alpha_{\beta\gamma}v^\beta v^\gamma X_\alpha^V, $$
the function $P$ is a linear function, $P=P_\beta v^\beta$ (see e.g.\ \cite{EM,Mikes}). With  $\Delta = v^\alpha X_\alpha$, the defining relation between projectively equivalent quadratic sprays becomes
\[
\Gamma^\alpha_{\beta\gamma} v^\beta v^\gamma= {\hat\Gamma}^\alpha_{\beta\gamma} v^\beta v^\gamma - P_\beta v^\beta v^\alpha.
\]
Remark that many of our formulae contain factors $v^av^b$. The reason is that in the anholonomic frames $\{X_\alpha\}$ that we use, the Christoffel symbols $\Gamma^\alpha_{\beta\gamma}$ (among other) are not necessarily symmetric in $\beta$ and $\gamma$. In this way, we avoid very lengthy formula's.

In \cite{our} we have called a Riemannian metric $\hat g$  a {\em geodesic extension} of a purely kinetic nonholonomic system $(L,\mathcal D)$ if its  geodesic  spray $\Gamma_{\hat g}$ is related to the nonholonomic vector field $\Gammanh$ by  
$\Gammanh=\left(\Gamma_{\hat{g}}\right)|_{\mathcal D}$. As we have already discussed before, this means that we can interpret the nonholonomic trajectories as those geodesics of the Riemannian metric $\hat g$ whose initial tangent vector lies in the nonholonomic distribution $\mathcal D$. In the light of the previous notions, the following generalization of that definition is now natural:

\begin{defi}
A projective geodesic extension of a purely kinetic nonholonomic system $(L,\mathcal{D})$ is a Riemannian metric $\hat g$ on $Q$ together with a linear function $P\in\mathcal{C}^{\infty}(TQ)$ such that the nonholonomic trajectories are  geodesics of $\hat g$ after a reparametrization (associated to $P$).      
\end{defi}

In practice, we need to look for a couple $(\hat{g}, P)$ such that 
$$
\Gammanh=\left(\Gamma^{\hat{g}}+P\Delta\right )|_{\mathcal{D}}, 
$$
where $\Delta =v^i X_i ^V+v^a X_a ^V$, or also,
\begin{equation} \label{one}
    \begin{cases}
       \Gamma_{bc}^a v^b v^c=\hat{\Gamma}^a_{bc}v^b v^c- P_b v^bv^a,\\
       0=\hat{\Gamma}^i_{bc} v^b v^c.
    \end{cases}
\end{equation}
The last equation says that, on the constraint manifold $v^i=0$, we have $\Gamma_{\hat g}(v^i) = 0$. Notice that when $P|_{\mathcal D}=0$, we obtain a geodesic extension.

We have chosen to use projective classes of sprays to represent reparametrizations of geodesics. This is very common in Riemannian geometry. In the `mechanics' context of Hamiltonization of possibly non-purely-kinetic-Lagrangians, one often resorts to the use of related vector fields to express reparametrizations, and this is also the approach in e.g.\ \cite{Simoes,Garcia}. At the end of the paper, in Proposition~\ref{prop:last}, we will relate both approaches explicitly.

We will now make one further assumption. This assumption is, again, a generalization to the current context of that that we had also made in \cite{our}. There, we have limited the class of the candidate metrics to those that have some special relation to the kinetic energy $g$ of the nonholonomic system: we have called  a symmetric $(0,2)$-tensor field $\overline{g}$  a {\em $\mathcal{D}$-preserving modification} of  $g$, when
  \[
{\overline g}|_{\mathcal{D} \times \mathcal{D}} = g|_{\mathcal{D} \times \mathcal{D}},
\] 
or: ${\bar g}_{ab} = g_{ab}$. Here, we will consider those metrics $\hat g$ whose restriction to the constraints is related to $g$ by a conformal change, in the sense that 
$$
{\hat g}_{ab} =e^{2\nphi} g_{ab},
$$
for some $\nphi\in\mathcal{C}^\infty(Q)$. Another interpretation of the above is that the constrained Lagrangian  remains preserved up to a conformal transformation, ${\hat L}|_{\mathcal D}= e^{2\nphi}L|_{\mathcal D}$. Since we make no further assumptions on the other components of the metric, we may always rewrite  $\hat{g}=e^{2\nphi}\overline{g}$, for some other symmetric (0,2) tensor field $\hat g$ on $Q$. This leads to the following definition:

\begin{defi}
    A symmetric (0,2) tensor field $\hat g$ on $Q$ is a $\mathcal{D}$-conformal modification of a Riemannian metric $g$ on $Q$ if $\hat{g}$ is given by $e^{2\nphi}\overline{g}$ for a function $\nphi\in\mathcal{C}^{\infty}(Q)$ and for a  symmetric $(0,2)$-tensorfield  $\overline{g}$ which is a $\mathcal{D}$-preserving modification of $g$.
\end{defi}
The definition translates to the following form for the components:
\begin{equation}
    \begin{cases}
        \hat{g}_{ab}=e^{2\nphi} g_{ab},\\
        \hat{g}_{ai}=e^{2\nphi} \overline{g}_{ai},\\
        \hat{g}_{ij}=e^{2\nphi}\overline{g}_{ij}.
    \end{cases}\nonumber
\end{equation}
The case $\nphi=0$ (or constant) corresponds to that of a $\mathcal{D}$-preserving modification. 

The above assumption has some consequence for the kind of functions $P$ we can consider for our change of parametrization. It is well-known that linear nonholonomic systems with a mechanical (here:  purely kinetic) Lagrangian have conserved energy $E_L=\frac12 g_{\alpha \beta}v^\alpha v^\beta=L$, i.e.\ $\Gamma^{nh}(E_{L})=0$. Besides, also $\Gamma_{{\hat g}}(E_{\hat g})=0$. If we take into account that both $\Gamma^{nh}(v^i)=0$ and $\Gamma_{\hat g}(v^i)=0$  (when $v^i=0$), we get from $\Gammanh=\left(\Gamma_{\hat{g}}+P\Delta\right )|_{\mathcal{D}}$ that $P$ has the property 
\begin{equation}\label{Pform}
P|_{\mathcal D}=\Gamma_{(L,\mathcal D)}(\nphi) = X_d(\nphi)v^d,
\end{equation}
for the same $\nphi$.

In conclusion, instead of looking for a couple $(\hat{g},P)$ to obtain a projective geodesic extension, we will look for a function $\nphi$ and a Riemannian metric $\overline{g}$  such that the set of equations (\ref{one}) is satisfied. It will turn out (in Proposition~\ref{prop1}) that these conditions are independent of the choice of the components $\overline{g}_{ij}$. For this reason, we will also simply call  the couple $({\overline g}_{ai},\nphi)$ a projective geodesic extension (even though, technically, they provide through the freedom in $\overline{g}_{ij}$ a whole class of geodesic extensions).

In, for example, \cite{KN} one may find that, after a conformal change $\hat{g}=e^{2\nphi}\overline{g}$, the Levi-Civita connection of $\hat g$ can be given in terms of the Levi-Civita connection of $\overline{g}$  by the expression
$$ 
\hat{\nabla}_{X} Y=\overline{\nabla}_X Y + X(\nphi) Y+Y(\nphi) X-\overline{g}(X,Y)\overline{\rm{Grad}}\,\nphi.
$$
If $\overline{g}^{\alpha\beta}$ is the inverse of $\overline{g}_{\alpha\beta}=\overline{g}(X_{\alpha},X_{\beta})$, then    $\overline{\rm{Grad}}\,\nphi=\overline{g}^{\alpha\beta} X_{\alpha}(\nphi)X_\beta$. From this, we get the relevant Christoffel symbols with respect to the frame $\{X_\alpha\}$ of the  Levi-Civita connection of $\hat g$ in terms of those of $ \overline{g}$:
\begin{eqnarray*}
    \hat{\Gamma}^a_{bc} &=&\overline{\Gamma}^a_{bc}  + X_c(\nphi)\delta^a_b+X_b(\nphi)\delta^a_c-\overline{g}_{bc}\overline{g}^{\alpha a}X_{\alpha}(\nphi), 
    \\
    \hat{\Gamma}^i_{bc} &=& \overline{\Gamma}^i_{bc} -g_{bc}\overline{g}^{\alpha i} X_{\alpha}(\nphi).
\end{eqnarray*}

This information, together with the special form (\ref{Pform}) of $P$, can now be pasted into equations (\ref{one}). We obtain: 
\begin{equation} \label{two}
    \begin{cases}
      \Gamma^{a}_{bc}v^b v^c=\overline{\Gamma}_{bc}^a v^b v^c + 2X_c(\nphi)v^c v^a-{g}_{bc}\overline{g}^{\alpha a} X_{\alpha}(\nphi)v^b v^c,\\
      0= \overline{\Gamma}^i_{bc}v^b v^c-g_{bc}\overline{g}^{\alpha i} X_{\alpha}(\nphi) v^b v^c.
    \end{cases}
\end{equation}

The goal of the next Lemma  is to replace the conditions $(A)$ and $(B)$ (for a geodesic extension) to more general conditions $(A')$  and $(B')$ (for a projective geodesic extension), by including the extra terms that are related to the conformal change and the reparametrization.

\begin{lem} \label{lem:ApBp}
Let $g$ be a Riemannian metric and let  $\hat{g}$ be a pseudo-Riemannian metric which is a $\mathcal{D}$-conformal  modification of $g$,  $\hat{g}=e^{2\nphi}\overline{g}$. Assume that ${\overline g}|_{\mathcal{D}^{\overline g} \times \mathcal{D}^{\overline g}}$ is non-degenerate,  where $\mathcal{D}^{\overline{g}}$ stands for the orthogonal of $\mathcal{D}$ w.r.t. $\overline{g}$. The conditions (\ref{two}) are equivalent with the conditions 
\begin{eqnarray*} 
(A') &&  g_{bd}\left(\delta_a^d X_c(\nphi)-\delta_c^d X_a(\nphi)\right)v^a v^b+\theta_{k} R^k_{ac}v^a=0,\\
  (B') &&  \Gammanh(\theta_i)+\lambda_i+\theta_{k}(R^k_{i a}+\delta_i^k X_a(\nphi))v^a-g_{ab} X_i(\nphi)v^a v^b =0,
	\end{eqnarray*}
    where $\theta_i=\overline{g}_{ai}v^a$ and $\lambda_i$ are the Lagrange multipliers (\ref{lambda}). \label{equivalentie}
\end{lem}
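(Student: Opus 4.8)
The plan is to work directly with the two equations in (\ref{two}) and reorganize them into the stated form, handling each separately. The main tool is the Koszul-formula expression for the Christoffel symbols quoted in Section~\ref{prelimpk}, namely $g_{cd}\Gamma^d_{ab}v^av^b = \left(X_a(g_{bc}) - \tfrac12 X_c(g_{ab}) - g_{db}R^d_{ac}\right)v^av^b$, applied both to $g$ (with connection coefficients $\Gamma$) and to $\overline g$ (with coefficients $\overline\Gamma$). Since $\overline g_{ab}=g_{ab}$ (as $\overline g$ is a $\mathcal D$-preserving modification), the ``interior'' pieces $X_a(g_{bc})-\tfrac12 X_c(g_{ab})$ agree, and subtracting the two Koszul expressions contracted against a suitable co-frame element will leave only curvature/$\theta$-type terms. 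I expect the $\theta_k := \overline g_{ak}v^a$ to enter precisely because lowering an index in the first equation of (\ref{two}) with $g$ (which kills all the $\overline g_{ij}$ and $\overline g_{ai}$ dependence on the left, but not on the right through $\overline\Gamma^a_{bc}$ and through $\overline g^{\alpha a}X_\alpha(\nphi)$) forces the appearance of off-diagonal components of $\overline g$.

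First I would treat equation $(A')$, which should come from the first equation of (\ref{two}). The strategy is: contract that equation with $g_{ed}$ (i.e.\ lower the free index $a$ using $g$, not $\overline g$), so that the conformal term $2X_c(\nphi)v^cv^a$ becomes $2X_c(\nphi)v^c g_{ea}v^a$ and the term $g_{bc}\overline g^{\alpha a}X_\alpha(\nphi)v^bv^c$ becomes $g_{bc}v^bv^c \cdot g_{ea}\overline g^{\alpha a}X_\alpha(\nphi)$. The point is that $g_{ea}\overline g^{\alpha a}$ need not be $\delta^\alpha_e$ because $g$ and $\overline g$ differ off the distribution, but on the \emph{distribution indices} one can exploit $\overline g_{ab}=g_{ab}$ together with the block structure; working out $g_{ea}\overline g^{ba}$ and $g_{ea}\overline g^{ia}$ in terms of $\overline g_{ai}$ is a short linear-algebra computation. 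The difference $g_{ed}\Gamma^d_{bc}v^bv^c - g_{ed}\overline\Gamma^d_{bc}v^bv^c$ reduces, via the two Koszul formulas, to a curvature term that rearranges into $\theta_k R^k_{ac}v^a$ plus the antisymmetrized-gradient term $g_{bd}(\delta^d_a X_c(\nphi)-\delta^d_c X_a(\nphi))v^av^b$. I would keep careful track of which terms are symmetric vs.\ antisymmetric in the contracted indices, since the whole identity $(A')$ is really an identity among quadratic forms in $v$, and the $\delta^d_a X_c(\nphi)-\delta^d_c X_a(\nphi)$ structure signals that only the antisymmetric part of the conformal contribution survives after symmetrization.

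Next, for $(B')$, the source is the second equation of (\ref{two}), $0=\overline\Gamma^i_{bc}v^bv^c - g_{bc}\overline g^{\alpha i}X_\alpha(\nphi)v^bv^c$. Here I would recall from \cite{our} (the $(B)$ condition) that $\overline\Gamma^i_{bc}v^bv^c$, when $\overline g_{ab}=g_{ab}$, expands to something of the form $\Gamma^{nh}(\theta_i)+\lambda_i+\theta_k R^k_{ia}v^a$ up to the appropriate contractions — essentially this is the content of the $\mathcal D$-preserving case, which I may cite. Then the only genuinely new ingredient is the conformal correction $-g_{bc}\overline g^{\alpha i}X_\alpha(\nphi)v^bv^c$; splitting the sum over $\alpha$ into the $a$-part and the $i$-part and using again the block form of $\overline g^{\alpha i}$ against $g_{bc}v^bv^c$ yields the terms $\theta_k\delta^k_i X_a(\nphi)v^a$ and $-g_{ab}X_i(\nphi)v^av^b$ in the claimed expression.

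The hardest part, I expect, is bookkeeping of the inverse metric: controlling $\overline g^{\alpha a}$ and $\overline g^{\alpha i}$ (with $\alpha$ ranging over both block types) in terms of $\overline g_{ai}$ and $\overline g_{ij}$, and confirming that all dependence on $\overline g_{ij}$ cancels — as it must, since the statement (and Proposition~\ref{prop1}) asserts the conditions are independent of that choice. A clean way to sidestep heavy inversion formulas is to note that equations (\ref{two}) are themselves the statement $\Gamma^{nh}=(\Gamma_{\hat g}+P\Delta)|_{\mathcal D}$ in disguise, and to instead pair the \emph{original} relation (\ref{one}) with the one-forms $g_{ak}dq^a$ and $\overline g_{ik}$-type co-vectors from the start, so that the inverse metric never appears and one only ever meets $\theta_k=\overline g_{ak}v^a$; I would adopt whichever of these two routes produces the shorter argument, most likely the latter, deriving $(A')$ by feeding (\ref{one}) into the Koszul comparison and $(B')$ from the $i$-component together with the $\mathcal D$-preserving case of \cite{our}.
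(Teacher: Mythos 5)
Your overall strategy --- comparing the Koszul expressions for $g$ and $\overline g$, and contracting the defining equations (\ref{two}) against covectors built from $\overline g$ so that only $\theta_k=\overline g_{ak}v^a$ survives --- is the same as the paper's, and the ``latter route'' you sketch in your last paragraph is essentially its proof. But as primarily described, your plan has a structural flaw: $(A')$ does \emph{not} come from the first equation of (\ref{two}) alone, nor $(B')$ from the second alone. The Koszul formula for $\overline g$ in the frame $\{X_a,X_i\}$ unavoidably couples $\overline\Gamma^d_{ab}$ and $\overline\Gamma^k_{ab}$: what it computes is $\bigl(\overline g_{cd}\overline\Gamma^d_{ab}+\overline g_{ck}\overline\Gamma^k_{ab}\bigr)v^av^b$ for $(A')$ and $\bigl(\overline g_{di}\overline\Gamma^d_{ab}+\overline g_{ki}\overline\Gamma^k_{ab}\bigr)v^av^b$ for $(B')$, so each condition is obtained by substituting \emph{both} equations of (\ref{two}) into one of these combinations. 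This coupling is also the only mechanism by which the inverse metric disappears: contracting the first equation with $g_{ed}$ alone leaves $g_{ea}\overline g^{\alpha a}=\delta^\alpha_e-\overline g_{ek}\overline g^{\alpha k}$, which still contains $\overline g^{\alpha k}$ and hence the unwanted $\overline g_{ij}$; only the full contraction $\overline g_{ea}\overline g^{\alpha a}+\overline g_{ek}\overline g^{\alpha k}=\delta^\alpha_e$ (using $\overline g_{ea}=g_{ea}$, with the second equation of (\ref{two}) supplying the $\overline\Gamma^k_{ab}$ term) collapses everything. So the ``short linear-algebra computation'' you defer is not available in the form you describe; you must commit to the combined contraction from the outset. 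A related bookkeeping caution: the antisymmetric structure $\delta^d_aX_c(\nphi)-\delta^d_cX_a(\nphi)$ in $(A')$ is not produced by symmetrization, as you suggest, but by the reparametrization term $-P_bv^bv^a$ (with $P_bv^b=X_d(\nphi)v^d$) cancelling exactly one of the two symmetric conformal contributions $X_c(\nphi)\delta^a_b+X_b(\nphi)\delta^a_c$; if you track that coefficient carelessly you will end up with a spurious factor of $2$ on the $g_{cb}X_a(\nphi)$ term.

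The second, and more serious, gap is that the lemma asserts an \emph{equivalence} and your proposal only argues one direction, from (\ref{two}) to $(A')$ and $(B')$. The converse is precisely where the hypothesis that $\overline g|_{\mathcal D^{\overline g}\times\mathcal D^{\overline g}}$ is non-degenerate enters (one must solve the two contracted Koszul identities back for $\overline\Gamma^a_{bc}v^bv^c$ and $\overline\Gamma^i_{bc}v^bv^c$, which requires inverting the relevant block of $\overline g$). The paper disposes of this direction by adapting Lemma~2 of \cite{our}; your proposal never mentions it, nor the non-degeneracy assumption, so a complete write-up along your lines would prove only half the statement.
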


\begin{proof}
   We start the proof by writing down the Koszul formula for the Christoffel symbols in the orthogonal frame $\{X_a,X_i\}$. Recall first that, for the metric $g$, we have   $g_{ai}=0$. This leads to  
$$
\left\{\begin{array}{l}
2g_{c d} \Gamma_{a b}^dv^a v^b =(2X_a\left(g_{b c}\right)-X_c\left(g_{a b}\right)-2g_{d b} R_{a c}^d) v^av^b,\\
2g_{ki} \Gamma_{a b}^kv^a v^b =(-X_i\left(g_{ab}\right)-2g_{d b} R_{a i}^d) v^av^b.
\end{array}\right.
$$
Likewise,  for  the metric $\overline{g}$, we get
$$
\left\{\begin{array}{l}
2(\overline{g}_{c d} \overline{\Gamma}_{a b}^d+\overline{g}_{ck}\overline{\Gamma}^k_{ab})v^a v^b =(2X_a\left(\overline{g}_{b c}\right) -X_c\left(\overline{g}_{a b}\right)-2\overline{g}_{d b} R_{a c}^d 
-2\overline{g}_{bk}R^k_{ac}) v^av^b,\\
2(\overline{g}_{di} \overline{\Gamma}_{a b}^d+\overline{g}_{ki}\overline{\Gamma}^k_{ab})v^a v^b =(2X_a\left(\overline{g}_{b i}\right) -X_i\left(\overline{g}_{a b}\right)-2\overline{g}_{d b} R_{a i}^d 
-2\overline{g}_{bk}R^k_{ai}) v^av^b.
\end{array}\right.$$
Since $\overline{g}_{ab}=g_{ab}$,  we can derive the following interrelations between the respective Christoffel symbols: 
\begin{enumerate}
    \item[(i)] $\left(g_{cd}\overline{\Gamma}^d _{ab}+ \overline{g}_{ck} \overline{\Gamma}_{ab} ^k  \right)v^a v^b=\left(g_{cd}{\Gamma}^d _{ab}- \overline{g}_{bk} R_{ac} ^k  \right)v^a v^b$,
    \item[(ii)] $\left(\overline{g}_{di} \overline{\Gamma}^d_{ab}+\overline{g}_{ki}\overline{\Gamma}_{ab} ^k\right)v^a v^b=\left( X_a (\overline{g}_{bi})+g_{ki}\Gamma_{ab} ^ k -\overline{g}_{bk} R^k _{ai}\right)v^a v^b$.
\end{enumerate}
These two identities always hold. We will now apply them in our proof of the equivalence of the conditions. 

Suppose first that conditions (\ref{two}) hold. If we plug (\ref{two}) in the first expression (i), we get 
\begin{eqnarray*}
0&=&    -g_{cd} X_a(\nphi) v^a v^d +g_{cd} g_{ba} \overline{g}^{\alpha d} X_{\alpha}(\nphi)v^a v^b + \overline{g}_{ck} g_{ab} \overline{g}^{\alpha k} X_{\alpha}(\nphi) v^a v^b+\overline{g}_{bk} R^k_{ac}v^a v^b.
\end{eqnarray*}

Given that $\overline{g}^{\alpha d} g_{cd}+\overline{g}^{\alpha k}g_{ck}=\delta^\alpha_c$, we get
\begin{eqnarray*} 0&=&
   \left( -g_{cb} X_a(\nphi) +\delta^{\alpha}_c X_{\alpha}(\nphi)g_{ab}+\overline{g}_{bk} R^k_{ac}\right)v^a v^b=\left[ g_{bd}\left(-\delta_c^d X_a(\nphi)+\delta_a^d X_c(\nphi)\right)+\overline{g}_{bk} R^k_{ac}\right]v^a v^b\\
&=&  g_{bd}\left(-\delta_c^d X_a(\nphi)+\delta_a^d X_c(\nphi)\right)v^a v^b+\theta_{k} R^k_{ac}v^a.
\end{eqnarray*}
This is indeed condition $(A')$ for $\nphi$ and $\theta_k$.

If we insert (\ref{two}) in (ii), we get 
\begin{eqnarray*}
    0&=&-\overline{g}_{di}\Gamma_{ab}^d v^a v^b +g_{ki}\Gamma_{ab}^k v^a v^b+X_a(\overline{g}_{bi})v^a v^b-\overline{g}_{bk}R^k_{ai}v^a v^b \\&&\qquad\qquad+X_a(\nphi)\overline{g}_{di}v^a v^d-X_{\alpha}(\nphi)\overline{g}_{di}\overline{g}_{ba}\overline{g}^{\alpha d} v^b v^a -X_{\alpha}(\nphi)\overline{g}_{ki} g_{ab}\overline{g}^{\alpha k}v^a v^b\\
&=& -\overline{g}_{di}\Gamma_{ab}^d v^a v^b +g_{ki}\Gamma_{ab}^k v^a v^b+X_a(\overline{g}_{bi})v^a v^b-\overline{g}_{bk}R^k_{ai}v^a v^b
     +\overline{g}_{bi}X_a(\nphi)v^a v^d -g_{ba}X_i(\nphi)v^b v^a\\
&=& -\overline{g}_{di}\Gamma_{ab}^d v^a v^b +g_{ki}\Gamma_{ab}^k v^a v^b+X_a(\overline{g}_{bi})v^a v^b-\overline{g}_{bk}R^k_{ai}v^a v^b
     +\overline{g}_{b\alpha}\left(X_a(\nphi)\delta^\alpha_i -X_i(\nphi)\delta^\alpha_a\right)v^b v^a\\&=&
  \Gammanh(\theta_i)+\lambda_i+\theta_{k}R^k_{i a} v^a
     +\overline{g}_{b\alpha}\left(X_a(\nphi)\delta^\alpha_i -X_i(\nphi)\delta^\alpha_a\right)v^b v^a\\
&=& \Gammanh(\theta_i)+\lambda_i+\theta_{k}(R^k_{i a}+\delta_i^k X_a(\nphi))v^a-g_{ab} X_i(\nphi)v^a v^b .
\end{eqnarray*}
This is condition $(B')$. In the last steps we have used the expression (\ref{lambda}) for $\lambda_i$ and the fact that $\Gammanh(\theta_i) = (X_a(\overline{g}_{bi})-\overline{g}_{di}\Gamma_{ab}^d)v^a v^b$.

The proof of the other direction is, up to some technicalities, very similar to the one given in Lemma~2 of \cite{our}. In this direction,  we make use of the assumption that $ \overline{g}_{|_{\mathcal{D}^{\overline{g}}\times\mathcal{D}^{\overline{g}} }}$ is non-degenerate. To avoid too much repetition, we ommit this part of the proof here.
\end{proof}

The next proposition completes the desired equivalence.  
\begin{prop} \label{prop1}
 Consider the equations $(A')$ and $(B')$ from the previous Lemma.
\begin{enumerate} 
\item If $\hat g$ is a Riemannian metric that is a $\mathcal{D}$-conformal modification of $g$ by $\nphi$ and $(\hat{g},P)$ is a projective geodesic extension of $(L,\mathcal{D})$, with $P$ as in (\ref{Pform}), then the restriction ${\overline g}|_{\mathcal{D}\times\mathcal{D}^g}=({\overline g}_{ai})$ and the function $\nphi\in\mathcal{C}^{\infty}(Q)$ satisfy the conditions $(A')$ and  $(B')$.

\item  If a pseudo-Riemannian metric $\overline g$ that is a $\mathcal{D}$-preserving modification of $g$ is such that its restriction ${\overline g}|_{\mathcal{D}\times\mathcal{D}^g}=({\overline g}_{ai})$, together with a  function $\nphi\in\mathcal{C}^{\infty}(Q)$, satisfy the conditions $(A')$ and $(B')$, then there exists a projective geodesic extension of $(L,\mathcal{D})$ by $\nphi$ and by a Riemannian metric ${\hat g}$.

\end{enumerate}
      
\end{prop}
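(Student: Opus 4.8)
The plan is to deduce Proposition~\ref{prop1} from Lemma~\ref{lem:ApBp}, by running, in both directions, the following chain of reformulations. A pair $(\hat{g},P)$ --- with $\hat{g}$ a $\mathcal{D}$-conformal modification of $g$ by $\nphi$ and $P$ of the form (\ref{Pform}) --- is a projective geodesic extension of $(L,\mathcal{D})$ if and only if the system (\ref{one}) holds; by substituting the conformal Christoffel symbols $\hat{\Gamma}^a_{bc}$, $\hat{\Gamma}^i_{bc}$ and $P|_{\mathcal{D}}=X_d(\nphi)v^d$, this is in turn equivalent to the system (\ref{two}); and, by Lemma~\ref{lem:ApBp}, the system (\ref{two}) is equivalent to the conditions $(A')$ and $(B')$ for $\theta_i=\overline{g}_{ai}v^a$ and $\nphi$. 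The first equivalence is just the unpacking of the definition that precedes (\ref{one}); the second is the computation already carried out in the text; the third is the Lemma. The only subtleties are the non-degeneracy hypothesis of that Lemma and the observation that $(A')$ and $(B')$ do not involve the block $\overline{g}_{ij}$ at all --- the latter will supply the freedom needed in part (2).

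For part (1), I would start from a projective geodesic extension $(\hat{g},P)$, where, by the definition of a $\mathcal{D}$-conformal modification, $\hat{g}=e^{2\nphi}\overline{g}$ for a Riemannian $\mathcal{D}$-preserving modification $\overline{g}$ of $g$, and where $P$ is as in (\ref{Pform}). Then $\Gammanh=(\Gamma^{\hat{g}}+P\Delta)|_{\mathcal{D}}$ is precisely the system (\ref{one}); substituting the conformal Christoffel symbols and $P|_{\mathcal{D}}=X_d(\nphi)v^d$ turns it into (\ref{two}). Since $\hat{g}$ is Riemannian, so is $\overline{g}=e^{-2\nphi}\hat{g}$; in particular $\overline{g}|_{\mathcal{D}^{\overline{g}}\times\mathcal{D}^{\overline{g}}}$ is non-degenerate, so Lemma~\ref{lem:ApBp} applies and yields that $(\overline{g}_{ai})$ and $\nphi$ satisfy $(A')$ and $(B')$. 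This is exactly the claim of part (1).

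For part (2), I would first exploit the freedom in the components that $(A')$ and $(B')$ leave unconstrained. These conditions involve $\overline{g}$ only through $\theta_i=\overline{g}_{ai}v^a$ and through $\overline{g}_{ab}=g_{ab}$, so the block $\overline{g}_{ij}$ may be chosen at will. I would choose it so that the symmetric matrix $(\overline{g}_{\alpha\beta})$ is positive definite: since its $\mathcal{D}\times\mathcal{D}$ block equals $g_{ab}$, which is already positive definite, by the block-matrix (Schur complement) criterion it suffices to take $\overline{g}_{ij}$ with positive-definite Schur complement $\overline{g}_{ij}-g^{ab}\overline{g}_{ai}\overline{g}_{bj}$, for instance $\overline{g}_{ij}:=g^{ab}\overline{g}_{ai}\overline{g}_{bj}+g_{ij}$. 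With such a choice $\overline{g}$ is a Riemannian metric, $\hat{g}:=e^{2\nphi}\overline{g}$ is a Riemannian metric that is a $\mathcal{D}$-conformal modification of $g$ by $\nphi$, and $\overline{g}|_{\mathcal{D}^{\overline{g}}\times\mathcal{D}^{\overline{g}}}$ is non-degenerate. Since $(A')$ and $(B')$ hold by hypothesis, Lemma~\ref{lem:ApBp} gives the system (\ref{two}); reversing the substitution of the conformal Christoffel symbols, and choosing any linear function $P\in\mathcal{C}^\infty(TQ)$ whose restriction to $\mathcal{D}$ equals $X_a(\nphi)v^a$ (the remaining coefficients being immaterial, say zero), brings us back to (\ref{one}), i.e.\ $\Gammanh=(\Gamma^{\hat{g}}+P\Delta)|_{\mathcal{D}}$. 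Hence $(\hat{g},P)$ is a projective geodesic extension of $(L,\mathcal{D})$ by $\nphi$.

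I expect the positive-definite completion in part (2) to be the only step that is not pure bookkeeping: one must observe that, having prescribed the positive-definite $\mathcal{D}\times\mathcal{D}$ block $g_{ab}$ and an arbitrary $\mathcal{D}\times\mathcal{D}^g$ block $\overline{g}_{ai}$, there is still enough room in $\overline{g}_{ij}$ to make $\overline{g}$ a genuine Riemannian metric, which is precisely the standard Schur-complement argument. Beyond that I would only double-check that the passage (\ref{one}) $\Leftrightarrow$ (\ref{two}) is genuinely reversible --- it is, since the conformal Christoffel formulas express $\hat{\Gamma}$ explicitly in terms of $\overline{\Gamma}$ and $\nphi$ --- and that the function $P$ reconstructed in part (2) is a legitimate linear function on $TQ$ with the prescribed restriction to $\mathcal{D}$. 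Finally, as the text anticipates just before the statement, since $(A')$ and $(B')$ never see $\overline{g}_{ij}$, the pair $(\overline{g}_{ai},\nphi)$ in fact encodes a whole family of projective geodesic extensions, one for each admissible $\overline{g}_{ij}$; this is what justifies calling the pair itself a projective geodesic extension.
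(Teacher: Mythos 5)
Your proposal is correct and follows essentially the same route as the paper: part (1) is an immediate application of Lemma~\ref{lem:ApBp} (the non-degeneracy hypothesis being automatic for a Riemannian $\overline{g}=e^{-2\nphi}\hat{g}$), and part (2) reduces to completing $(\overline{g}_{ij})$ so that the full matrix is positive definite. The only difference is that you spell out this completion via an explicit Schur-complement choice $\overline{g}_{ij}=g^{ab}\overline{g}_{ai}\overline{g}_{bj}+g_{ij}$, whereas the paper omits it by reference to Proposition~4 of \cite{our}; your construction is a valid instance of that argument.
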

\begin{proof}
  The first result follows immediately from Lemma~\ref{equivalentie}, since for a Riemannian metric $\hat g$, also $\overline g$ is Riemannian, and its restriction to any distribution is non-degenerate. 
  
  For the other direction, we need to show that we can choose the coefficients $\overline{g}_{ij}$ in such a way that the whole matrix  $$\hat{g}=e^{2\nphi}\begin{pmatrix}
     g_{ab}&  \overline{g}_{ai}\\
    \overline{g}_{ia}& \overline{g}_{ij}
    \end{pmatrix}
    $$
    is positive-definite. Since the function $e^{2\nphi}$ is always positive, it remains to show that we can choose $\overline{g}_{ij}$ in such a way that the matrix $\begin{pmatrix}
     g_{ab}&  \overline{g}_{ai}\\
    \overline{g}_{ia}& \overline{g}_{ij}
    \end{pmatrix}$ is always positive-definite. We ommit this proof here, because it can be done in a similar fashion as in Proposition~4 of \cite{our}.
\end{proof}

From the above propositions, we may conclude that the search for a conformal geodesic extension via a $\mathcal D$-conformal modification boils down to finding a solution $(\overline{g}_{ai},\nphi)$ of $(A')$ and $(B')$. It is important to realize that, through the further choice of an appropriate $(\overline{g}_{ij})$, one such solution $(\overline{g}_{ai},\nphi)$ may even lead to several distinct conformal geodesic extensions $\hat g$.

We describe two examples below. We will use them as running examples throughout the paper,  to demonstrate different aspects of our results.

\subsection{The generalized  nonholonomic particle} \label{firstmodpar}

We consider on $\mathbb{R}^3$ the nonholonomic system with a Lagrangian function of the form $$L=\frac{1}{2}g(v,v)=\frac{1}{2}(\Dot{x}^2+\dot{y}^2+\dot{z}^2)$$ and with a constraint of the type  
$$ 
\dot{z}=\rho(x,y,z)\dot{x},
$$
where $\rho(x,y,z)$ depends, at least, on $y$ (if not,  the constraint would not be genuinely nonholonomic). We call this example the {\em generalized  nonholonomic particle} because it encompasses the many versions of the so-called nonholonomic particle that can be found in the literature (with, for example,  $\rho(x,y,z)=y$ or $\rho(x,y,z)=xy$). Moreover, it is, up to some physical constants, also a model  for a knife edge on a horizontal plane (with $\rho(x,y,z) = \tan(y)$) and for a Chaplygin sleigh (but, in that last case we need to generalize the Lagrangian).


Given the constraint, we can construct a basis for the distribution $\mathcal{D}$, as well as a basis for its orthogonal complement $\mathcal{D}^g$ with respect to $g$:
$$
\mathcal{D}=\textrm{span}\left\{X_x:=\frac{\partial}{\partial x}+\rho\frac{\partial}{\partial z},X_y:=\frac{\partial}{\partial y}\right\}\quad\textrm{and}\quad\mathcal{D}^g=\textrm{span}\left\{X_z:=\frac{1}{1+\rho^2}\left[\frac{\partial}{\partial z}-\rho\frac{\partial}{\partial x}\right]\right\}.
$$ 
With respect to this frame the metric coefficients $g_{ab}$ are \[
(g_{ab}) = \begin{pmatrix} g_{xx} & g_{xy} \\ g_{yx} & g_{yy} \end{pmatrix}  = \begin{pmatrix} 1+\rho^2     & 0 \\ 0 & 1    \end{pmatrix}
\]
and the corresponding quasi-velocities are, on the constraint $\dot z = \rho \dot x$, 
$$
v_x=\frac{\dot{x} + \rho\dot{z}}{1 + \rho^2} = \dot{x}\qquad\textrm{and}\qquad v_y=\dot{y} .
$$
In our case we have indices $i=z$ and $a=x,y$, and therefore we have 
$$
\theta_z={\overline g}_{xz} v_x + {\overline g}_{yz} v_y.
$$
Since in conditions $(A')$ and $(B')$ only the bracket coefficients $R_{a\gamma}^k$ occur, we only compute $R_{xy}^z, R_{xz}^z$ and $R_{yz}^z$. These bracket coefficients are here: 
$$
R_{zx}^z=\frac{\partial \rho}{\partial z},\quad R_{zy}^z=\frac{\rho \frac{\partial\rho}{\partial y}}{1+\rho^2},\quad R_{xy}^z=-\frac{\partial \rho}{\partial y},\quad R_{yz}^z=\frac{3\rho\frac{\partial \rho}{\partial y}}{(1 + \rho^2)^2}.
$$
With this, we have all the ingredients to construct the conditions $(A')$ and $(B')$. First,
\begin{equation} \label{Amodpar}
    (A')\quad \Leftrightarrow \quad \left[ -{\overline g}_{xz}\frac{\partial\rho}{\partial y} + (1 + \rho^2)\frac{\partial \nphi}{\partial y}  \right]\dot{x}^2+0\,\dot{y}^2 -\left[{\overline g}_{yz}\frac{\partial \rho}{\partial y}+ \frac{\partial \nphi}{\partial x}+\rho\frac{\partial \nphi}{\partial z}\right]\dot{x} \dot{y}=0
\end{equation}
Since both coefficients need to vanish, we can derive an expression for ${\overline g}_{xz}$ and ${\overline g}_{yz}$ in terms of (the derivatives of) $\rho$ and $\nphi$:
$$
{\overline g}_{xz}= \frac{\frac{\partial \nphi}{\partial y}(1 + \rho^2)}{\frac{\partial\rho}{\partial y}} ,\quad
    {\overline g}_{yz}= -\frac{\frac{\partial \nphi}{\partial x}+ \rho\frac{\partial\nphi}{\partial z}}{\frac{\partial\rho}{\partial y}} .$$

The  substitution of these two expressions in $(B')$ results in a lengthy expression. To get a grip on it, we make an ansatz for both $\rho(x,y,z)$ and $\nphi(x,y,z)$:  we suppose that they only depend on $y$. Then, we obtain that $(B')$ can be written as  
$$
  0\,\dot{x}^2+0\,\dot{y}^2 +\left[\frac{(1 + \rho^2)^2\rho'\nphi''+ \left( - (1 + \rho^2)^2 \rho'' +  (\rho')^3 + 2\rho  (1 + \rho^2)(\rho')^2\right)\nphi'+ (1 + \rho^2)^2\rho'(\nphi')^2}{(\rho')^2(1 + \rho^2)} \right]\dot{x}\dot{y}=0.
$$




The remaining coefficient in $\dot x\dot y$ can be interpreted as an ordinary differential equation in $\nphi(y)$. After solving it, we get 
$$ 
\nphi(y)=\frac{1}{2}\ln \left(\frac{(\rho(y)a_2 - a_1)^2}{1 + \rho(y)^2}\right),
$$
where $a_1,a_2$ can be any constants (not zero together). Its corresponding ${\overline g}_{ai}$ is 
$$
{\overline g}_{xz}=\frac{a_1\rho(y)  + a_2}{
a_2\rho - a_1} =-\rho +  a_2\frac{1+\rho^2}{
a_2\rho - a_1}   ,\qquad {\overline g}_{yz}=0.
$$
We will come back to the interpretation of this class of projective geodesic extensions in Section~\ref{secondmodpar} and Section~\ref{thirdmodpar}.

\subsection{The two-wheeled carriage} \label{first2wheeled}

The two-wheeled carriage (see e.g.\ \cite{anhol,LM}) is also an example that fits within our  methodology. We will use here the same notations as in \cite{our}.

\begin{center}
\begin{tabular}{lr}
\begin{minipage}{7cm}
\hspace{-0.7cm}\includegraphics[scale=0.8]{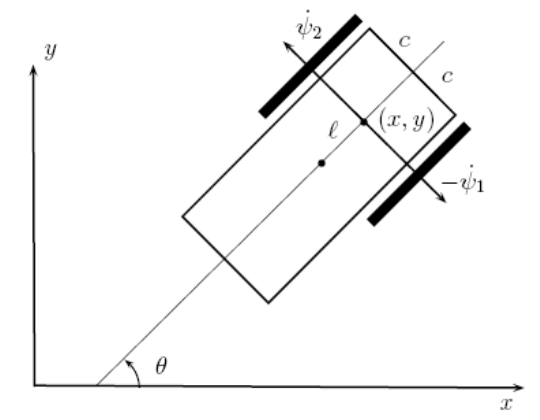}
\end{minipage}
& 
\begin{minipage}{8cm}
The angles  $\psi_1$ and $\psi_2$ describe the rotation of the two wheels (with radius $R$) of the carriage, respectively. We represent  the position of the intersection point of the horizontal symmetry axis of the carriage and the vertical axis that connects the two wheels by the coordinates $(x,y)$. Finally, the angle $\theta$ is the angle between the symmetry axis and the $x$-axis. There are also some relevant parameters that determine the shape of the carriage: $c$ is half of the distance between the two wheels and $\ell$ is  the  distance between the centre of mass  and  the  intersection point $(x,y)$. We will think of  $\ell$ as a parameter that we can control when manufacturing the carriage. 
\end{minipage}
\end{tabular}
\end{center}
The configuration space $Q$ is here $S^1 \times S^1 \times
SE(2)$, with Lagrangian function 
$$
L= \frac{1}{2} m ({\dot{x}}^2 + {\dot{y}}^2) +
m_0 \ell {\dot{\theta}}((\cos\theta) \dot{y} - (\sin\theta) \dot{x})
+\frac{1}{2} J {\dot{\theta}}^2 + \frac{1}{2} J_2 ({\dot\psi}_1^2 + {\dot\psi}_2^2 ),
$$
where $J_2$ and $J$ are the moments of inertia for the wheels and for the whole system, respectively, $m_0$ is the mass of the body and $m$ is the mass of the whole system. The two wheels of the carriage are assumed to roll without slipping and this can be described by the following three constraints:
$$
\dot{x} = -\frac{R}{2} \cos\theta (\dot{\psi_1} + \dot{\psi_2}),\quad
\dot{y} = -\frac{R}{2} \sin\theta (\dot{\psi_1} + \dot{\psi_2}), \quad
\dot{\theta} = \frac{R}{2c}  (\dot{\psi}_2 - \dot{\psi}_1).
$$
The vector fields \begin{align*}
X_{\psi_1} &= \frac{\partial }{\partial \psi_1} -
\frac{R}{2}\left(\cos\theta \frac{\partial}{\partial x}+\sin\theta\frac{\partial}{\partial y}+
\frac{1}{c}\frac{\partial}{\partial \theta}\right), \\
X_{\psi_2} &= \frac{\partial}{\partial \psi_2} -
\frac{R}{2}\left(\cos\theta \frac{\partial}{\partial x}+\sin\theta\frac{\partial}{\partial y} -
\frac{1}{c}\frac{\partial}{\partial \theta}\right).
\end{align*}
form a frame $\{X_{\psi_1}, X_{\psi_2}\}$ for the distribution $\mathcal{D}$. 


Recall that a geodesic extension  is actually a special case of a projective geodesic extension, with  $\nphi=0$. In \cite{our}, we had found that a (projective) geodesic extension $(\overline{g}_{ai},\nphi)=(\hat{g}_{ai},0)$ exists for this example, with
\begin{equation} \label{gbarone}
\hat{g}_{x{\psi_1}}=\hat{g}_{x{\psi_2}}=\left[ \frac{mR}{2}+\frac{K\ell m_0}{P+Q}\right]\cos\theta,\qquad \hat{g}_{y{\psi_1}}=\hat{g}_{y{\psi_2}}=\left[ \frac{mR}{2}+\frac{K\ell m_0}{P+Q}\right]\sin\theta 
\end{equation}
(with $P= J_2+\displaystyle\frac{R^2m}{4}+\frac{R^2J}{4c}$ and $Q=-\displaystyle\frac{R^2 m}{4}+\frac{R^2 J}{4c}$), but only if $\ell$ has one of the following two specific values:
$$ 
\ell=0\qquad \textrm{or} \qquad \ell=\ell_1:=\frac{\sqrt{(R^2m+2J_2)(JR^2+2J_2c^2)}}{m_0 R^2}.
$$
For other values of $\ell$, there is no geodesic extension (i.e.\ no projective geodesic extension with $\nphi=0$).

\section{Projective geodesic extensions for Chaplygin systems} \label{chap}

\subsection{The formulation on the configuration manifold}

From here on forward, we specialize to the case of  Chaplygin nonholonomic systems with a purely kinetic Lagrangian, as in Section~\ref{prelimChap}. In comparison with the previous sections, we will make one further assumption: we will assume that the sought-for conformal factor $\nphi$ is a $G$-invariant function on $Q$. As such, it can be identified with a function $\nvarphi$ on $Q/G$, related to $\nphi$ by $\nphi=\nvarphi\circ\pi$. The infinitesimal characterization of this property is that $V_i(\nphi)=0$.

We leave the condition $(A')$ aside, for now. 
 By using the inherent symmetry of a Chaplygin system, we are able to simplify the condition $(B')$.
\begin{prop}
When $(\overline{g}_{ai},\nphi=\nvarphi\circ\pi)$
satisfies $(A')$ for a Chaplygin system, the condition $(B')$  becomes 
$$\Gammanh(C_i)+\Gammanh(\nphi)C_i=0,$$
where $C_i:=C_{ai}v^a:=(\overline{g}_{ai}+G_{ai})v^a$.\label{CondB}
\end{prop}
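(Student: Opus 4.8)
The plan is to start from condition $(B')$ as given in Lemma~\ref{lem:ApBp} and rewrite every term using the Chaplygin structure from Section~\ref{prelimChap}, in particular the bracket identities $R^j_{ab}=B^j_{ab}$, $R^c_{ab}=-K^c_jR^j_{ab}$, $R^j_{ai}=K^b_iB^j_{ab}$, the useful identity $g_{ce}R^c_{ab}=G_{ej}R^j_{ab}$ from~(\ref{useful}), and the $G$-invariance $V_i(\nphi)=0$ (so $X_i(\nphi)=V_i(\nphi)+K^b_iX_b(\nphi)=K^b_iX_b(\nphi)$). First I would substitute $\theta_i=\overline{g}_{ai}v^a$ and $\lambda_i=g_{ki}\Gamma^k_{ab}v^av^b$ into $(B')$. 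The key observation is that $C_i=(\overline{g}_{ai}+G_{ai})v^a=\theta_i+G_{ai}v^a$, so I expect the proof to amount to showing that $\Gammanh(G_{ai}v^a)$ together with the $\lambda_i$-term and the curvature terms collapse into $-\Gammanh(\theta_i)$ plus the conformal terms, leaving exactly $\Gammanh(\theta_i+G_{ai}v^a)+\Gammanh(\nphi)(\theta_i+G_{ai}v^a)=0$, i.e.\ $\Gammanh(C_i)+\Gammanh(\nphi)C_i=0$.

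Concretely, the main computational step is to evaluate $\Gammanh(G_{ai}v^a)$ along $\Gammanh=v^bX_b^C-\Gamma^a_{bc}v^bv^cX_a^V$, giving $\Gammanh(G_{ai}v^a)=X_b(G_{ai})v^av^b-G_{di}\Gamma^d_{bc}v^bv^c$, and to compare this with the Koszul-type expression for $\lambda_i$. Using $g_{ki}\Gamma^k_{ab}=\frac12(-X_i(g_{ab})-2g_{db}R^d_{ai}-2g_{bk}R^k_{ai})$ — adapted from the Koszul formulae in the proof of Lemma~\ref{lem:ApBp} applied to the metric $g$ itself, keeping in mind that for $g$ one has $g_{ai}=0$ but $G_{ai}\neq0$, so care is needed: the cleanest route is to express everything through $G_{ai}$, $G_{ab}=g_{ab}$, $B^j_{ab}$, and the Koszul formula for $g$ written in the non-orthogonal frame $\{X_a,V_i\}$. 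Then I would use the curvature terms $\theta_k R^k_{ia}v^a$ from $(B')$, rewrite $R^k_{ia}=-R^k_{ai}=-K^b_iB^k_{ab}$, and fold them together with the $X_i(g_{ab})$-type terms coming from $\lambda_i$ and from $X_b(G_{ai})$; these should recombine (via (\ref{useful}) and the definition $K^b_i=-G^{ab}G_{ai}$) into the single expression $\Gammanh(G_{ai}v^a)$ up to the conformal correction.

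The conformal part should be more routine: the remaining terms in $(B')$ are $\theta_k\delta^k_iX_a(\nphi)v^a-g_{ab}X_i(\nphi)v^av^b=\theta_i\Gamma_{(L,\mathcal D)}(\nphi)-g_{ab}K^b_iX_c(\nphi)v^av^c$. The first is exactly $\Gammanh(\nphi)\theta_i$. For the second, using $g_{ab}K^b_i=G_{ab}K^b_i=-G_{ai}$ (from $K^b_i=-G^{ab}G_{ai}$ and $G_{ab}=g_{ab}$), it equals $+G_{ai}X_c(\nphi)v^av^c=\Gammanh(\nphi)G_{ai}v^a$. Hence the two conformal terms assemble into $\Gammanh(\nphi)(\theta_i+G_{ai}v^a)=\Gammanh(\nphi)C_i$, which is precisely the second term in the claimed identity. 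Combining with the previous paragraph gives $(B')\Leftrightarrow\Gammanh(C_i)+\Gammanh(\nphi)C_i=0$.

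The step I expect to be the main obstacle is the bookkeeping in the non-orthogonal frame $\{X_a,V_i\}$ versus the orthogonal frame $\{X_a,X_i\}$: condition $(B')$ is phrased with $X_i$ and with $g_{ki}$, $R^k_{ai}$ relative to $\{X_a,X_i\}$, whereas $G_{ai}$ and $B^j_{ab}$ live naturally in $\{X_a,V_i\}$, and the translation between them (via $X_i=V_i+K^b_iX_b$ and the bracket dictionary $R^c_{ab}=-K^c_jR^j_{ab}$, $R^j_{ai}=K^b_iB^j_{ab}$) produces several cross terms in $K^b_i$ and its $X_a$-derivatives that must cancel. The identity (\ref{useful}) and the fact that $\Gammanh$ acts on the scalars $C_i$, $\nphi$ (so that the $K^b_i$-terms reorganize into derivatives of $G_{ai}v^a$) are what make this cancellation work; I would verify it by expanding $\Gammanh(C_i)=\Gammanh(\theta_i)+\Gammanh(G_{ai}v^a)$ and matching term by term against the expanded $(B')$. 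Since only the final equivalence is claimed, it suffices to carry out this matching once in one direction and note the reverse is immediate.
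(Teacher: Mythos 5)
Your overall skeleton is right: the paper's proof also splits $(B')$ into $\Gammanh(\theta_i)+\lambda_i=\Gammanh(C_i)$ (using the Chaplygin identity $\lambda_i=\Gammanh(G_{ai}v^a)$ from \cite{our}) plus a remainder that must equal $C_i\,\Gammanh(\nphi)$. But your treatment of the remainder has a genuine gap, and it hides the one ingredient the proposition actually hypothesizes: condition $(A')$ is never used in your argument, whereas it is indispensable.

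Two concrete problems. First, since $\lambda_i=\Gammanh(G_{ai}v^a)$ holds on the nose, the curvature term $\theta_kR^k_{ia}v^a$ cannot be ``folded into'' the computation of $\Gammanh(G_{ai}v^a)$ as you propose; it must be accounted for in the conformal remainder. Second, your rewriting $-g_{ab}X_i(\nphi)v^av^b=-g_{ab}K^b_iX_c(\nphi)v^av^c$ contracts the dummy index $b$ of $g_{ab}v^av^b$ with the upper index of $K^b_i$, which is not legitimate: the correct expression is $-\bigl(g_{ab}v^av^b\bigr)\,K^d_iX_d(\nphi)=-2L\,K^d_iX_d(\nphi)$, a multiple of the kinetic energy, and this is \emph{not} equal to $G_{ai}v^a\,X_c(\nphi)v^c$ in general. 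The correct mechanism is: write $\theta_kR^k_{ia}v^a=-K^d_i\,\overline{g}_{bk}R^k_{ad}v^av^b$ using $R^k_{ia}=K^d_iR^k_{da}$, then invoke $(A')$ in the form $\overline{g}_{bk}R^k_{ad}v^av^b=\bigl(g_{bd}X_a(\nphi)-g_{ba}X_d(\nphi)\bigr)v^av^b$; the resulting $+g_{ab}v^av^b\,K^d_iX_d(\nphi)$ exactly cancels your problematic term $-g_{ab}v^av^b\,X_i(\nphi)$, and what survives is $-K^d_ig_{bd}X_a(\nphi)v^av^b=G_{bi}X_a(\nphi)v^av^b$, which together with $\theta_iX_a(\nphi)v^a$ gives $C_i\,\Gammanh(\nphi)$. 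Without invoking $(A')$ the claimed collapse simply fails, so the proof as planned would not go through.
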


\begin{proof}
We have shown in \cite{our} that the Lagrangian multipliers, in the case of a Chaplygin system, are of the form $$\lambda_i=\Gammanh\left(G_{ai} v^a\right).
$$
Therefore, the first two terms in condition $(B')$ may be rewritten as 
$$
\Gammanh(\theta_i)+\lambda_i=\Gammanh(\theta_i+G_{ai}v^a)=\Gammanh(\overline{g}_{ai}v^a+G_{ai}v^a) = \Gammanh(C_i).
$$


 The last terms in condition $(B')$ can also be simplified:    \begin{eqnarray*} 
&&  \theta_{k}R^k_{i a} v^a
     +\overline{g}_{b\alpha}\left(X_a(\nphi)\delta^\alpha_i -X_i(\nphi)\delta^\alpha_a\right)v^b v^a\\
    &=& \overline{g}_{bk}R^k_{ia}v^av^b-\overline{g}_{ba}X_i(\nphi)v^av^b+\overline{g}_{bi}X_a(\nphi)v^av^b\\&  =& -\overline{g}_{bk}K_i^dR^k_{ad}v^av^b-\overline{g}_{ba}K_i^d X_d(\nphi)v^av^b+\overline{g}_{bi}X_a(\nphi)v^av^b\\
        &=& -K_i^d(\overline{g}_{bk} R^k_{ad}+\overline{g}_{ba}X_d(\nphi))v^av^b+\overline{g}_{bi}X_a(\nphi)v^av^b= -K_i^d(g_{bd}X_a(\nphi))v^a v^b+\overline{g}_{bi}X_a(\nphi)v^av^b\\
        &=& (G_{bi} +\overline{g}_{bi})X_a(\nphi)v^av^b=  C_{bi} X_a(\nphi)v^av^b = C_i X_a(\nphi)v^a = C_i \Gammanh(\nphi).
    \end{eqnarray*}
In the second step we have used that $R_{ia}^k=K_i^dR^k_{da}$ and that  $ X_i(\nphi)=V_i(\nphi)+K_i^b X_b(\nphi)=K_i^b X_b(\nphi)$. In the fourth step we have applied the condition $(A')$. In the final step, the factor $X_a(\nphi)v^a$ has been rewritten as $\Gammanh(\nphi)$.

When we take both parts together we obtain the desired expression.
\end{proof}

For later use, we state the following easy property: 
\begin{lem} \label{lem2}
    The metric $\overline{g}$ satisfies $\overline{g}|_{V\pi\times \mathcal{D}}=0$ if, and only if, $\overline{g}_{ai}=-G_{ai}$.
    \end{lem}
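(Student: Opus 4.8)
The plan is to unwind both sides of the claimed equivalence using the change-of-frame relation $X_i = V_i + K_i^b X_b$ together with the definitions $G_{ai} = g(X_a, V_i)$ and $\overline{g}_{ai} = \overline{g}(X_a, X_i)$, recalling that $\overline{g}$ is a $\mathcal{D}$-preserving modification of $g$, so $\overline{g}_{ab} = g_{ab} = G_{ab}$.

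First I would compute $\overline{g}(X_a, V_i)$ in terms of the quantities we actually carry around. Writing $V_i = X_i - K_i^b X_b$, bilinearity gives
$$
\overline{g}(X_a, V_i) = \overline{g}(X_a, X_i) - K_i^b \,\overline{g}(X_a, X_b) = \overline{g}_{ai} - K_i^b g_{ab} = \overline{g}_{ai} - K_i^b G_{ab}.
$$
On the other hand, $K_i^b = -G^{bc}G_{ci}$, so $K_i^b G_{ab} = -G^{bc}G_{ci}G_{ab} = -\delta_a^c G_{ci} = -G_{ai}$. Substituting, $\overline{g}(X_a, V_i) = \overline{g}_{ai} + G_{ai}$, which is exactly the coefficient $C_{ai}$ appearing in Proposition~\ref{CondB}.

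Now the equivalence is immediate. The condition $\overline{g}|_{V\pi \times \mathcal{D}} = 0$ says precisely that $\overline{g}(V_i, X_a) = 0$ for all $i, a$, since $\{V_i\}$ spans ${\rm Sec}(V\pi)$ and $\{X_a\}$ spans ${\rm Sec}(\mathcal{D})$ and $\overline{g}$ is tensorial (so vanishing on the frame is equivalent to vanishing on all sections). By the computation above this holds if and only if $\overline{g}_{ai} + G_{ai} = 0$, i.e. $\overline{g}_{ai} = -G_{ai}$, for every pair of indices — which is the stated conclusion. There is essentially no obstacle here; the only thing to be careful about is the index bookkeeping in the contraction $K_i^b G_{ab} = -G_{ai}$ and the remark that $\overline{g}$ restricted to a frame spanning a distribution vanishes iff it vanishes on the whole distribution, which follows from $\mathcal{C}^\infty(Q)$-bilinearity of a tensor field.
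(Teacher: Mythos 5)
Your proposal is correct and follows essentially the same route as the paper: expand $V_i = X_i - K_i^b X_b$, use $\overline{g}_{ab}=g_{ab}=G_{ab}$ and $K_i^b=-G^{cb}G_{ci}$ to obtain $\overline{g}(X_a,V_i)=\overline{g}_{ai}+G_{ai}$, and read off the equivalence. Your added remarks on tensoriality and the identification with $C_{ai}$ are harmless elaborations of what the paper leaves implicit.
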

  \begin{proof}
The assumption $\overline{g}|_{V\pi\times \mathcal{D}}=0$ means that for all $V_i$ and for all $X_a$ we have $\overline{g}(X_a,V_i)=0$, and therefore, indeed,
\begin{eqnarray*}
0&=&\overline{g}(X_a,V_i) =\overline{g}(X_a, X_i-K_i^b X_b)=\overline{g}_{ai}+G^{cb}G_{ci}{g}_{ab}= \overline{g}_{ai}+G_{ai}.  
\end{eqnarray*}
\end{proof}

    \begin{cor} \label{corollary}
    If the couple $(\overline{g}_{ai}=-G_{ai},\nphi=\nvarphi\circ\pi)$ satisfies condition $(A')$, then condition $(B')$ will also be satisfied (i.e.\ the couple $(\overline{g}_{ai}=-G_{ai},\nphi=\nvarphi\circ\pi)$ becomes a projective geodesic extension).
\end{cor}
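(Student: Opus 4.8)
The plan is to combine Proposition~\ref{CondB} with Lemma~\ref{lem2}. By Proposition~\ref{CondB}, whenever a couple $(\overline{g}_{ai},\nphi=\nvarphi\circ\pi)$ satisfies $(A')$ for a Chaplygin system, the condition $(B')$ is equivalent to $\Gammanh(C_i)+\Gammanh(\nphi)C_i=0$, with $C_i=C_{ai}v^a=(\overline{g}_{ai}+G_{ai})v^a$. So the strategy is simply to evaluate the left-hand side of this reformulated $(B')$ at the specific choice $\overline{g}_{ai}=-G_{ai}$.

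The key step is the observation that, by Lemma~\ref{lem2}, the choice $\overline{g}_{ai}=-G_{ai}$ is exactly the condition $C_{ai}=\overline{g}_{ai}+G_{ai}=0$, and hence $C_i=C_{ai}v^a=0$ identically. Plugging $C_i=0$ into the reformulated condition gives $\Gammanh(0)+\Gammanh(\nphi)\cdot 0 = 0$, which holds trivially. Therefore $(B')$ is automatically satisfied, and by Proposition~\ref{prop1} (part 2), the couple $(\overline{g}_{ai}=-G_{ai},\nphi=\nvarphi\circ\pi)$ -- once completed with an appropriate positive-definite choice of $\overline{g}_{ij}$ -- yields a projective geodesic extension.

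There is essentially no obstacle here; the corollary is an immediate consequence of the two preceding results, and the only thing to be careful about is invoking Proposition~\ref{CondB} correctly (it requires that $(A')$ already holds, which is exactly the hypothesis of the corollary) and noting that $\nphi$ is assumed $G$-invariant so that Proposition~\ref{CondB} applies. The one conceptual remark worth making is that this is precisely the $\ort$-orthogonal situation: $\overline{g}_{ai}=-G_{ai}$ means $\overline{g}|_{V\pi\times\mathcal{D}}=0$, so the corollary says that in the $\ort$-orthogonal case condition $(B')$ is redundant, which is the interpretation announced in the introduction.
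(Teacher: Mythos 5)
Your proof is correct and follows exactly the paper's own argument: apply Proposition~\ref{CondB} to rewrite $(B')$ as $\Gammanh(C_i)+\Gammanh(\nphi)C_i=0$ and observe that $\overline{g}_{ai}=-G_{ai}$ forces $C_i=0$, making the condition trivially satisfied. The additional remarks on Lemma~\ref{lem2} and Proposition~\ref{prop1} are accurate context but not needed for the argument itself.
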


\begin{proof}
When  $\overline{g}_{ai}=-G_{ai}$, also $C_i=0$.
From Proposition~\ref{CondB}, we conclude that  both terms in condition $(B')$ will vanish. 
\end{proof}



In the next sections, we will extensively discuss the case that is described in  Corollary~\ref{corollary}. We will denote the condition $(A')$ by $(A')^G$, after the substitution $\overline{g}_{ai}=-G_{ai}$. Then, $\nvarphi$ is the only unknown of $(A')^G$.
We also introduce some new terminology.

\begin{defi} A projective geodesic extension of a Chaplygin system via a $\mathcal D$-conformal modification $({\bar g}_{ai},\nphi=\nvarphi\circ\pi)$ is $\ort$-orthogonal when $\overline{g}|_{V\pi\times \mathcal{D}}=0$ (or: $\overline{g}_{ai}=-G_{ai}$, or: $C_i=0$). 
\end{defi}

Corollary~\ref{corollary} then says that $(A')^G$ is sufficient for obtaining a $\ort$-orthogonal conformal geodesic extension. It is, however, also important to realize that this is not the only choice, and that it is often possible to find solutions for condition $(B')$ where $C_i$ is not necessarily zero. 

One possible method to proceed, is as follows. First, we look for at least one non-zero solution $\psi=\psi_av^a$ of the  differential equation $\Gammanh(\psi)+\Gammanh(\nphi)\psi=0$. Then, we  can take one $C_k=\psi$ (or: ${g}_{ak}=-G_{ai}+\psi_a$) and all the other $C_i=0$, for $i\neq k$. 

The function $\psi\neq 0$ needs to satisfy:
$$
\frac{\Gammanh(\psi)}{\psi}+\Gammanh(\nphi)=0\qquad \Leftrightarrow \qquad  \Gammanh\left(\ln{|\psi|}+\nphi\right)=0.
$$
This means that we can relate the solutions $\psi$ to  first integrals $\mu$ of the vector field $\Gammanh$ by
$$
\ln{|\psi|}+\nphi=\mu,
$$
from which  $|\psi|=e^{\mu- \nphi}=e^{-\nphi} e^\mu$. 

Since $\psi$ needs to be linear in $v^a$, the factor $e^\mu$ needs to be linear in $v^a$ as well. Therefore, we are looking for first integrals $\mu$ that are of the form $\mu=\ln(\nu_a v^a)$. It is easy to see that $\mu$ is a first integral of $ \Gammanh$ if and only if  $\nu_a v^a$ is one, that is: $ \Gammanh(\nu_a v^a)=0$.


In conclusion:  we are looking for a (positive) first integral of  $\Gammanh$,  of the form $\nu_a v^a$, and we use any $\psi$ with  
$$ 
|\psi|=e^{-\nphi} \nu_a v^a.
$$

In the next paragraph, we give an example where we specialize the method to the case of one nonholonomic constraint. Be aware, however, that if we can find an $\psi$ in the above way, we still need to check  whether we can obtain a projective geodesic extension $(\overline{g}_{a1} = -G_{a1} + e^{-\nphi}  \nu_a,\nphi)$ by imposing that the couple also satisfies $(A')$.





\subsection{The generalized  nonholonomic particle}\label{secondmodpar}

 When the function $\rho(x,y,z)$ of the generalized  nonholonomic particle does not depend on the coordinate $z$, it is in fact a Chaplygin system, and we can illustrate our method with it. The action of its symmetry Lie group $G=\mathbb{R}$ is given by the translations on $z$. The reduced space $Q/G$ is $\mathbb{R}^2$ with coordinates $(x,y)$. 

If we enforce the symmetry on all involved objects, the functions $\rho$, $\nphi=\nvarphi$, $\overline{g}_{xz}$ and $\overline{g}_{yz}$ may only depend on the coordinates $(x,y)$. We show first that we can find linear first integrals of the type $\nu_a v^a=\nu_x(x,y)\dot{x}+\nu_y(x,y)\dot{y}$. 

The nonholonomic equations of motion are given by 
$$\begin{cases}
    \dot{z}=\rho\dot{x},\\
    \Ddot{x}=\displaystyle\frac{-\rho\dot{\rho}}{1+\rho^2}\dot{x},\\
    \Ddot{y}=0,
\end{cases}
$$
where a notation like $\dot\rho$ is short for $\displaystyle \frac{\partial \rho}{\partial x}\dot x + \displaystyle \frac{\partial \rho}{\partial y}\dot y$. The condition $\Gammanh( \nu_x\dot{x}+\nu_y\dot{y})=0$ becomes here 
$$ 
\dot{\nu}_x\dot{x}+\dot{\nu}_y\dot{y}-\frac{\nu_x\rho\dot{\rho}}{1+\rho^2}\dot{x}=0.
$$
In view of  the independence between $\dot{x}^2,\dot{x}\dot{y}$ and $\dot{y}^2$, we deduce that 
$$\begin{cases} \displaystyle
    \frac{\partial \nu_x}{\partial x}-\displaystyle\frac{\nu_x\rho}{1+\rho^2}\frac{\partial \rho}{\partial x}=0,
    \\[2mm]
    \displaystyle \frac{\partial \nu_x}{\partial y}
 +\frac{\partial \nu_y}{\partial x}-\displaystyle\frac{\nu_x \rho}{1+\rho^2}\frac{\partial \rho}{\partial y}=0,\\[2mm]
   \displaystyle \frac{\partial \nu_y}{\partial y}=0.
\end{cases}
$$
This implies that $\nu_x(x,y)=\kappa_1(y)\sqrt{1+\rho^2(x,y)}$ and $\nu_y(x,y)=\kappa_2(x)$, where the functions $\kappa_1$  and $\kappa_2$ are related by $\displaystyle\frac{\partial\kappa_1}{\partial y}=\frac{-1}{\sqrt{1+\rho^2}} \frac{\partial\kappa_2}{\partial x}$. In particular, we may consider the special case where $\kappa_1$ and $\kappa_2$  are both  constants. 

Since we have only one constraint here, we try to find a projective geodesic extension with 
$$ 
\overline{g}_{xz}=e^{-\nphi} \nu_x-G_{xz}=\kappa_1e^{-\nphi}\sqrt{1+\rho^2}-\rho\quad\quad\textrm{and}\quad\quad \overline{g}_{yz}=e^{-\nphi} \nu_y-G_{yz}=\kappa_2e^{-\nphi}.
$$

Recall the expressions (\ref{Amodpar}) for condition $(A')$. We can also rewrite them as  PDEs in the unknown $\nphi$:
$$ \begin{cases}
    \displaystyle\frac{\partial\nphi}{\partial x}=- \overline{g}_{yz}\frac{\partial \rho}{\partial y},\\[2mm]
    \displaystyle\frac{\partial \nphi}{\partial y}=\frac{1}{1+\rho^2}\overline{g}_{xz}\frac{\partial \rho}{\partial y}.
\end{cases}
\qquad \Leftrightarrow \qquad
 \begin{cases}
    \displaystyle\frac{\partial\nphi}{\partial x}=-\kappa_2 e^{-\nphi}\frac{\partial \rho}{\partial y},\\[2mm]
    \displaystyle\frac{\partial \nphi}{\partial y}=\frac{1}{1+\rho^2}(\kappa_1e^{-\nphi}\sqrt{1+\rho^2}-\rho)\frac{\partial \rho}{\partial y}.
\end{cases}
$$


We consider two easy cases. First, if we take $\kappa_2=0$, then we get from the first equation that $\nphi$ (and therefore also $\rho$)  may only depend on $y$, and the second equation gives  
\begin{equation} \label{sig1}
\nphi(y)=\frac{1}{2}\ln{\left(\frac{(\kappa_3+\kappa_1\rho)^2}{1+\rho^2}\right)},
\end{equation}
for some constant $\kappa_3$ (where $\kappa_1$ and $\kappa_3$ are assumed not both to be zero). In fact, this is the same projective geodesic extension  $(\overline{g}_{ai}, \nphi)$ that we had already encountered in Section~\ref{firstmodpar}, under the identification $\kappa_1=a_2$ and $\kappa_3=-a_1$.

Second, when $\kappa_2\neq 0$ (and if we set now, for convenience, $\kappa_1=0$), the equations for $\nphi$ become 
\begin{equation} \label{sigmaPDE}
\begin{cases}
    \displaystyle\frac{\partial\nphi}{\partial x}=-\kappa_2 e^{-\nphi}\frac{\partial \rho}{\partial y},\\[2mm]
    \displaystyle\frac{\partial \nphi}{\partial y}=-\frac{\rho}{1+\rho^2}\frac{\partial \rho}{\partial y}.
\end{cases}
\end{equation}
 This set of PDEs has a solution when 
$$
\frac{\partial}{\partial y}\left(\kappa_2 e^{-\nphi}\frac{\partial \rho}{\partial y}\right)=\frac{\partial}{\partial x}\left(\frac{\rho}{1+\rho^2}\frac{\partial \rho}{\partial y}\right).
$$
If we  keep the assumption that $\rho$ only depends on $y$, we can interpret this as a SODE in $\rho(y)$,
$$
\frac{\partial\nphi}{\partial y}\rho'- \rho''=0 \quad\Rightarrow\quad  \rho''=\frac{-\rho(\rho')^2}{1+\rho^2}. 
$$
This SODE has a solution whenever we specify $\rho(0)$ and $\rho'(0)$. Even though this solution has no expression in terms of elementary functions, we can still use such a $\rho$ (and its derivative) to write down the corresponding $\nphi$. From the  first equation in (\ref{sigmaPDE}), we obtain that 
 $ \nphi(x,y)=\ln(-\kappa_2\rho' x+ K(y))$, for some integration function $K(y)$. When we plug this in, in the second equation of (\ref{sigmaPDE}) we get that 
$$
    \frac{1}{-\kappa_2\rho' x+K}(-\kappa_2\rho''x+K')=\frac{-\rho\rho'}{1+\rho^2}.
 $$   
In the above, we may replace the righthand side with     $\displaystyle \frac{\rho''}{\rho'}$. The equation then further simplifies to 
$$
\frac{K'(y)}{K(y)}=\frac{\rho''}{\rho'},
$$
from which $K=\kappa_4\rho'$ (with $\kappa_4$ a constant). We conclude that, for any solution of the SODE in  $\rho$, we can find the following  local conformal factor $\nphi$:
\begin{equation} \label{sig2}
\nphi(x,y)=\ln((\kappa_4-\kappa_2x)\rho'(y))
\end{equation}
and modification
$$ 
\overline{g}_{xz}=-\rho\quad\quad\mbox{and}\quad\quad \overline{g}_{yz}= \frac{\kappa_2}{(\kappa_4-\kappa_2x)\rho'}.
$$
Remark that, even though $\rho$ only depends on $y$, we have now found a conformal factor that depends explicitly on $x$. This result is therefore more general than the one we obtained in Section~\ref{firstmodpar}. Moreover,  even for the much discussed case $\rho(y)=y$, neither one of the expressions (\ref{sig1}) or (\ref{sig2}) for the conformal factors can be found in this generality in the literature on  geodesic extensions, invariant measures and Hamiltonization (as far as we are aware).

\section{$\varphi$-simplicity} \label{pges}

\subsection{Relation to projective geodesic extensions on the configuration manifold}

Besides being Chaplygin, some nonholonomic systems possess a further property: they satisfy the so-called $\varphi$-simplicity property (ee e.g.\ \cite{Garcia,Simoes}). In this section, we will investigate how this extra property effects the search for a projective geodesic extension (always by means of a $\mathcal D$-conformal modification, in the sense of  our conditions $(A')$ and $(B')$).

For a Chaplygin system, we can use either the Riemanian metric $g$ or the principal fibre bundle structure to  decompose the tangent bundle $TQ$ as
$$
TQ={\mathcal D} \oplus {\mathcal D}^g = {\mathcal D} \oplus V\pi. 
$$
For a vector field $X=Y^aX_a+Y^iX_i = Z^aX_a + Z^iV_i$ on $Q$,  with $Z^a=Y^a+K^a_iY^i$ and $Z^i=Y^i$, we will use 
\begin{equation} \label{projections}
{\mathcal P}(X)=Y^aX_a,\qquad {\mathcal Q}(X)=Y^iX_i,\qquad \tilde{\mathcal P}(X)=Z^aX_a, \qquad \tilde{\mathcal Q}(X)=Z^i V_i
\end{equation}
for the projections, respectively. In this, ${\tilde {\mathcal Q}}: TQ \mapsto V\pi$ is actually the principal connection $\omega$ for which $\mathcal D$ is the horizontal distribution. If we denote the curvature of this connection by $\mathcal R$, with, in our convention,
$$
{\mathcal R}(\tilde X,\tilde Y) = \tilde{\mathcal Q}[{\tilde X}^H,{\tilde Y}^H], \qquad \forall \tilde X,\tilde Y \in \mathcal{X}(Q/G),
$$
then we can define a (1,2)-tensor field $\mathcal{T}$ on $Q/G$ by 
$$
\mathcal{T}(\tilde X,\tilde Y) := T\pi\left(\mathcal{P}({\mathcal R}(\tilde X,\tilde Y))\right) \qquad \forall \tilde X,\tilde Y \in \mathcal{X}(Q/G).  
$$

 In the literature \cite{Koiller,Cantrijn,Garcia}, this tensor field is called the {\em gyroscopic tensor}. In our frame $\{X_a,X_i\}$ , we obtain (up to $T\pi$) 
 \begin{equation}\label{gyrtens}
 \mathcal{T}\left(\frac{\partial}{\partial q^a},\frac{\partial}{\partial q^b}\right)=\mathcal{P}([X_a,X_b])= B_{ab}^j \mathcal{P}(V_j) =-B^j_{ab}K_j^d X_d=
 R^d_{ab} X_d.
 \end{equation}
It is clear that this vector field is horizontal.  The `$T\pi$' in the definition stands for the
fact that we need to check that it is also invariant, and that we can regard it as a vector field on $Q/G$, for this reason. The Jacobi identity leads to
\begin{eqnarray*}
    0&=& [V_i,[X_a,X_c]]+[X_c,[V_i,X_a]]+[X_a,[X_c,V_i]]\\
    &=& [V_i,[X_a,X_c]]+0+0\\
    &=& V_i(R^k_{ac})X_k+R^k_{ac}[V_i,X_k]+V_i(R^d_{ac})X_d+R_{ac}^d [V_i,X_d]\\
    &=& V_i(R^k_{ac})X_k+R_{ac}^k [V_i,V_k]+R^k_{ac}V_i(K_k^e)X_e+R_{ac}^k K_k^e[V_i,X_e]+V_i(R_{ac}^d)X_d\\
    &=& V_i(R^k_{ac})V_k +V_i(R^k_{ac})K_k^e X_e-C_{ik}^l R^k_{ac} V_l+R^k_{ac}V_i(K_k^e)X_e+V_i(R_{ac}^d)X_d\\
    &=& \left[V_i(R_{ac}^k)-C_{ik}^l R_{ac}^k \right]V_l+\left[ V_i(R_{ac}^k)K_k^e+R_{ac}^k V_i(K_k^e)+V_i(R_{ac}^e)\right]X_e.
\end{eqnarray*}
The first term implies that 
$$
V_i(R^k_{ac})=C^k_{il} R_{ac}^l.
$$
If we  use this  in the second term, we get:  
$$ 
C_{il}^k R_{ac}^k K_k^e-R_{ac}^k K_k^e C^k_{ie}+V_i(R_{ac}^e)=0,
$$
and therefore $$ V_i(R_{ac}^e)=0.$$
It is now easy to see that
$$ 
[V_i,R^d_{ab} X_d]=V_i(R_{ab}^d)X_d+R_{ab}^d[V_i,X_d]=0.
$$

$\varphi$-simplicity is, by definition, a special property of the  tensor field $ \mathcal{T}$.
\begin{defi} 
 A Chaplygin system is $\varphi$-simple if its gyroscopic tensor $\mathcal{T}$  is of the form $\mathcal{T}=-d\varphi\otimes\rm{id}+\rm{id}\otimes d\varphi$, for some function $\varphi$ on $Q/G$.
\end{defi}

 In coordinates, this means that the bracket coefficients $R_{ab}^d$  satisfy
\begin{equation}\label{phisimpleprop}
R^d_{ab}=-\frac{\partial \varphi}{\partial q^a}\delta_b ^d+\frac{\partial \varphi}{\partial q^b}\delta ^d _a.
\end{equation}

To relate this property to conditions $(A')$ and $(B')$, it is important to notice that the latter are conditions in the unknowns $\nphi$ and $\overline{g}_{ai}$, which are functions on $Q$. The $\varphi$-simplicity property depends on the existence of a  function $\varphi$  on the reduced manifold $Q/G$. Before we set the stage for relating them, we make two observations:

\begin{enumerate} \item We have two options for the configuration manifold:
\begin{itemize}
    \item either we consider the $\varphi$-simplicity property in its  equivalent form as a property on $Q$, by lifting it to the expression
    \begin{equation} \label{phisimpleup}
    R^d_{ab}=-X_a(\phi)\delta^d_b+X_b(\phi)\delta_a^d,\quad\textrm{where now}\quad\phi:=\varphi\circ\pi\in\mathcal{C}^{\infty}(Q),
    \end{equation} 
    \item or, we reduce conditions $(A')$ and $(B')$ to conditions on $Q/G$. In that case, we need to show that all involved terms  are $G$-invariant.  
\end{itemize}
\item We  need  to make appropriate choices for both $\nphi$ and $\overline{g}_{ai}$. While it seems natural to consider $\nphi=\varphi\circ \pi$,  we indicate in the next proposition that the $\ort$-orthogonal projective geodesic extensions,
$$
\overline{g}_{ai}=-G_{ai}, \qquad \mbox{with } G_{ai}=g(X_a,V_i),
$$
(that we had already encountered in the previous section) are the ideal candidates. This natural assumption also lies at the basis of the specific choice that has been made in Theorem~3.4 in \cite{Simoes}.  
\end{enumerate}

\begin{prop} \label{phisimpleprojgeodext} Suppose that the nonholonomic system is $\varphi$-simple. Then, the couple $(\overline{g}_{ai}=-G_{ai},\nphi=\varphi\circ \pi)$ satisfies both conditions $(A')$ and $(B')$, i.e.\ we have a $\ort$-orthogonal projective geodesic extension.
  \label{phisimplicity}
\end{prop}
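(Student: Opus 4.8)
The key observation is that the whole statement reduces to checking condition $(A')$. Indeed, by Corollary~\ref{corollary}, as soon as the $\ort$-orthogonal couple $(\overline{g}_{ai}=-G_{ai},\nphi=\nvarphi\circ\pi)$ satisfies $(A')$, it automatically satisfies $(B')$ as well; and then (by the convention introduced after Proposition~\ref{prop1}, together with the second statement of that proposition, which provides a suitable choice of the free components $\overline{g}_{ij}$ making $\hat g = e^{2\nphi}\overline{g}$ a genuine Riemannian metric) the couple is a projective geodesic extension. So the entire content of the proposition is that the $\varphi$-simplicity identity is precisely what makes $(A')^G$ hold, provided we take $\nvarphi=\varphi$, i.e.\ $\nphi=\varphi\circ\pi=:\phi$.

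To verify $(A')$ I would substitute $\theta_k=\overline{g}_{ak}v^a=-G_{ak}v^a$ and $\nphi=\phi$ into
\[
g_{bd}\big(\delta_a^d X_c(\nphi)-\delta_c^d X_a(\nphi)\big)v^av^b+\theta_k R^k_{ac}v^a=0 .
\]
The first term becomes $\big(g_{ab}X_c(\phi)-g_{bc}X_a(\phi)\big)v^av^b$. For the second term, $\theta_k R^k_{ac}v^a=-G_{bk}R^k_{ac}v^av^b$, and here the useful identity (\ref{useful}) applies: after the appropriate relabelling (with the summed vertical index $k$ and the summed $\mathcal D$-index renamed) it gives $G_{bk}R^k_{ac}=g_{db}R^d_{ac}$, so the second term equals $-g_{db}R^d_{ac}v^av^b$.

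Next I would feed in the \emph{lifted} form (\ref{phisimpleup}) of $\varphi$-simplicity, $R^d_{ac}=-X_a(\phi)\delta^d_c+X_c(\phi)\delta^d_a$ — this is the right version to use because $(A')$ is a condition on $Q$, and $X_a(\phi)=(\partial\varphi/\partial q^a)\circ\pi$ since $X_a$ is the horizontal lift of $\partial/\partial q^a$. This turns $-g_{db}R^d_{ac}v^av^b$ into $\big(g_{bc}X_a(\phi)-g_{ab}X_c(\phi)\big)v^av^b$. Adding the two contributions, the four terms cancel in pairs by the symmetry $g_{ab}=g_{ba}$, so $(A')$ holds; an application of Corollary~\ref{corollary} then yields $(B')$ and finishes the proof.

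The computation carries no real difficulty; the only points that need care are (i) applying (\ref{useful}) with the correct index relabelling to pass from $G_{bk}R^k_{ac}$ to $g_{db}R^d_{ac}$, and (ii) using the lifted identity (\ref{phisimpleup}) rather than the reduced one (\ref{phisimpleprop}), since all the objects appearing in $(A')$ and $(B')$ live on the total space $Q$.
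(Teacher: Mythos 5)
Your proposal is correct and follows essentially the same route as the paper: verify $(A')$ by feeding in the lifted $\varphi$-simplicity identity (\ref{phisimpleup}) together with $\overline{g}_{ai}=-G_{ai}$, and then invoke Corollary~\ref{corollary} to get $(B')$ for free. The only cosmetic difference is that the paper organizes the cancellation as $\overline{g}_{b\alpha}R^\alpha_{ac}v^av^b=B^i_{ac}\,\overline{g}(V_i,X_b)v^av^b=0$ using the $\ort$-orthogonality directly, whereas you pass through the identity (\ref{useful}) to rewrite $-G_{bk}R^k_{ac}$ as $-g_{db}R^d_{ac}$ and cancel term by term; these are the same computation.
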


\begin{proof}

In view of (\ref{phisimpleup}), the left hand side of condition $(A')$ becomes with the choice $\nphi=\varphi\circ \pi$:
\begin{eqnarray*}
&& g_{bd}\left(-\delta_c^d X_a(\varphi\circ \pi)+\delta_a^d X_c(\varphi\circ \pi)\right)v^a v^b+\theta_{k} R^k_{ac}v^a\\
    &&\qquad\qquad\qquad=\left[ g_{bd} R^d_{ac}+\overline{g}_{bk}R^k_{ac}\right]v^a v^b=\overline{g}_{b\alpha} R^{\alpha}_{ac} v^a v^b=\overline{g}([X_a,X_c],X_b)v^a v^b\\&&
   \qquad\qquad\qquad=\overline{g}(B^i_{ac} V_i, X_b)v^a v^b= B^i_{ac}\overline{g}(V_i,X_b)v^a v^b.
\end{eqnarray*}
Since we assume that the metric $\overline{g}$ is such that $\overline{g}|_{V\pi\times \mathcal{D}}=0$, we conclude that the combination $(\overline{g},\nphi=\varphi\circ \pi)$ satisfies condition $(A')$. From Corollary \ref{corollary}, with $\nvarphi = \varphi$, we have that condition $(B')$ is then satisfied as well, again in view of  $\overline{g}_{ai}=-G_{ai}$. 
\end{proof}


The conclusion is  that $\varphi$-simplicity induces, in a very specific way, a projective geodesic extension. It also explains why condition $(B')$ does not seem to have made any appearance yet in the literature: the assumption of  $\ort$-orthogonality is often implicitly assumed. The result in Proposition~\ref{phisimpleprojgeodext} encompasses Theorem~3.4 of \cite{Simoes}. In their Remark~3.5  the authors of \cite{Simoes} had already mentioned that their choice is not unique. Now, we  know that the remaining freedom is encoded in condition $(B')$.

Besides,  the next examples show, that one may still obtain conformal geodesic extensions in the case where the system is not $\varphi$-simple. Moreover, even when the system is $\varphi$-simple, one can still obtain a geodesic extension for a conformal factor $\nphi$ that is unrelated to the function $\varphi$ that characterizes the $\varphi$-simplicity of the system. The overall conclusion is that having a projective geodesic extension via a $\mathcal D$-conformal modification is a far more general property than $\varphi$-simplicity.

\subsection{The generalized  nonholonomic particle} \label{thirdmodpar}

    We consider again the case where $\rho$ only depends on $y$.         Since 
    $$
    [X_x,X_y]=-\frac{\partial \rho(y)}{\partial y} X_z-\frac{\rho\frac{\partial \rho(y)}{\partial y}}{1+\rho(y)^2} X_x
    $$ 
    and, therefore, $R_{xy}^x=-\displaystyle\frac{\rho\frac{\partial \rho(y)}{\partial y}}{1+\rho(y)^2}$ and $R_{xy}^y=0$,  the requirement (\ref{phisimpleup}) of $\varphi$-simplicity is here: 
\begin{equation*}\begin{cases}
        X_x(\phi)=R^x_{xy},\\
        X_x(\phi) =-R_{xy}^y,\nonumber\end{cases}\qquad\Leftrightarrow\qquad\begin{cases}\displaystyle \frac{\partial\phi}{\partial y}=
      -\frac{\rho\rho'}{1+\rho^2}, \\[3mm]
     \displaystyle  -\frac{\partial \phi}{\partial x}+\rho\frac{\partial\phi}{\partial z}=0.\nonumber\end{cases}
    \end{equation*}

We have already discussed the generalized  particle on two occasions. Not withstanding  the fact that, in Section~\ref{firstmodpar},  we had made the ansatz that $\nphi$ only depends on $y$  (which means, in particular, that $\nphi\in\mathcal{C}^\infty(Q/G)$), the conformal factor 
$$ 
\nphi(y)=\frac{1}{2}\ln \left(\frac{(\rho(y)a_2 - a_1)^2}{1 + \rho(y)^2}\right)
$$
we had found does not necessarily satisfy the requirement for $\varphi$-simplicity, for all choices of the constants $a_1$ and $a_2$: Indeed, we have
$$\frac{\partial \nphi}{\partial y}=-\frac{1}{2}\frac{(a_2\rho-a_1)^2}{1+\rho^2}\left[\frac{(a_2\rho-a_1)^2\rho\rho'-2(1+\rho^2)(a_2\rho-a_1)a_2 \rho}{(a_2\rho-a_1)^2}\right]=-\frac{\rho\rho'}{1+\rho^2}+a_2\frac{\rho'}{(a_2\rho-a_1)},$$
and this equals $\displaystyle -\frac{\rho\rho'}{1+\rho(y)^2}$ if and only if $a_2=0$. Therefore, the conformal factor $\nphi$ with $a_2\neq 0$ leads to a  more general projective geodesic extension,  than the one that could have been obtained by using the function 
 $$ 
\varphi(y)=-\frac{1}{2}\ln \left(1 + \rho(y)^2\right),
$$
for which the generalized  particle is  (up to a constant) $\varphi$-simple.

Given that $\displaystyle\frac{\partial\nphi}{\partial x} \neq 0$, the same conclusion holds for the conformal factor $\nphi(x,y)=\ln((\kappa_4-\kappa_2x)\rho'(y))$ that we had obtained in Section~\ref{secondmodpar}.

\subsection{The two-wheeled carriage} \label{localglobal}

Let us also reconsider the example of the two-wheeled carriage, in the current light of $\varphi$-simplicity. For this example, the vertical vector fields are given by 
\[
{V}_x=\frac{\partial}{\partial x}, \quad
{V}_y=\frac{\partial}{\partial y}, \quad
{V}_{\theta}=\frac{\partial}{\partial \theta} - y \frac{\partial}{\partial x} + x\frac{\partial}{\partial y}.
\]
One may then compute that
\begin{eqnarray*}
&& G_{\psi_1x} = -\frac{R}{2}m\cos\theta + \frac{R}{2c}m_0\ell \sin\theta,\qquad G_{\psi_1y} = -\frac{R}{2}m\sin\theta -\frac{R}{2c}m_0\ell \cos\theta,\\
&& G_{\psi_2x} = -\frac{R}{2}m\cos\theta -\frac{R}{2c}m_0\ell \sin\theta,\qquad G_{\psi_2y} = -\frac{R}{2}m\sin\theta +\frac{R}{2c}m_0\ell \cos\theta.
\end{eqnarray*}

Given that 
$$
R_{\psi_1\psi_2}^{\psi_1}=-R_{\psi_1\psi_2}^{\psi_2}=-\frac{R^3 m_0}{4c^2(P+Q)}\ell,
$$
   the $\varphi$-simplicity property requires $\varphi$ to be a solution of the following two partial differential equations 
$$
\frac{\partial \varphi}{\partial \psi_2}=-\frac{R^3 m_0}{4c^2(P+Q)}\ell\quad\mbox{and}\quad\frac{\partial \varphi}{\partial \psi_1}=\frac{R^3 m_0}{4c^2(P+Q)}\ell.
$$
From this, we get that the two-wheeled carriage is $\varphi$-simple, with 
\[
\varphi=\frac{R^3 m_0}{4c^2(P+Q)}\ell\left(\psi_1-\psi_2\right).
\]

Unfortunately, the function $\varphi$ is only locally defined since it is not periodic in $\psi_1$ and $\psi_2$. Together with what we had said about this example in  Section~\ref{first2wheeled}, we now reach the following conclusions:
\begin{itemize}
\item For any $\ell$, the couple $(\overline{g}_{ai}=-G_{ai},\nphi=\varphi\circ\pi)$ gives a local projective geodesic extension of the two-wheeled carriage (according to Proposition~\ref{phisimpleprojgeodext}).

\item When $\ell$ has the special value $\ell_1\neq 0$ we have actually found two distinct projective geodesic extensions: $(\overline{g}_{ai}=-G_{ai},\nphi=\varphi\circ\pi)$ and $(\overline{g}_{ai}=\hat{g}_{ai},\nphi=0)$ with $\hat{g}_{ai}$  as in expressions (\ref{gbarone}). The second is global and unrelated to the $\varphi$-simplicity property of this Chaplygin system.

\item When $\ell=0$, the expressions for $G_{ai}$ overlap with those in (\ref{gbarone}), taking the extra minus-sign into account. In that case also $\varphi=0$, and this is in agreement with $\nphi=0$. We have therefore found only one global projective geodesic extension.

\end{itemize}


\subsection{Reduction of the conditions for a projective geodesic extension}

The examples  confirm that there exist projective geodesic extensions that are non-$\varphi$-simple, but still  $\mathcal D$-conformal modifications (conditions $(A')$ and $(B')$). Now that we have established the greater generality of the current approach (with respect to e.g.\ \cite{Simoes}), we will make a deeper analysis of how the special case of $\ort$-orthogonality,  $(\overline{g}_{ai}=-G_{ai},\nphi=\nvarphi\circ\pi)$,  fits within it. From Corollary~\ref{corollary} we know that, in that case, we only need to examine condition $(A')$ in more detail. In the literature there exist equivalent formulations of e.g.\ $\varphi$-simplicity that make use of the description of a Chaplygin system as a dynamical system on the reduced configuration manifold $Q/G$. For this reason, we start this section by showing how the condition $(A')$  can also be interpreted as a condition on $Q/G$ (under the assumption that we look for invariant solutions). To show that condition $(A')$ is $G$-invariant, we will rely 
 on the following well-known characterization (see e.g.\ \cite{KN,Warner}).    

 \begin{prop} \label{thm:invariance} Suppose that $G$ is a connected Lie group. A tensor  field 
  $t$ is $G$-invariant if and only if  $\mathcal{L}_{V_i}t=0$, for any infinitesimal generator $V_i$.  
\end{prop}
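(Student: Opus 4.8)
The plan is to reduce the claim to the standard relation between the Lie derivative and the flows of the infinitesimal generators. Write $\Phi\colon G\times Q\to Q$ for the action and, for $g\in G$, let $\Phi_g\colon Q\to Q$ be the induced diffeomorphism; by definition $t$ is $G$-invariant exactly when $\Phi_g^{*}t=t$ for all $g\in G$. The essential point is that the infinitesimal generator $V_i=(E_i)_Q$ has flow $s\mapsto\Phi_{\exp(sE_i)}$, so that the defining property of the Lie derivative yields, for all $s$,
\[
\frac{d}{ds}\,\big(\Phi_{\exp(sE_i)}\big)^{*}t=\big(\Phi_{\exp(sE_i)}\big)^{*}\mathcal{L}_{V_i}t ,
\qquad\text{and in particular}\qquad
\mathcal{L}_{V_i}t=\left.\frac{d}{ds}\right|_{s=0}\big(\Phi_{\exp(sE_i)}\big)^{*}t .
\]
The forward implication is then immediate: if $\Phi_g^{*}t=t$ for all $g$, then $\big(\Phi_{\exp(sE_i)}\big)^{*}t=t$ for all $s$, and differentiating at $s=0$ gives $\mathcal{L}_{V_i}t=0$ (no connectedness is needed here).

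For the converse, assume $\mathcal{L}_{V_i}t=0$ for each basis generator. Since the Lie derivative is $\mathbb{R}$-linear in its vector-field argument and the $E_i$ form a basis of the Lie algebra of $G$, it follows that $\mathcal{L}_{(E)_Q}t=0$ for every element $E$ of the Lie algebra. Fixing such an $E$, the first displayed identity (with $E$ in place of $E_i$) shows that the curve $s\mapsto\big(\Phi_{\exp(sE)}\big)^{*}t$ solves a linear ODE with vanishing right-hand side, hence is constant, equal to $t$ at $s=0$. Thus $\Phi_h^{*}t=t$ for every $h$ in the image of $\exp$. Because $\exp$ is a local diffeomorphism near $0$, this image contains a neighbourhood of the identity in $G$, and since $G$ is connected such a neighbourhood generates $G$: every $g\in G$ can be written as a finite product $g=\exp(E^{(1)})\cdots\exp(E^{(r)})$. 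Functoriality of the pullback then gives $\Phi_g^{*}t=\big(\Phi_{\exp(E^{(r)})}\big)^{*}\cdots\big(\Phi_{\exp(E^{(1)})}\big)^{*}t=t$, so $t$ is $G$-invariant.

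The only step that is not a routine flow-box computation is the last one, namely upgrading invariance under the one-parameter subgroups (equivalently, under a neighbourhood of $e$) to invariance under all of $G$; this is precisely where the connectedness hypothesis is used, and it is the main obstacle. I would also note, for care, that passing from $\mathcal{L}_{(E)_Q}t=0$ on a basis to the same statement for every $E$ is legitimate because the Lie derivative is additive and homogeneous in its vector-field slot: even though the flow of a sum of generators is not the composition of the individual flows, the ODE argument above is applied to each $E$ separately, so this causes no difficulty.
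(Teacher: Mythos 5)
Your proof is correct. Note that the paper itself does not prove this proposition: it is stated as a well-known characterization with a pointer to Kobayashi--Nomizu and Warner, so there is no in-paper argument to compare against. What you have written is the standard textbook proof that those references contain: the identity $\frac{d}{ds}\big(\Phi_{\exp(sE)}\big)^{*}t=\big(\Phi_{\exp(sE)}\big)^{*}\mathcal{L}_{(E)_Q}t$ gives the forward direction by differentiation at $s=0$ and the converse by the constancy of $s\mapsto\big(\Phi_{\exp(sE)}\big)^{*}t$. You correctly isolate the two points that need care: the passage from the basis $\{E_i\}$ to arbitrary $E$ in the Lie algebra (legitimate because $\mathcal{L}_{X}t$ is $\mathbb{R}$-linear in $X$ and $E\mapsto (E)_Q$ is linear), and the use of connectedness to pass from invariance under a neighbourhood of the identity to invariance under all of $G$ via finite products of exponentials. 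The order reversal $\Phi_g^{*}=\big(\Phi_{\exp(E^{(r)})}\big)^{*}\circ\cdots\circ\big(\Phi_{\exp(E^{(1)})}\big)^{*}$ is also handled correctly. No gaps.
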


Recall first that, for a Chaplygin system, the kinetic energy metric $g$ and the distribution $\mathcal D$ are $G$-invariant. 

\begin{lem} \label{inv}
    If we assume that the metric $\overline{g}$ and  the function $\nphi$ are $G$-invariant, then the condition $(A')$ is also $G$-invariant and it reduces to a condition on $Q/G$.
\end{lem}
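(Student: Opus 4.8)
The plan is to verify the two separate claims in Lemma~\ref{inv}: first, that condition $(A')$ is $G$-invariant (as a tensorial equation), and second, that it therefore ``descends'' to a well-defined condition on $Q/G$. For the first part, I would invoke Proposition~\ref{thm:invariance} and show that each object appearing in $(A')$ has vanishing Lie derivative along every infinitesimal generator $V_i$. The left-hand side of $(A')$, namely $g_{bd}(\delta^d_a X_c(\nphi)-\delta^d_c X_a(\nphi))v^av^b + \theta_k R^k_{ac}v^a$, can be read as the evaluation on $v$ of a semi-basic object built from: the metric $g$ (invariant by the Chaplygin hypothesis), the one-form $d\nphi$ (invariant since $\nphi$ is $G$-invariant, so $\mathcal{L}_{V_i}d\nphi = d(V_i(\nphi)) = 0$), the bracket coefficients $R^k_{ac}$, and the components $\overline{g}_{ak}$ of $\overline{g}$ (invariant by assumption). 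So the one remaining point is the invariance of the $R^k_{ac}$ term. Here I would recall that the excerpt has already established, via the Jacobi identity computation preceding the definition of $\varphi$-simplicity, that $V_i(R^d_{ac}) = 0$ and $V_i(R^k_{ac}) = C^k_{il}R^l_{ac}$; the first of these says the ``horizontal part'' $\mathcal{T} = R^d_{ab}X_d$ of the curvature is invariant, and the second is exactly the transformation rule that makes $\theta_k R^k_{ac}v^a = \overline{g}_{bk}R^k_{ac}v^av^b$ combine invariantly (the $C^k_{il}$ contribution is absorbed against the invariance of $\overline g$ once one is careful about how $\overline g_{bk}$ transforms relative to the frame $\{X_a,X_i\}$, which is not itself invariant since $[V_i,X_k]\neq 0$). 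The cleanest route is to package $(A')$ as $\iota_{v}\iota_{v}$ of a genuine tensor field on $Q$ — roughly, something like $S(X,Y,Z) = g(\mathcal{P}[X,Y],Z) + \overline g(\mathcal{Q}[X,Y],Z)+\cdots + (\text{terms in }d\nphi)$ — and then all the awkward frame-dependent bracket terms are subsumed: $\mathcal{L}_{V_i}$ of the whole tensor vanishes because $\mathcal L_{V_i}$ kills $g$, $\overline g$, $d\nphi$, and commutes appropriately with the projections $\mathcal P,\mathcal Q$ (which are themselves $G$-invariant, being built from $g$ and $V\pi$, both invariant).

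For the second part, once $(A')$ is recognized as the vanishing of a $G$-invariant semi-basic (in the $v^av^b$ sense) object on $Q$, I would argue that it is moreover \emph{basic}: it involves only the components along $X_a$ and the quasi-velocities $v^a$ (which restrict to $\dot q^a$ and are themselves $\pi$-projectable), together with the invariant functions $g_{ab}=G_{ab}$, $\overline g_{ak}$, $\partial\nphi/\partial q^a = X_a(\nvarphi)$ (since $V_i(\nphi)=0$), and $R^d_{ac}$. Each of these invariant functions on $Q$ descends to a function on $Q/G$ — $g_{ab}\mapsto g^{red}_{ab}$ as in \eqref{gred}, $R^d_{ac}$ to the gyroscopic tensor components \eqref{gyrtens}, $\overline g_{ak}$ to some functions on $Q/G$, and $\nphi\mapsto\nvarphi$ — and hence the whole equation $(A')$ is the pullback under $\pi$ (or rather $T\pi$) of an identity among tensors/functions on $Q/G$. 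Concretely I expect the reduced form to read something like $g^{red}_{bd}(\delta^d_a\partial_c\nvarphi - \delta^d_c\partial_a\nvarphi)\dot q^a\dot q^b + \overline g_{bk}R^k_{ac}\dot q^a\dot q^b = 0$ on $Q/G$, i.e.\ $(A')$ ``is'' an equation on the quotient, which is the assertion of the Lemma.

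The main obstacle, and the step I would be most careful about, is the invariance of the term $\theta_k R^k_{ac}v^a = \overline g_{bk}R^k_{ac}v^av^b$: neither the frame vectors $X_k$ nor the components $\overline g_{bk}$ nor the structure functions $R^k_{ac}$ are individually $G$-invariant (only $R^d_{ac}$ is), so a naive ``differentiate each factor'' argument produces several $C$-terms that must cancel. The honest way to handle this is to avoid components altogether and differentiate the intrinsic tensor, using that $\mathcal{L}_{V_i}\overline g = 0$, $\mathcal{L}_{V_i}g=0$, $\mathcal{L}_{V_i}\mathcal R = 0$ (curvature of an invariant connection is invariant, which also follows from the $V_i(R^d_{ac})=0$ computation together with the corresponding statement for the vertical components, already in the excerpt), and that the Lie derivative commutes with the projections $\mathcal P$, $\mathcal Q$. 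An alternative, more computational route is to take the three identities $V_i(R^d_{ac})=0$, $V_i(R^k_{ac})=C^k_{il}R^l_{ac}$, $V_i(\overline g_{bk}) = -C^l_{ik}\overline g_{bl}$ (the last being the component form of $\mathcal L_{V_i}\overline g=0$ in the non-invariant frame, derived the same way the excerpt derives the $K$-terms) and check by direct substitution that the $C$-contributions to $\mathcal L_{V_i}$ of $\overline g_{bk}R^k_{ac}v^av^b$ telescope to zero; but I would prefer the intrinsic argument for brevity, which is almost certainly what the authors do — the Lemma is ``easy'' precisely because, phrased tensorially, everything in sight is manifestly $G$-invariant and $\pi$-related to the quotient.
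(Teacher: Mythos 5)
Your proposal is correct, and the ``computational route'' you relegate to second choice is in fact exactly the paper's proof: the authors symmetrize $(A')$ in $a,b$ to drop the $v^av^b$ factors, observe that $g_{bc}$ and $X_a(\nphi)$ are invariant functions on their own (using $[V_i,X_a]=0$ and $V_i(\nphi)=0$), and then handle the problematic term $\overline{g}_{bk}R^k_{ac}$ by direct differentiation. The identities you list are precisely the ones they establish: $V_i(R^k_{ac})=C^k_{il}R^l_{ac}$ from the Jacobi-identity computation, and $V_i(\overline{g}_{bk})=g_{bf}V_i(K^f_k)-C^j_{ik}(\overline{g}_{bj}+G_{bj})$ from $\mathcal{L}_{V_i}\overline{g}=0$, which together with the auxiliary fact $V_i(K^f_k)=-K^f_jC^j_{ik}$ (itself derived from $\mathcal{L}_{V_i}g=0$ and $V_i(G_{bk})=-C^j_{ik}G_{bj}$) collapses to your $V_i(\overline{g}_{bk})=-C^j_{ik}\overline{g}_{bj}$; the two $C$-contributions then telescope exactly as you predict. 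Your preferred ``intrinsic'' packaging of $(A')$ as the double contraction of a genuinely $G$-invariant tensor is a legitimate alternative and would shorten the argument, but it is not what the paper does, and if you go that way you should still verify explicitly that the projections $\mathcal{P},\mathcal{Q}$ commute with $\mathcal{L}_{V_i}$ (which requires the invariance of $\mathcal{D}^g$, i.e.\ of both $g$ and $\mathcal{D}$) rather than asserting it. The reduction to $Q/G$ is treated in the paper with the same level of informality as in your sketch: invariant individual factors descend separately, while $\overline{g}_{bk}R^k_{ac}$ descends only as a whole, yielding the reduced form (\ref{Aprimered}) you anticipate.
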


\begin{proof} We use the previous proposition to show that  all the terms of condition $(A')$ are invariant. 
From the assumptions we have 
    $$
    \mathcal{L}_{V_i}g=0,\quad\mathcal{L}_{V_i}\overline{g}=0,\quad \mathcal{L}_{V_i}\nphi= V_i(\phi)=0,\quad \mathcal{L}_{V_i} X_a=[V_i,X_a]=0.
    $$

Condition $(A')$ is given by 
$$
\left[g_{bd}\left(-\delta_c^d X_a(\nphi)+\delta_a^d X_c(\nphi)\right)+\overline{g}_{kb} R^k_{ac}\right]v^av^b=0,
$$
but we may use symmetrization in the indices $a,b$ to toss aside the factors $v^av^b$:
$$
 \overline{g}_{bk}R^k_{ac}+\overline{g}_{ak}R^k_{bc}-g_{bc}X_a(\nphi)-g_{ac}X_b(\nphi)+2g_{ab}X_c(\nphi)=0.
$$
 
The last three terms are invariant because $g_{bc}$ and $X_a(\nphi)$ are all invariant functions. Indeed, 
\begin{eqnarray*}
 0&=&(\mathcal{L}_{V_i}g)(X_a,X_b)= {V_i}(g(X_a,X_b))-g(\mathcal{L}_{V_i}(X_a),X_b)-g(X_a,\mathcal{L}_{V_i}(X_b)) \\
 &=& {V_i}(g_{ab})
\end{eqnarray*}
and
$$
    \mathcal{L}_{V_i}(X_a(\nphi))= \mathcal{L}_{V_i}(X_a)(\nphi)+X_a(\mathcal{L}_{V_i}(\nphi)) =0.
$$

It remains to show that the  terms of the type $\overline{g}_{kb} R^k_{ac}$ are also $G$-invariant. 
We have already established in Section~\ref{pges} that $V_i(R^k_{ac})=C^k_{il} R_{ac}^l$. 
 To find an expression for $V_i(\overline{g}_{bk})$, we work as follows. Given that $X_k=V_k+K_k^e X_e$, we get
\begin{eqnarray*}
 0&=& (\mathcal{L}_{V_i}\overline{g})(X_b,X_k)\\
 &=& V_i(\overline{g}_{bk})-\overline{g}([V_i,X_b],X_k)-\overline{g}(X_b,[V_i,X_k])\\
 &=& V_i(\overline{g}_{bk})-\overline{g}(X_b,-C_{ik}^j V_j+V_i(K_k^f)X_f+K_k^f[V_i,X_f])\\
 &=& V_i(\overline{g}_{bk})-\overline{g}_{bf}V_i(K_k^f)+C^j_{ik}(\overline{g}_{bj}+G_{bj})
\end{eqnarray*}
because $\overline{g}(X_b,V_j)=\overline{g}(X_b,X_j-K_j^a X_a)=\overline{g}_{bj}+G_{jc}g^{ac}g_{bc}=\overline{g}_{bj}+G_{bj}$. From this, we deduce that 
$$
V_i(\overline{g}_{bk})=g_{bf}V_i(K_k^f)-C_{ik}^j(\overline{g}_{bj}+G_{bj}).
$$
Together, this gives 
\begin{eqnarray*}
    \mathcal{L}_{V_i}(\overline{g}_{kb} R^k_{ac})&=&V_i(\overline{g}_{kb} R^k_{ac})\\
    &=&V_i(\overline{g}_{kb})R^k_{ac}+\overline{g}_{kb} V_i(R^k_{ac})\\
    &=& (g_{bf}V_i(K_k^f)-C_{ik}^j\overline{g}_{bj}-C_{ik}^j G_{bj})R_{ac}^k +\overline{g}_{bj}C^j_{ik} R^k_{ac}\\
    &=& [g_{bf} V_i(K^f_k)-G_{bj}C_{ik}^j]R_{ac}^k\\
    &=& g_{bf}[V_i(K_k^f)+K_j^fC_{ik}^j]R_{ac}^k.
\end{eqnarray*}
We need to prove that this vanishes. For this we show first that the inverse coefficients $g^{ab}$ of $g_{ab}=g(X_a,X_b)$ are invariant as well. Indeed, since $g_{ab}$ are invariant and $g^{ab}g_{bc}=\delta_c^a$ we get 
$$
0=V_i(\delta_c^a)=V_i(g^{ab}g_{bc})=V_i(g^{ab})g_{bc}+0,
$$
and therefore $V_i(g^{ab})=0$. Moreover, we know that 
\begin{eqnarray*}
    0&=&(\mathcal{L}_{V_i}g)(X_b,V_k)\\
    &=& V_i(G_{bk})-g([V_i,X_b],V_k)-g(X_b,[V_i,V_k])\\
    &=&V_i(G_{bk})-0+C_{ik}^jG_{bj}.
\end{eqnarray*}
Therefore,
$$
V_i(K_k^f)=-V_i(g^{ef}G_{fk})=-V_i(g^{ef})G_{fk}-g^{ef}V_i(G_{fk})=-g^{ef}C_{ik}^j G_{fj}=-K^f_jC_{ik}^j,
$$
and this means that 
$$
\mathcal{L}_{V_i}(\overline{g}_{kb} R^k_{ac})=g_{bf}[V_i(K_k^f)+K_j^fC_{ik}^j]R_{ac}^k=0. 
$$
\end{proof}

In the rest of the paper, we assume that the metric $\overline{g}$ and  the function $\nphi$ are $G$-invariant and that condition $(A')$  reduces  to a condition on $Q/G$. In the last terms both factors $g_{ab}$ and $X_a(\phi)$ are invariant functions on $Q$, on their own.  We will denote their reduced functions on $Q/G$ by $g_{ab}$ and $\displaystyle\frac{\partial \nvarphi}{\partial q^a}$, respectively. In fact, $g_{ab}$ are the components of the reduced metric $g^{red}$ (see (\ref{gred})) with respect to the standard basis of vector fields on $Q/G$. In the terms of the type $\overline{g}_{kb} R^k_{ac}$, the individual factors are not invariant functions. We will, however, keep using the notation $\overline{g}_{kb} R^k_{ac}$ when technically we mean the reduced function of the entire term. With this convention, the reduced version of $(A')$ is
\begin{equation}\label{Aprimered}
(A')\quad\Leftrightarrow\quad \overline{g}_{bk}R^k_{ac}+\overline{g}_{ak}R^k_{bc}-g_{bc}\frac{\partial\nvarphi}{\partial q^a}-g_{ac}\frac{\partial\nvarphi}{\partial q^b}+2g_{ab}\frac{\partial\nvarphi}{\partial q^c}=0.
\end{equation}

Recall from Section~\ref{prelimChap} that the solutions of the reduced equations can be seen as the base integral curves of the vector field $\redvf$ that is characterized  by the expression (\ref{symplectic}),
$$
\iota_{\redvf}\omega_l=d E_l+\alphabar.
$$


The 1-form $\alphabar$ is what we call the {\em gyroscopic 1-form}. It forms the obstruction for the reduced vector field to be a Hamiltonian vector field with respect to the 2-form $\omega_l=-d\theta_l$. In local coordinates, this semi-basic form is given by 
$$
\alphabar=-\overline{R_{bc}^k \dot{q}^b V_k^V(L)} dq^c=-R_{bc}^kG_{ak}\dot{q}^a\dot{q}^b dq^c.
$$

Next to a gyroscopic one-form, also the so-called {\em gyroscopic two-form} has been introduced in \cite{Cantrijn}: it is (up to our convention in the definition of curvature) the  2-form $\Xi^G$.
 \begin{equation} \label{defXiG}
 \Xi^G(\tilde X,\tilde Y)= -g({\rm{\bf T}}^H, {\mathcal{R}}({\tilde X}^H,{\tilde Y}^H) =-g({\rm{\bf T}}^H,\tilde{\mathcal{Q}}[{\tilde X}^H,{\tilde Y}^H]),\qquad\forall \tilde X,\tilde Y\in\mathcal{X}(Q/G), 
 \end{equation}
where $\mathcal R$ is the curvature of the principal connection $\omega$ on $\pi$ (and $\tilde{\mathcal{Q}}:TQ\rightarrow V\pi$ is its vertical projection, defined in (\ref{projections})), and ${\rm{\bf T}}^H = v^a X_a = {\dot q}^aX_a$ is the horizontal lift of the canonical vector field along $\overline\tau:T(Q/G)\rightarrow Q/G$. In local coordinates $\Xi^G$ is given by 
 $$
 \Xi^G=-\frac{1}{2} R^j_{bc} G_{ja} \dot{q}^a dq^b\wedge dq^c.
 $$
 That this does indeed define  a tensor field on $Q/G$ follows from the fact that  the coefficients $R^j_{bc} G_{ja}$ (similar as $R^j_{bc} {\overline g}_{ja}$ before) are invariant functions on $Q$. We have included a subscript $G$ in our notation to emphasize that $\Xi^G$ depends on the given kinetic energy metric $g$, through its coefficients $G_{ai}$. These are the metric coefficients of $g$ with respect to the basis $\{X_a,V_i\}$.
 
One easily verifies that $\Xi^G$ satisfies 
$$ 
\iota_{\redvf}\Xi^G=\alphabar.
$$
In fact, we could have taken any other  SODE than $\redvf$ on $Q/G$, and still have  obtained  that property.

\subsection{Relation to projective geodesic extensions on the reduced manifold}

We can now use these tensor fields to give an equivalent condition for $\varphi$-simplicity. The condition (ii) below appears already in some way in \cite{Cantrijn,Garcia,ohsawa}. With Proposition~\ref{phi-simpl}, we prove that it is in fact, nothing more than the $\varphi$-simplicity property in hidden form.

 
 \begin{prop} \label{phi-simpl}
    Given $\varphi\in \mathcal{C}^\infty(Q/G)$,  the following statements are equivalent:
    \begin{enumerate}
    \item[(i)] $\varphi\in \mathcal{C}^\infty(Q/G)$ satisfies the $\varphi$-simplicity property (\ref{phisimpleprop}),
        \item[(ii)] $\varphi\in \mathcal{C}^\infty(Q/G)$ satisfies $d\varphi\wedge\theta_l=\Xi^G$.
         \end{enumerate}
\end{prop}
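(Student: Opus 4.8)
The plan is to prove the equivalence by unwinding both sides into their local coordinate expressions with respect to the standard frame $\left\{\frac{\partial}{\partial q^a}\right\}$ on $Q/G$ and the associated frame on $T(Q/G)$, and then comparing coefficients. Since both $d\varphi\wedge\theta_l$ and $\Xi^G$ are semi-basic $2$-forms on $T(Q/G)$ that are linear in the velocities $\dot q^a$, the identity in (ii) is equivalent to an identity among their coefficient functions, which are built from $\frac{\partial\varphi}{\partial q^a}$, $g_{ab}$, and $R^j_{bc}G_{ja}$. So the whole statement reduces to a purely algebraic comparison, once the coordinate expressions are in hand.

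First I would write $\theta_l = g_{ab}\dot q^a dq^b$ (the reduced Poincar\'e--Cartan form, recalled just before equation~(\ref{symplectic})) and compute
$$
d\varphi\wedge\theta_l = \frac{\partial\varphi}{\partial q^c}\,g_{ab}\,\dot q^a\, dq^c\wedge dq^b
= \tfrac12\left(\frac{\partial\varphi}{\partial q^c}g_{ab} - \frac{\partial\varphi}{\partial q^b}g_{ac}\right)\dot q^a\, dq^c\wedge dq^b .
$$
On the other side, the excerpt already gives $\Xi^G = -\tfrac12 R^j_{bc}G_{ja}\dot q^a\, dq^b\wedge dq^c$. Matching the coefficients of $\dot q^a\, dq^b\wedge dq^c$ (antisymmetrised in $b,c$), the equation $d\varphi\wedge\theta_l=\Xi^G$ becomes, after relabelling indices,
$$
\frac{\partial\varphi}{\partial q^b}g_{ac} - \frac{\partial\varphi}{\partial q^c}g_{ab} = -R^j_{bc}G_{ja}.
$$
Now I would invoke the useful identity (\ref{useful}), $g_{ce}R^c_{ab}=G_{ej}R^j_{ab}$, i.e.\ $G_{ja}R^j_{bc} = g_{ae}R^e_{bc}$, to rewrite the right-hand side as $-g_{ae}R^e_{bc}$. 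Since $g^{red}$ is a metric, $g_{ae}$ is invertible, so this last equation is equivalent to
$$
R^e_{bc} = -\frac{\partial\varphi}{\partial q^b}\delta^e_c + \frac{\partial\varphi}{\partial q^c}\delta^e_b ,
$$
which is exactly the $\varphi$-simplicity property (\ref{phisimpleprop}) (up to swapping the names $b\leftrightarrow c$, which only flips an overall sign on both sides). This closes the chain of equivalences (i)$\Leftrightarrow$(ii).

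The only genuine subtlety — not so much an obstacle as a point requiring care — is the bookkeeping of antisymmetrisation and index placement: both $d\varphi\wedge\theta_l$ and $\Xi^G$ are $2$-forms, so their coefficient tensors are only determined modulo the symmetric part in the wedge indices, and one must be consistent about whether the free index $a$ sits on $\delta^e_b$ or $\delta^e_c$. I would handle this by contracting the candidate identity (ii) with a pair of basis vector fields $\frac{\partial}{\partial q^b}, \frac{\partial}{\partial q^c}$ right at the start, turning the form identity into a scalar identity for each $(b,c)$, and only then manipulate indices. One should also note explicitly, for rigour, that $R^j_{bc}G_{ja}$ and $g_{ab}$ descend to well-defined functions on $Q/G$ (the former by the invariance argument already given in the excerpt for $\Xi^G$, the latter by (\ref{gred})), so that the comparison takes place honestly on $Q/G$; this is already established earlier and can simply be cited.
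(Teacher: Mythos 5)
Your proposal is correct and follows essentially the same route as the paper's own proof: write both $2$-forms in the coordinate frame, equate the (antisymmetrised) coefficients, convert $R^j_{bc}G_{ja}$ into $g_{ad}R^d_{bc}$ via the identity (\ref{useful}), and strip off the invertible factor $g_{ad}$ to land on (\ref{phisimpleprop}). The extra care you take with antisymmetrisation and invariance is sound but adds nothing beyond what the paper already does implicitly.
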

\begin{proof} Since $d\varphi = \displaystyle\frac{\partial \varphi}{\partial q^b} dq^b$ and $\theta_l = g_{ac}{\dot q}^a dq^c$ we have, because of the identity (\ref{useful}),
\begin{eqnarray*}
        d\varphi\wedge\theta_l=\Xi^G&\Leftrightarrow& \frac{\partial \varphi}{\partial q^b}g_{ac}-\frac{\partial \varphi}{\partial q^c}g_{ab}+ R^k_{bc} G_{ka}=0\\
                 &\Leftrightarrow& \frac{\partial \varphi}{\partial q^b}g_{ac}-\frac{\partial \varphi}{\partial q^c}g_{ab}+R_{bc}^d g_{da}=0
                 \\
                 &\Leftrightarrow& \left[\frac{\partial \varphi}{\partial q^b}\delta^d_{c}-\frac{\partial \varphi}{\partial q^c}\delta^d_b-R_{cb}^d \right]g_{da}=0
    \end{eqnarray*}
The expression between $\left[\cdots\right]$ leads to the $\varphi$-simplicity property (\ref{phisimpleprop}).
\end{proof} 

The above result is also mentioned in the proof of Proposition~3.2 of \cite{Simoes}.



We now introduce a new 2-form $\gamma$, for an arbitrary couple $({\overline g}_{ai},\nphi = \nvarphi\circ\pi)$:
$$ 
\gamma=\gamma_{bca}\dot{q}^a dq^b\wedge dq^c\quad\textrm{with}\quad \gamma_{bca}:=\frac{1}{6}(\overline{g}_{bk}R^k_{ac}-\overline{g}_{ck}R^k_{ab}+2\overline{g}_{ak}R^k_{bc}).
$$
This 2-form allows us to give an equivalent condition for $(A')$, that is quite similar to the expression in Proposition~\ref{phi-simpl}.

\begin{prop} \label{lem:equiv}
The following conditions are equivalent:
 \begin{enumerate}
     \item[(i)] The couple $({\overline g}_{ai},\nphi = \nvarphi\circ\pi)$ satisfies condition $(A')$.
     \item[(ii)] The couple $({\overline g}_{ai},\nphi = \nvarphi\circ\pi)$ satisfies 
     $d\nvarphi\wedge \theta_l=\gamma.$
     \end{enumerate}
\end{prop}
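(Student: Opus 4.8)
The plan is to compare the reduced form of condition $(A')$, namely equation~(\ref{Aprimered}), with the coordinate expression of the identity $d\nvarphi\wedge\theta_l=\gamma$, and show that, after contraction with $\dot q^a$ (or equivalently after symmetrization), the two are the same set of equations. First I would write out both sides. We have $d\nvarphi = \displaystyle\frac{\partial\nvarphi}{\partial q^b}dq^b$ and $\theta_l = g_{ac}\dot q^a\,dq^c$, so
$$
d\nvarphi\wedge\theta_l = \left(\frac{\partial\nvarphi}{\partial q^b}g_{ac} - \frac{\partial\nvarphi}{\partial q^c}g_{ab}\right)\dot q^a\, dq^b\wedge dq^c,
$$
where I would be careful that the coefficient must be understood as its antisymmetrization in $b,c$. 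On the other side, $\gamma = \gamma_{bca}\dot q^a\,dq^b\wedge dq^c$ with $\gamma_{bca} = \frac16(\overline{g}_{bk}R^k_{ac} - \overline{g}_{ck}R^k_{ab} + 2\overline{g}_{ak}R^k_{bc})$. So the statement $d\nvarphi\wedge\theta_l=\gamma$ is equivalent to the vanishing, for all $a$ and for all $b<c$, of
$$
\left(\frac{\partial\nvarphi}{\partial q^b}g_{ac} - \frac{\partial\nvarphi}{\partial q^c}g_{ab}\right)\Big|_{[bc]} - \gamma_{bca}\big|_{[bc]}.
$$

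Next I would carry out the bookkeeping of the antisymmetrization. Since $dq^b\wedge dq^c$ is antisymmetric, only the $b\leftrightarrow c$ antisymmetric part of the coefficient of $\dot q^a$ survives; the first term $\frac{\partial\nvarphi}{\partial q^b}g_{ac} - \frac{\partial\nvarphi}{\partial q^c}g_{ab}$ is already antisymmetric in $b,c$, and one checks that $\gamma_{bca}$, after antisymmetrizing in $b,c$, becomes $\frac16\big[(\overline{g}_{bk}R^k_{ac} - \overline{g}_{ck}R^k_{ab}) - (\overline{g}_{ck}R^k_{ab} - \overline{g}_{bk}R^k_{ac}) + 2\overline{g}_{ak}(R^k_{bc} - R^k_{cb})\big] = \frac13(\overline{g}_{bk}R^k_{ac} - \overline{g}_{ck}R^k_{ab}) + \frac13\overline{g}_{ak}(R^k_{bc}-R^k_{cb})$. (Here I may need a lemma, or a direct verification, that $R^k_{bc}$ is antisymmetric in $b,c$, which follows from $R^k_{bc}=B^k_{bc}$ being curvature components; and that $R^d_{bc}=-R^d_{cb}$, which follows from $R^d_{bc}=-K^d_jB^j_{bc}$. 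Both are available from the bracket relations of Section~\ref{prelimChap}.) Using this antisymmetry, the $\overline{g}_{ak}(R^k_{bc}-R^k_{cb})$ term is $2\overline{g}_{ak}R^k_{bc}$, so the antisymmetrized $\gamma$-coefficient is $\frac13(\overline{g}_{bk}R^k_{ac} - \overline{g}_{ck}R^k_{ab} + 2\overline{g}_{ak}R^k_{bc})$.

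Then I would reconcile this with (\ref{Aprimered}). Condition $(A')$ in reduced form, after stripping $v^av^b$ via symmetrization in $a,b$, reads $\overline{g}_{bk}R^k_{ac} + \overline{g}_{ak}R^k_{bc} - g_{bc}\partial_a\nvarphi - g_{ac}\partial_b\nvarphi + 2g_{ab}\partial_c\nvarphi = 0$; since this holds for the symmetric part in $a,b$ only, the genuine content is a tensor symmetric in $a,b$. The plan is to show that this symmetric-in-$(a,b)$ identity is equivalent to the antisymmetric-in-$(b,c)$ identity $d\nvarphi\wedge\theta_l=\gamma$, by the standard ``cyclic sum'' trick: given a $3$-index expression $S_{abc}$ symmetric in $a,b$ (resp. antisymmetric in $b,c$) whose total symmetrization/relevant contraction matches, one recovers one from the other by forming $S_{abc}\pm S_{bca}\pm S_{cab}$. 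Concretely, I would contract $(A')$'s reduced identity with $\dot q^a\dot q^b$ and also the $\gamma$-identity with $\dot q^a\dot q^b\dot q^c$ and observe both reduce to the single scalar relation obtained already along the nonholonomic trajectories; then argue that since $(A')$ is the ``symmetric part'' and $\gamma$ the ``antisymmetric part'' of one and the same $3$-tensor built from $\overline{g}_{bk}R^k_{ac}$ and $g\otimes d\nvarphi$, they carry equivalent information. The main obstacle I anticipate is precisely this index combinatorics: keeping straight which symmetry is imposed by the $v^av^b$ contraction in $(A')$ versus the $dq^b\wedge dq^c$ antisymmetry in $\gamma$, and verifying that the cyclic recombination (with the factor $\frac16$ in $\gamma_{bca}$ chosen exactly so the arithmetic closes) genuinely produces an ``if and only if'' rather than only one implication. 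Everything else — the curvature antisymmetries, the formula for $\theta_l$, the identity (\ref{useful}) — is routine and already in the excerpt.
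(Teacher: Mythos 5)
Your setup is the same as the paper's: both sides are written in coordinates, the antisymmetrization of $\gamma_{bca}$ in $b,c$ is carried out correctly (using $R^k_{cb}=-R^k_{bc}$ it gives $\tfrac13(\overline{g}_{bk}R^k_{ac}-\overline{g}_{ck}R^k_{ab}+2\overline{g}_{ak}R^k_{bc})$, exactly the component form the paper obtains for $\gamma(\partial/\partial q^b,\partial/\partial q^c)$), and you correctly identify that the whole issue is passing between an identity symmetric in $(a,b)$ and one antisymmetric in $(b,c)$. However, the decisive step is not actually carried out, and the one concrete bridge you propose would fail: contracting the $\gamma$-identity with $\dot q^a\dot q^b\dot q^c$ gives $0=0$ identically, since $d\nvarphi\wedge\theta_l$ and $\gamma$ are $2$-forms and vanish on symmetric arguments, so ``both reduce to the single scalar relation'' carries no information and cannot establish either implication. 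Likewise, the general principle that the symmetric and antisymmetric parts of ``one and the same $3$-tensor'' carry equivalent information is false; the equivalence here depends on the specific structure of the two pieces, and you flag this yourself as ``the main obstacle'' without resolving it.

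What is missing is precisely the pair of explicit recombinations that the paper performs. For (i)$\Rightarrow$(ii): contract the reduced $(A')$, equation (\ref{Aprimered}), with a single $\dot q^a$, write down the same identity with $b$ and $c$ interchanged, and subtract; using $R^k_{cb}=-R^k_{bc}$ the difference is $3$ times the component identity of $d\nvarphi\wedge\theta_l=\gamma$. For (ii)$\Rightarrow$(i): strip the factor $\dot q^a$ (legitimate since the identity is linear in $\dot q^a$ and holds for all $\dot q^a$), write down the same identity with $a$ and $b$ interchanged, and add; the $\overline{g}_{ck}$-terms cancel by the antisymmetry of $R^k_{ab}$ and one recovers (\ref{Aprimered}) on the nose. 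These two five-line computations are the content of the proposition; without them your argument is a plan rather than a proof, and the factor $\tfrac16$ in $\gamma_{bca}$ is exactly what makes the arithmetic close in both directions.
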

\begin{proof}
Suppose first that condition $(A')$ is satisfied (in its reduced form (\ref{Aprimered})). 
Then,  also 
$$
\left(\overline{g}_{bk}R^k_{ac}+\overline{g}_{ak}R^k_{bc}-g_{bc}\frac{\partial\nvarphi}{\partial q^a}-g_{ac}\frac{\partial\nvarphi}{\partial q^b}+2g_{ab}\frac{\partial\nvarphi}{\partial q^c}\right)\dot{q}^a=0.
$$
In the above, we may interchange the indices $b$ and $c$. This gives
$$
\left(\overline{g}_{ck}R^k_{ab}+\overline{g}_{ak}R^k_{cb}-g_{cb}\frac{\partial\nvarphi}{\partial q^a}-g_{ab}\frac{\partial\nvarphi}{\partial q^c}+2g_{ac}\frac{\partial\nvarphi}{\partial q^b}\right)\dot{q}^a=0. 
$$
When we subtract these two equations we get 
$$\left(\overline{g}_{bk}R^k_{ac}-\overline{g}_{ck}R^k_{ab}+2\overline{g}_{ak}R^k_{bc}-3g_{ac}\frac{\partial \nvarphi}{\partial q^b}+3g_{ab}\frac{\partial \nvarphi}{\partial q^c}\right)\dot{q}^a=0 $$
and therefore also $$\left(g_{ac}\frac{\partial \nvarphi}{\partial q^b}-g_{ab}\frac{\partial \nvarphi}{\partial q^c}\right)\dot{q}^a=\frac{1}{3}\left(\overline{g}_{bk}R^k_{ac}-\overline{g}_{ck}R^k_{ab}+2\overline{g}_{ak}R^k_{bc}\right)\dot{q}^a.$$
This expression coincides with the components of equation  $d\varphi\wedge\theta_l=\gamma$. Indeed,
\begin{eqnarray*}
    d\nvarphi\wedge\theta_l\left(\frac{\partial}{\partial q^b},\frac{\partial}{\partial q^c}\right)&=&\gamma\left(\frac{\partial}{\partial q^b},\frac{\partial}{\partial q^c}\right)\\
   \left( \frac{\partial \nvarphi}{\partial q^b}g_{ac}-\frac{\partial \nvarphi}{\partial q^c}g_{ab}\right)\dot{q}^a&=&\left(\gamma_{bca}\dot{q}^a-\gamma_{cba}\dot{q}^a \right) \\
      \left( \frac{\partial \nvarphi}{\partial q^b}g_{ac}-\frac{\partial \nvarphi}{\partial q^c}g_{ab}\right)\dot{q}^a&=&\frac{1}{6}(2\overline{g}_{bk}R^k_{ac}-2\overline{g}_{ck}R^k_{ab}+4\overline{g}_{ak}R^k_{bc} )\dot{q}^a\\
        \left( \frac{\partial \varphi}{\partial q^b}g_{ac}-\frac{\partial \varphi}{\partial q^c}g_{ab}\right)\dot{q}^a&=&\frac{1}{3}(\overline{g}_{bk}R^k_{ac}-\overline{g}_{ck}R^k_{ab}+2\overline{g}_{ak}R^k_{bc} )\dot{q}^a.
\end{eqnarray*}

For the other direction, if  $d\varphi\wedge \theta_l=\gamma$ is satisfied, then for all $\dot{q}^a$, we have 
$$ 
\left( \frac{\partial \nvarphi}{\partial q^b}g_{ac}-\frac{\partial \nvarphi}{\partial q^c}g_{ab}\right)\dot{q}^a=\frac{1}{3}(\overline{g}_{bk}R^k_{ac}-\overline{g}_{ck}R^k_{ab}+2\overline{g}_{ak}R^k_{bc} )\dot{q}^a.
$$
After canceling the factor $\dot{q}^a$ on both sides this becomes 
$$
\frac{\partial \nvarphi}{\partial q^b}g_{ac}-\frac{\partial \nvarphi}{\partial q^c}g_{ab}=\frac{1}{3}(\overline{g}_{bk}R^k_{ac}-\overline{g}_{ck}R^k_{ab}+2\overline{g}_{ak}R^k_{bc}).
$$
Now, if we interchange the indices $a$ and $b$, we get 
$$
\frac{\partial \nvarphi}{\partial q^a}g_{bc}-\frac{\partial \nvarphi}{\partial q^c}g_{ba}=\frac{1}{3}(\overline{g}_{ak}R^k_{bc}-\overline{g}_{ck}R^k_{ba}+2\overline{g}_{bk}R^k_{ac}).
$$
After adding up both equations, we obtain 
$$
g_{bc}\frac{\partial\nvarphi}{\partial q^a}+g_{ac}\frac{\partial\nvarphi}{\partial q^b}-2g_{ab}\frac{\partial\nvarphi}{\partial q^c}=\overline{g}_{bk}R^k_{ac}+\overline{g}_{ak}R^k_{bc},
$$
and this is condition $(A')$. 
\end{proof}




Now, we consider the special case of a $\ort$-ortogonal projective geodesic extension, that is, the metric coefficients $\overline{g}_{ak}$ are such that  $\overline{g}_{kb}=-G_{kb}$. If we insert this in $(A')$, what remains is a condition on  $\nvarphi\in \mathcal{C}^\infty(Q/G)$ only. As mentioned before, we include this in the notation for clarity: 
\begin{equation}\label{AprimeredG}
(A')^G\quad\Leftrightarrow\quad G_{bk}R^k_{ac}+G_{ak}R^k_{bc}+g_{bc}\frac{\partial\nvarphi}{\partial q^a}+g_{ac}\frac{\partial\nvarphi}{\partial q^b}-2g_{ab}\frac{\partial\nvarphi}{\partial q^c}=0.
\end{equation}
The importance of this substitution is that we know from Corollary~\ref{corollary} that the condition $(A')^G$ is equivalent with the property that the couple $(\overline{g}_{ai}=-G_{ai},\nphi=\nvarphi\circ\pi)$ is a projective geodesic extension.

Likewise, we may set $\gamma^G=\gamma^G_{bca}\dot{q}^a dq^b\wedge dq^c$, with
$$ 
 \gamma^G_{bca}=\frac{1}{6}(-G_{bk}R^k_{ac}+G_{ck}R^k_{ab}-2G_{ak}R^k_{bc})= \frac{1}{6}(-G_{bd}R^d_{ac}+G_{cd}R^d_{ab}-2G_{ad}R^d_{bc}),
$$
in view of the identity (\ref{useful}). 


Of course, the following corollary trivially holds.
\begin{cor} \label{lem5}
Given  $\nvarphi\in\mathcal{C}^\infty(Q/G)$, the following statements are equivalent:
 \begin{enumerate}
     \item[(i)] $\nvarphi$ satisfies condition $(A')^G$ (i.e.\ the couple $(\overline{g}_{ai}=-G_{ai},\nphi=\nvarphi\circ\pi)$ is a $\ort$-orthogonal projective geodesic extension).
     \item[(ii)] $\nvarphi$ satisfies 
     $d\nvarphi\wedge \theta_l={\gamma}^G$.
 \end{enumerate}
\end{cor}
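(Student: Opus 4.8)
The plan is to obtain Corollary~\ref{lem5} directly as a special case of Proposition~\ref{lem:equiv}. Recall that Proposition~\ref{lem:equiv} asserts, for an arbitrary couple $({\overline g}_{ai},\nphi = \nvarphi\circ\pi)$, the equivalence of condition $(A')$ with the 2-form identity $d\nvarphi\wedge\theta_l=\gamma$, where $\gamma_{bca}=\frac16(\overline{g}_{bk}R^k_{ac}-\overline{g}_{ck}R^k_{ab}+2\overline{g}_{ak}R^k_{bc})$. The idea is simply to substitute $\overline{g}_{ai}=-G_{ai}$ everywhere in that statement.

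First I would note that, by definition, condition $(A')^G$ is nothing but condition $(A')$ after the substitution $\overline{g}_{ai}=-G_{ai}$; this is exactly how $(A')^G$ was introduced in~(\ref{AprimeredG}). Likewise, $\gamma^G$ is by construction the 2-form $\gamma$ evaluated at $\overline{g}_{ai}=-G_{ai}$: indeed $\gamma^G_{bca}=\frac16(-G_{bk}R^k_{ac}+G_{ck}R^k_{ab}-2G_{ak}R^k_{bc})$ is precisely $\gamma_{bca}$ with each $\overline{g}_{\bullet k}$ replaced by $-G_{\bullet k}$. Hence the equivalence $(A')\Leftrightarrow (d\nvarphi\wedge\theta_l=\gamma)$ of Proposition~\ref{lem:equiv}, specialized to this choice of $\overline{g}_{ai}$, reads $(A')^G\Leftrightarrow (d\nvarphi\wedge\theta_l=\gamma^G)$, which is exactly statement (i)$\Leftrightarrow$(ii) of the corollary. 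Finally, the parenthetical remark in (i) — that $\nvarphi$ satisfying $(A')^G$ is the same as the couple $(\overline{g}_{ai}=-G_{ai},\nphi=\nvarphi\circ\pi)$ being a $\ort$-orthogonal projective geodesic extension — is precisely the content of Corollary~\ref{corollary} together with the definition of $\ort$-orthogonality, so nothing further needs to be checked.

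There is essentially no obstacle here: the corollary is, as the text itself says, "trivially" true, being a verbatim substitution into an already-proven proposition. The only point that deserves a line of justification is that $\gamma^G$ as defined in the text coincides with $\gamma|_{\overline{g}=-G}$, and that the two displayed forms of $\gamma^G_{bca}$ agree — but this last equality is immediate from the identity~(\ref{useful}), $g_{ce}R^c_{ab}=G_{ej}R^j_{ab}$, which lets one pass freely between $G_{\bullet k}R^k_{\bullet\bullet}$ and $G_{\bullet d}R^d_{\bullet\bullet}$. A one-sentence proof suffices: apply Proposition~\ref{lem:equiv} with $\overline{g}_{ai}=-G_{ai}$, recognize $(A')^G$ and $\gamma^G$ as the corresponding specializations, and invoke Corollary~\ref{corollary} for the reformulation of (i).
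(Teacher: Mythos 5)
Your proposal is correct and matches the paper's intent exactly: the paper offers no separate proof, stating only that the corollary ``trivially holds'' as the specialization of Proposition~\ref{lem:equiv} to $\overline{g}_{ai}=-G_{ai}$, which is precisely the substitution you carry out (together with the appeal to Corollary~\ref{corollary} for the parenthetical reformulation of (i)). Nothing is missing.
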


For what follows, it is important to realize that $\Xi^G \neq \gamma^G$. In order to write down the relation between these two forms, we first rewrite $\gamma$ in the following way:
\begin{eqnarray*}
    \gamma
    &=& \frac{1}{6}(\overline{g}_{bk}R^k_{ac}-\overline{g}_{ck}R^k_{ab}+2\overline{g}_{ak}R^k_{bc})\dot{q}^a dq^b\wedge dq^c\\
    &=& \frac{1}{6}\sum_{b<c}(\overline{g}_{bk}R^k_{ac}-\overline{g}_{ck}R^k_{ab}+2\overline{g}_{ak}R^k_{bc})\dot{q}^a dq^b\wedge dq^c+ \frac{1}{6}\sum_{c<b}(\overline{g}_{bk}R^k_{ac}-\overline{g}_{ck}R^k_{ab}+2\overline{g}_{ak}R^k_{bc})\dot{q}^a dq^b\wedge dq^c\\
    &=&\frac{1}{6}\sum_{b<c}(\overline{g}_{bk}R^k_{ac}-\overline{g}_{ck}R^k_{ab}+2\overline{g}_{ak}R^k_{bc})\dot{q}^a dq^b\wedge dq^c+ \frac{1}{6}\sum_{b<c}(\overline{g}_{ck}R^k_{ab}-\overline{g}_{bk}R^k_{ac}+2\overline{g}_{ak}R^k_{cb})\dot{q}^a dq^c\wedge dq^b\\
    &=& \frac{1}{3}\sum_{b<c}(\overline{g}_{bk}R^k_{ac}-\overline{g}_{ck}R^k_{ab}+2\overline{g}_{ak}R^k_{bc})\dot{q}^a dq^b\wedge dq^c
\end{eqnarray*}
In particular we get that 
\begin{eqnarray*}
    \gamma^G&=&\frac{1}{3}\sum_{b<c}(-G_{bk}R^k_{ac}+G_{ck}R^k_{ab}-2G_{ak}R^k_{bc})\dot{q}^a dq^b\wedge dq^c.
\end{eqnarray*}
On the other hand, we have
\begin{eqnarray*}
    \Xi^G&=&-\frac{1}{2} R^k_{bc} G_{ka} \dot{q}^a dq^b\wedge dq^c= -\sum_{b<c} R^k_{bc} G_{ka} \dot{q}^a dq^b\wedge dq^c= \sum_{b<c} R^k_{cb} G_{ka} \dot{q}^a dq^b\wedge dq^c
\end{eqnarray*}

\begin{lem} 
\label{ervoor}
\begin{eqnarray*}
\gamma^G=\Xi^G & \Leftrightarrow & G_{bk}R^k_{ca}+G_{ck}R^k_{ab}+G_{ak}R_{bc}^k=0 \quad \Leftrightarrow \quad \sum_{cycl: X,Y,Z}g(X,\tilde{\mathcal Q}[Y,Z]) =0\\ &\Leftrightarrow &
 g_{bd}R^d_{ca}+g_{cd}R^d_{ab}+g_{ad}R_{bc}^d=0 \qquad
 \Leftrightarrow \quad \sum_{cycl: \tilde X,\tilde Y,\tilde Z}g^{red}(\tilde X,{\mathcal T}(\tilde Y,\tilde Z)) =0,
\end{eqnarray*}
$\forall\, X,Y,Z\in{\rm Sec}(\mathcal{D})$ and $\forall\, \tilde X, \tilde Y, \tilde Z\in\mathcal{X}(Q/G)$.
\end{lem}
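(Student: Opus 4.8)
The plan is to establish the chain of equivalences in Lemma~\ref{ervoor} by showing that each of the five conditions is a coordinate-free or index reformulation of a single underlying symmetry: the vanishing of the cyclic sum of $g_{ad}R^d_{bc}$ over the three horizontal indices. I would start from the two explicit $2$-form expressions that were just derived above the statement,
$$
\gamma^G=\frac{1}{3}\sum_{b<c}\left(-G_{bk}R^k_{ac}+G_{ck}R^k_{ab}-2G_{ak}R^k_{bc}\right)\dot{q}^a\,dq^b\wedge dq^c,
\qquad
\Xi^G=\sum_{b<c}R^k_{cb}G_{ka}\,\dot{q}^a\,dq^b\wedge dq^c,
$$
and simply equate coefficients. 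Because both forms are of the type (something)${}_{bca}\dot q^a\,dq^b\wedge dq^c$ with the $b\leftrightarrow c$ antisymmetry already built in, $\gamma^G=\Xi^G$ holds if and only if the coefficient of each $\dot q^a\,dq^b\wedge dq^c$ with $b<c$ agrees, i.e.
$$
\tfrac13\left(-G_{bk}R^k_{ac}+G_{ck}R^k_{ab}-2G_{ak}R^k_{bc}\right)=R^k_{cb}G_{ka}=-R^k_{bc}G_{ka}.
$$
Multiplying by $3$ and moving everything to one side gives $-G_{bk}R^k_{ac}+G_{ck}R^k_{ab}-2G_{ak}R^k_{bc}+3G_{ak}R^k_{bc}=0$, that is $-G_{bk}R^k_{ac}+G_{ck}R^k_{ab}+G_{ak}R^k_{bc}=0$; using $R^k_{ac}=-R^k_{ca}$ this is exactly $G_{bk}R^k_{ca}+G_{ck}R^k_{ab}+G_{ak}R^k_{bc}=0$, the first stated equivalent. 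This handles the first $\Leftrightarrow$.

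Next I would pass from the $G_{\bullet k}R^k_{\bullet\bullet}$ form to the $g_{\bullet d}R^d_{\bullet\bullet}$ form. This is an immediate application of the identity~(\ref{useful}), $g_{ce}R^c_{ab}=G_{ej}R^j_{ab}$: each term $G_{ak}R^k_{bc}$ equals $g_{ad}R^d_{bc}$ (renaming the dummy index), and likewise for the other two terms, so $G_{bk}R^k_{ca}+G_{ck}R^k_{ab}+G_{ak}R^k_{bc}=0$ is literally the same equation as $g_{bd}R^d_{ca}+g_{cd}R^d_{ab}+g_{ad}R^d_{bc}=0$. This gives the equivalence with the fourth displayed condition essentially for free.

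For the two intrinsic reformulations, I would unwind the definitions of the tensors $\mathcal T$ and $\tilde{\mathcal Q}$ recorded earlier. Taking $X=X_a$, $Y=X_b$, $Z=X_c$ (a basis of sections of $\mathcal D$), we have $\tilde{\mathcal Q}[X_b,X_c]=B^k_{bc}V_k$ by the bracket relations in Section~\ref{prelimChap}, so $g(X_a,\tilde{\mathcal Q}[X_b,X_c])=B^k_{bc}G_{ak}=R^k_{bc}G_{ak}$ (recall $R^k_{bc}=B^k_{bc}$); hence $\sum_{\mathrm{cycl}}g(X,\tilde{\mathcal Q}[Y,Z])=0$ is the same as $G_{ak}R^k_{bc}+G_{bk}R^k_{ca}+G_{ck}R^k_{ab}=0$, which is the first stated equivalent — so cyclic symmetry in the frame gives cyclic symmetry for all sections by tensoriality. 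Similarly, from~(\ref{gyrtens}) we have $\mathcal T(\partial/\partial q^b,\partial/\partial q^c)=R^d_{bc}X_d$ up to $T\pi$, whence $g^{red}(\partial/\partial q^a,\mathcal T(\partial/\partial q^b,\partial/\partial q^c))=R^d_{bc}g^{red}_{ad}=R^d_{bc}g_{ad}$ using~(\ref{gred}); cyclic summation then matches the fourth displayed condition. The only mild subtlety — and the point I would be most careful about — is bookkeeping of antisymmetry and index placement: one must keep track of the sign conventions (our curvature convention, the sign in $[V_i,V_j]=-C^k_{ij}V_k$ is irrelevant here, but the $R^k_{bc}=-R^k_{cb}$ antisymmetry is used repeatedly) and make sure the ``$b<c$'' summation in the $2$-forms is correctly converted to free antisymmetric index equations. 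Once the first equivalence is pinned down, the remaining three are purely a matter of rewriting via~(\ref{useful}), (\ref{gyrtens}) and the definition of $\tilde{\mathcal Q}$, with no real obstacle.
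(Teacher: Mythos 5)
Your proof is correct and follows essentially the same route as the paper: equate the $b<c$ coefficients of the two explicit expansions of $\gamma^G$ and $\Xi^G$ (using the antisymmetry $R^k_{bc}=-R^k_{cb}$), pass between the $G_{\bullet k}R^k_{\bullet\bullet}$ and $g_{\bullet d}R^d_{\bullet\bullet}$ forms via the identity~(\ref{useful}), and obtain the intrinsic reformulations by evaluating on the frames using $\tilde{\mathcal Q}[X_b,X_c]=R^k_{bc}V_k$ and~(\ref{gyrtens}). The one point you assert rather than verify is the tensoriality of $(X,Y,Z)\mapsto g(X,\tilde{\mathcal Q}[Y,Z])$, which the paper justifies in one line by noting that the Leibniz term $Y(f)\,g(X,\tilde{\mathcal Q}(Z))$ vanishes because $\tilde{\mathcal Q}$ annihilates sections of $\mathcal D$.
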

\begin{proof}
The equivalences in coordinates follow from the two expressions of $\gamma^G$ and $\Xi^G$ and from the identity (\ref{useful}). For the others, remark first that the desired property in the first line is tensorial in $X,Y$ and $Z$, because for a function $f$ on $Q$:
$$
g(X,\tilde{\mathcal Q}[Y,fZ]) = f\,g(X,\tilde{\mathcal Q}[Y,Z]) + Y(f)\, g(X,\tilde{\mathcal Q}(Z)) =  f\,g(X,\tilde{\mathcal Q}[Y,Z])+0.
$$
For this reason, we only need to verify it on the frame $\{X_a\}$ of ${\rm Sec}(\mathcal D)$. Since $[X_b,X_c]=B_{bc}^kV_k = R^k_{bc}V_k$, we have $\tilde{\mathcal Q}
([X_b,X_c])=R^k_{bc}V_k$, from which the rest of the expression easily follows. For the second line, recall that $R^d_{ab}$ are the components of the gyroscopic tensor $\mathcal T$ with respect to the coordinate basis.
\end{proof}

Any of equivalent the properties in Lemma~\ref{ervoor} can now be related to $\varphi$-simplicity.
\begin{lem} \label{cyclic}
    If the Chaplygin system is $\varphi$-simple, then 
    $$
    g_{bd}R^d_{ca}+g_{cd}R^d_{ab}+g_{ad}R_{bc}^d=0.
    $$
\end{lem}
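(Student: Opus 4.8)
The plan is to plug the $\varphi$-simplicity property directly into the left-hand side of the claimed identity and watch the terms cancel in pairs. Recall that $\varphi$-simplicity (in the form (\ref{phisimpleprop})) says $R^d_{ab}=-\partial_a\varphi\,\delta^d_b+\partial_b\varphi\,\delta^d_a$, where I abbreviate $\partial_a\varphi:=\partial\varphi/\partial q^a$. First I would compute $g_{bd}R^d_{ca}$ by substituting: $g_{bd}(-\partial_c\varphi\,\delta^d_a+\partial_a\varphi\,\delta^d_c)=-\partial_c\varphi\,g_{ba}+\partial_a\varphi\,g_{bc}$. Doing the same for the two cyclic permutations gives $g_{cd}R^d_{ab}=-\partial_a\varphi\,g_{cb}+\partial_b\varphi\,g_{ca}$ and $g_{ad}R^d_{bc}=-\partial_b\varphi\,g_{ac}+\partial_c\varphi\,g_{ab}$.

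Then I would add the three expressions and group by the derivative factor. The $g_{ab}$-type terms are $\partial_a\varphi\,g_{bc}$ (from the first) and $-\partial_a\varphi\,g_{cb}$ (from the second), which cancel because $g$ is symmetric; similarly the $\partial_b\varphi$ terms cancel between the second and third, and the $\partial_c\varphi$ terms cancel between the first and third. Hence the cyclic sum vanishes, which is exactly the stated conclusion. Alternatively, and perhaps more cleanly for exposition, I could invoke the coordinate-free characterization in Lemma~\ref{ervoor}: $\varphi$-simplicity means $\mathcal{T}=-d\varphi\otimes\mathrm{id}+\mathrm{id}\otimes d\varphi$, so $g^{red}(\tilde X,\mathcal{T}(\tilde Y,\tilde Z))=-d\varphi(\tilde Y)\,g^{red}(\tilde X,\tilde Z)+d\varphi(\tilde Z)\,g^{red}(\tilde X,\tilde Y)$, and cyclic summation of this over $\tilde X,\tilde Y,\tilde Z$ cancels by the symmetry of $g^{red}$; by Lemma~\ref{ervoor} this is equivalent to the coordinate identity claimed.

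There is really no hard part here: the statement is an immediate algebraic consequence of $\varphi$-simplicity together with the symmetry of the metric, and it is essentially the observation that a tensor of the special ``$-d\varphi\otimes\mathrm{id}+\mathrm{id}\otimes d\varphi$'' shape has vanishing cyclic trace against any symmetric bilinear form. The only thing to be slightly careful about is bookkeeping the index placement and making sure one uses $R^d_{ab}$ (not $R^d_{ba}$) consistently with the convention fixed in (\ref{phisimpleprop}); since $R^d_{ab}$ is antisymmetric in $a,b$ up to the sign conventions already set, this is routine. I would therefore present the proof as a three-line substitution-and-cancellation, optionally remarking that it also follows from Lemma~\ref{ervoor} combined with Proposition~\ref{phi-simpl}.
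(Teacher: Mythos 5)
Your proof is correct and takes essentially the same route as the paper: both substitute the $\varphi$-simplicity expression for $R^d_{ab}$ into the three terms of the cyclic sum and cancel pairwise using the symmetry of $g$ (the paper merely organizes the substitution as index interchanges of a single equation and forms $eq1-eq2+eq3$). The coordinate-free remark via the gyroscopic tensor is a nice optional addition but not needed.
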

\begin{proof}
 Suppose that the $\varphi$-simplicity property (\ref{phisimpleprop}) is satisfied, that is 
 $$
 R_{ab}^d=-\frac{\partial\varphi}{\partial q^a}\delta_b^d+\frac{\partial \varphi}{\partial q^b}\delta_a^d. 
 $$
 If we add $g_{cd}$ on both sides, we get 
 $$ eq1\quad\leftrightarrow\quad g_{cd}R^d_{ab}=-\frac{\partial \varphi}{\partial q^a}g_{cb}+\frac{\partial\varphi}{\partial q^b}g_{ca}.$$
 In the above, we interchange first the indices $c$ and $b$, and then we interchange the indices $a$ and $b$. This gives the following 2 equations:
 $$eq2\quad\leftrightarrow\quad g_{bd}R^d_{ac}=-\frac{\partial \varphi}{\partial q^a}g_{bc}+\frac{\partial\varphi}{\partial q^c}g_{ba} ,$$
$$eq3\quad\leftrightarrow\quad g_{ad}R^d_{bc}=-\frac{\partial \varphi}{\partial q^b}g_{ac}+\frac{\partial\varphi}{\partial q^c}g_{ab}.$$
When we compute $eq1-eq2+eq3$, we get
$$
g_{cd}R^d_{ab}-g_{bd}R^d_{ac}+g_{ad}R^d_{bc}=0\quad\Leftrightarrow\quad g_{cd}R^d_{ab}+g_{bd}R^d_{ca}+g_{ad}R^d_{bc}=0 ,
$$
which is what we wanted to show. 
\end{proof}

This result is more interesting then it may seem at first. It tells us that, in order for a  function $\varphi$ with the $ \varphi$-simplicity property to exists,  we need to check a necessary condition for the gyroscopic tensor. From  Lemma~\ref{ervoor}, we also know that in that case the relations $d\varphi\wedge\theta_l=\Xi^G $ and $d\varphi\wedge\theta_l=\gamma^G$ will coincide. 

However, this type of cyclic obstruction does not occur in condition $(A')^G$ (or, equivalently, in $d\varphi\wedge\theta_l=\gamma^G$). If we repeat the same steps as the ones in the proof of Lemma~\ref{cyclic}, we get
$$
eq1\quad\leftrightarrow\quad \frac{\partial \varphi}{\partial q^b}g_{ac}-\frac{\partial \varphi}{\partial q^c}g_{ab}=\frac{1}{3}(\overline{g}_{bk}R^k_{ac}-\overline{g}_{ck}R^k_{ab}+2\overline{g}_{ak}R^k_{bc})  ,
$$
$$
eq2\quad\leftrightarrow\quad \frac{\partial \varphi}{\partial q^c}g_{ba}-\frac{\partial \varphi}{\partial q^a}g_{bc}=\frac{1}{3}(\overline{g}_{ck}R^k_{ba}-\overline{g}_{ak}R^k_{bc}+2\overline{g}_{bk}R^k_{ac})  ,
$$
$$
eq3\quad\leftrightarrow\quad \frac{\partial \varphi}{\partial q^a}g_{cb}-\frac{\partial \varphi}{\partial q^b}g_{ca}=\frac{1}{3}(\overline{g}_{ak}R^k_{cb}-\overline{g}_{bk}R^k_{ca}+2\overline{g}_{ck}R^k_{ba}).
$$
When we now commute $eq1+eq2+eq3$ we only get $0=0+0+0$, which means that we don't necessarily have an obstruction of this type for condition $(A')^G$.

 We can now glue all the results of this section together, we obtain  a proposition that  casts the  $\varphi$-simplicity condition as a special case of the condition $(A')^G$. 
\begin{prop} \label{prop5}
Given  $\varphi\in\mathcal{C}^\infty(Q/G)$, the following statements are equivalent:
 \begin{enumerate}
 \item[(i)] $\varphi$ satisfies the $\varphi$-simplicity property, $\quad(\Leftrightarrow\quad d\varphi\wedge \theta_l=\Xi^G)$. 
     \item[(ii)] $
         \begin{cases}
\textrm{The function $\nvarphi=\varphi$ satisfies }           (A')^G \quad(\Leftrightarrow\quad d\varphi\wedge \theta_l=\gamma^G), \\[2mm] \displaystyle \sum_{cycl: \tilde X,\tilde Y,\tilde Z}g^{red}(\tilde X,{\mathcal T}(\tilde Y,\tilde Z)) =0,\,\, \forall \tilde X,\tilde Y,\tilde Z\in\mathcal{X}(Q/G).
         \end{cases}$

 \end{enumerate}
\end{prop}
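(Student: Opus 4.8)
The plan is to prove the two implications separately, using the reformulations that are already available from earlier in the excerpt, so that almost nothing new has to be computed from scratch.

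\textbf{From (i) to (ii).} Suppose $\varphi$ satisfies the $\varphi$-simplicity property. By Proposition~\ref{phi-simpl}, this is equivalent to $d\varphi\wedge\theta_l=\Xi^G$. By Lemma~\ref{cyclic}, $\varphi$-simplicity immediately forces the cyclic identity $g_{bd}R^d_{ca}+g_{cd}R^d_{ab}+g_{ad}R^d_{bc}=0$, which is exactly the second line of (ii) (rewritten invariantly via the gyroscopic tensor $\mathcal{T}$, using that $R^d_{ab}$ are the components of $\mathcal{T}$ in the coordinate basis, as recorded in~(\ref{gyrtens})). Now I invoke Lemma~\ref{ervoor}: that cyclic identity is precisely the condition $\gamma^G=\Xi^G$. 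Combining $d\varphi\wedge\theta_l=\Xi^G$ with $\gamma^G=\Xi^G$ yields $d\varphi\wedge\theta_l=\gamma^G$, which by Corollary~\ref{lem5} is equivalent to $(A')^G$ for $\nvarphi=\varphi$. So both lines of (ii) hold.

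\textbf{From (ii) to (i).} Now assume both statements in (ii). The cyclic identity $\sum_{cycl}g^{red}(\tilde X,\mathcal{T}(\tilde Y,\tilde Z))=0$ is, again by Lemma~\ref{ervoor}, equivalent to $\gamma^G=\Xi^G$. The first line of (ii) says $\nvarphi=\varphi$ satisfies $(A')^G$, which by Corollary~\ref{lem5} is equivalent to $d\varphi\wedge\theta_l=\gamma^G$. Since $\gamma^G=\Xi^G$, this gives $d\varphi\wedge\theta_l=\Xi^G$, and then Proposition~\ref{phi-simpl} hands back the $\varphi$-simplicity property. This closes the equivalence.

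\textbf{Where the work really sits.} The proof is essentially a diagram-chase through Proposition~\ref{phi-simpl}, Corollary~\ref{lem5}, Lemma~\ref{ervoor} and Lemma~\ref{cyclic}; the only genuinely substantive input that is \emph{not} purely formal is the observation — already made in the paragraph following Lemma~\ref{cyclic} — that condition $(A')^G$ (equivalently $d\varphi\wedge\theta_l=\gamma^G$) carries \emph{no} built-in cyclic obstruction, whereas the $\varphi$-simplicity condition (equivalently $d\varphi\wedge\theta_l=\Xi^G$) does. This asymmetry is exactly what makes the cyclic identity appear as a genuinely extra clause in (ii) rather than being automatic, and it is the conceptual point the proposition is meant to isolate. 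So in writing the proof I would be careful to spell out that the two scalar equations $d\varphi\wedge\theta_l=\gamma^G$ and $d\varphi\wedge\theta_l=\Xi^G$ are \emph{a priori} different, and that their difference is precisely governed by $\gamma^G-\Xi^G$, whose vanishing is the cyclic condition of Lemma~\ref{ervoor}. The main (modest) obstacle is simply bookkeeping: making sure every ``equivalent to'' is used in the right direction and that the reduced-versus-lifted distinction (functions on $Q/G$ versus $Q$) is handled consistently, but no new computation beyond what Lemmas~\ref{ervoor} and~\ref{cyclic} already provide is needed.
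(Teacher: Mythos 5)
Your proof is correct, and both directions go through: every equivalence you invoke (Proposition~\ref{phi-simpl}, Corollary~\ref{lem5}, Lemma~\ref{ervoor}, Lemma~\ref{cyclic}) is stated in the form you use it, and the chain $d\varphi\wedge\theta_l=\gamma^G$ together with $\gamma^G=\Xi^G$ does deliver $\varphi$-simplicity. The direction $(i)\Rightarrow(ii)$ matches the paper, which likewise treats it as already established by the earlier results; for $(ii)\Rightarrow(i)$ the paper instead redoes the computation by hand --- it antisymmetrizes $(A')^G$ in the indices $b,c$ and then uses the cyclic identity to collapse $G_{bk}R^k_{ac}-G_{ck}R^k_{ab}$ into $G_{ak}R^k_{bc}$, landing directly on the coordinate form of $\varphi$-simplicity. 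Your diagram-chase through $\gamma^G=\Xi^G$ is the same algebra packaged modularly (the antisymmetrization is already hidden inside the proof of Proposition~\ref{lem:equiv}, and the cyclic collapse inside Lemma~\ref{ervoor}), so it buys a shorter, computation-free argument at the cost of leaning entirely on the prior lemmas; your closing remark correctly identifies the conceptual point, namely that $\gamma^G-\Xi^G$ is exactly the cyclic obstruction, which is why the second clause of (ii) is a genuine extra hypothesis rather than a consequence of $(A')^G$.
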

\begin{proof}
It only remains to show that $(ii) \Rightarrow (i)$. Suppose that  the condition $(A')^G$ is satisfied for $\nvarphi=\varphi$. Then, 
$$
G_{bk}R^k_{ac}+G_{ak}R^k_{bc}+g_{bc}\frac{\partial\varphi}{\partial q^a}+g_{ac}\frac{\partial\varphi}{\partial q^b}-2g_{ab}\frac{\partial\varphi}{\partial q^c}=0.
$$
When we interchange the indices $b$ and $c$, we obtain
$$
G_{ck}R^k_{ab}+G_{ak}R^k_{cb}+g_{cb}\frac{\partial\varphi}{\partial q^a}+g_{ab}\frac{\partial\varphi}{\partial q^c}-2g_{ac}\frac{\partial\varphi}{\partial q^b}=0.
$$
After subtracting these two equations from each other, we get 
$$
G_{bk}R^k_{ac}-G_{ck}R^k_{ab}+2G_{ak}R^k_{bc}-3g_{ab}\frac{\partial\varphi}{\partial q^c}+3g_{ac}\frac{\partial\varphi}{\partial q^b}=0.
$$ 
If the cyclic condition is also satisfied, then we may replace the two first terms with $G_{ak}R_{bc}^k$. Then, the equation becomes 
$$
3G_{ak}R^k_{bc}=3g_{ab}\frac{\partial\varphi}{\partial q^c}-3g_{ac}\frac{\partial\varphi}{\partial q^b}\quad\Leftrightarrow\quad g_{ad}R^d_{bc}=g_{ab}\frac{\partial\varphi}{\partial q^c}-g_{ac}\frac{\partial\varphi}{\partial q^b}, 
$$
where we have used again the property (\ref{useful}). If we change the index $a$ by $c$, index $c$ by $b$ and $b$ by $a$, we get 
$$g_{cd}R^d_{ab}=g_{ca}\frac{\partial\varphi}{\partial q^b}-g_{cb}\frac{\partial\varphi}{\partial q^a}$$
and this is the $\varphi$-simplicity condition in view of the proof of Proposition \ref{phi-simpl}. 
\end{proof}

The above proposition confirms, again, that  condition $(A')$ is more general than the $\varphi$-simplicity property.


We end this section with a coordinate-independent description of the form $\gamma$ we had introduced in coordinates. Similar to   the definition of $\Xi^G$ in (\ref{defXiG}) earlier, we may define a new 2-form $\Xi$ by  
$$
\Xi(\tilde X,\tilde Y):=\overline{g}({\rm{\bf T}}^H,\tilde{\mathcal{Q}}([{\tilde X}^H,{\tilde Y}^H])),\quad\textrm{for}\qquad \tilde X,\tilde Y\in\mathcal{X}(Q/G),
$$
where $\tilde{\mathcal{Q}}:TQ\rightarrow V\pi$. In coordinates $\Xi$ is given by 
$$
\Xi=\frac{1}{2}\overline{g}_{ai}R^i_{bc}\dot{q}^a dq^b\wedge dq^c.
$$
The difference between $\gamma$ and $\Xi$ is then 
$$
\gamma-\Xi=\frac{1}{6}(\overline{g}_{bk}R_{ac}^k-\overline{g}_{ck}R_{ab}^k-\overline{g}_{ak}R^k_{bc} )\dot{q}^a dq^b\wedge dq^c.
$$
This difference may be described globally by $\iota_\Gamma\Theta$, for any SODE $\Gamma$ on $Q/G$ (such as e.g.\ $\redvf$ is one). The 3-form $\Theta$ is such that 
$$
\Theta(\tilde X, \tilde Y, \tilde Z)=\frac{1}{9}\sum_{cycl: \tilde X,\tilde Y,\tilde Z}\overline{g}({\tilde X}^H,\tilde{\mathcal{Q}}([{\tilde Y}^H,{\tilde Z}^H])),
$$
when we identify invariant functions on $Q$ with functions on $Q/G$. 
Indeed, in coordinates we get $ \Theta=\frac{1}{18}(\overline{g}_{bk}R_{ac}^k-\overline{g}_{ck}R_{ab}^k-\overline{g}_{ak}R^k_{bc} )dq^a\wedge dq^b\wedge dq^c$. By taking the contraction with any SODE $\Gamma$ on $Q/G$, we get 
\begin{eqnarray*}
    \iota_\Gamma\Theta&=&\frac{1}{18}(\overline{g}_{bk}R_{ac}^k-\overline{g}_{ck}R_{ab}^k-\overline{g}_{ak}R^k_{bc} )\dot{q}^a dq^b\wedge dq^c -\frac{1}{18}(\overline{g}_{bk}R_{ac}^k-\overline{g}_{ck}R_{ab}^k-\overline{g}_{ak}R^k_{bc} )\dot{q}^b dq^a\wedge dq^c\\
    & & \qquad\quad +\frac{1}{18}(\overline{g}_{bk}R_{ac}^k-\overline{g}_{ck}R_{ab}^k-\overline{g}_{ak}R^k_{bc} )\dot{q}^c dq^a\wedge dq^b\\
    &= &\frac{1}{18}\dot{q}^a\Big(\overline{g}_{bk}R_{ac}^k-\overline{g}_{ck}R_{ab}^k-\overline{g}_{ak}R^k_{bc}-\overline{g}_{ak}R^k_{bc} -\overline{g}_{ck}R_{ab}^k+\overline{g}_{bk}R_{ac}^k \\&&\qquad\quad -\overline{g}_{ak}R^k_{bc} -\overline{g}_{ak}R^k_{bc}+\overline{g}_{bk}R_{ac}^k  \Big)dq^b\wedge dq^c\\
    &=&\frac{1}{6}(\overline{g}_{bk}R_{ac}^k-\overline{g}_{ck}R_{ab}^k-\overline{g}_{ak}R^k_{bc})\dot{q}^a dq^b\wedge dq^c\\
    &=& \frac{1}{6}\left[\sum_{cycl:a,b,c}\overline{g}_{bk}R^k_{ac}\right]\dot{q}^a dq^b\wedge dq^c
\end{eqnarray*}
and this is exactly the expression of the difference. Therefore, we may write $\gamma$ in a coordinate-free fashion as 
$$ 
\gamma=\Xi+\iota_\Gamma\Theta.
$$
Remark that it now also follows that  $\iota_{\Gamma}\gamma=\iota_{\Gamma}\Xi$. After substituting ${\overline g}_{ai}=-G_{ai}$ in this, we get
\begin{equation} \label{igammaisalpha}
\iota_{\Gamma}\gamma^G=\iota_{\Gamma}\Xi^G= \alphabar,
\end{equation}
and this is, in particular true for $\Gamma=\redvf$. Finally, the condition  $\displaystyle\sum_{cycl: \tilde X,\tilde Y,\tilde Z}g^{red}(\tilde X,{\mathcal T}(\tilde Y,\tilde Z)) =0$  of Proposition~\ref{prop5} is now simply $\Theta^G=0$.

\section{Invariant measures} \label{sec:invmeasure}

In this section, we show that a reduced Chaplygin system may sometimes be regarded  as a Hamiltonian system with respect to a certain symplectic form. This is what we call a Hamiltonian parametrization. We will see that both the $\varphi$-simplicity property and the condition $(A')^G$ give rise to such a symplectic form, but that these two symplectic forms will, in general, not be the same, apart from the case where the cyclic condition of Lemma~\ref{ervoor} is satisfied. Moreover, we will see in this section how this question is interwoven with the existence of an invariant volume form (and therefore also an invariant measure). We will discuss both sufficient conditions and an equivalent condition for the existence of an invariant volume form. 

\subsection{Sufficient conditions}

We  show first how to construct a symplectic form from condition $(A')$. Consider the Poincar\'e 2-form $\omega_l=-d\theta_l$.

\begin{lem} \label{lem:closed}
If the couple $(\overline{g}_{bk},\nvarphi)$ satisfies $(A')$, the  2-form 
   $e^\nvarphi\tilde{\tilde{\omega}}$ (with $\tilde{\tilde{\omega}} = \omega_l-\gamma$)
   is  closed. 
\end{lem}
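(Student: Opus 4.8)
The plan is to reduce condition $(A')$ to the reformulation provided by Proposition~\ref{lem:equiv}, and then to recognize $e^\nvarphi\tilde{\tilde{\omega}}$ as an exact $2$-form. First I would invoke Proposition~\ref{lem:equiv}: since the couple $(\overline{g}_{bk},\nvarphi)$ satisfies $(A')$, it satisfies the equivalent identity $\gamma = d\nvarphi\wedge\theta_l$ on $T(Q/G)$. (One should note that this really is an equality of differential forms and not merely of their $dq^b\wedge dq^c$-components: $d\nvarphi$, $\theta_l$ and $\gamma$ are all semi-basic, so agreement on the coordinate vector fields $\partial/\partial q^b$ — which is precisely what the proof of Proposition~\ref{lem:equiv} establishes — already forces full equality.)

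Next I would compute $d(e^\nvarphi\theta_l)$. Using $\omega_l=-d\theta_l$ together with the identity just recalled,
$$d\big(e^\nvarphi\theta_l\big)=e^\nvarphi\,d\nvarphi\wedge\theta_l+e^\nvarphi\,d\theta_l=e^\nvarphi\big(\gamma-\omega_l\big)=-e^\nvarphi\,\tilde{\tilde{\omega}}.$$
Hence $e^\nvarphi\tilde{\tilde{\omega}}=-d(e^\nvarphi\theta_l)$ is exact, and therefore closed, which is exactly the claim. (Equivalently, without passing through exactness, one may differentiate directly: $d(e^\nvarphi\tilde{\tilde{\omega}})=e^\nvarphi\,d\nvarphi\wedge(\omega_l-\gamma)+e^\nvarphi\,d(\omega_l-\gamma)$, and then use $d\omega_l=-d^2\theta_l=0$, $d\gamma=d(d\nvarphi\wedge\theta_l)=-d\nvarphi\wedge d\theta_l=d\nvarphi\wedge\omega_l$, and $d\nvarphi\wedge d\nvarphi=0$, whereupon all terms cancel.)

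There is no genuinely hard step here; it is a one-line Cartan computation once Proposition~\ref{lem:equiv} is in hand. The only points requiring a little care are the semi-basicity remark above (so that the statement of Proposition~\ref{lem:equiv} may legitimately be used as an identity of forms), and the standing conventions of this section, under which $\nvarphi$, $\theta_l$, $\omega_l$ and $\gamma$ are all understood as objects on $T(Q/G)$ built from the reduced data (as in (\ref{Aprimered})), so that the exterior derivative $d$ and the relation $d^2=0$ refer to the manifold $T(Q/G)$.
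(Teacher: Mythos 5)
Your proof is correct and follows essentially the same route as the paper: both invoke Proposition~\ref{lem:equiv} to replace $\gamma$ by $d\nvarphi\wedge\theta_l$ and then conclude by a one-line exterior-derivative computation (your parenthetical alternative is verbatim the paper's argument). Your primary packaging, observing that $e^\nvarphi\tilde{\tilde{\omega}}=-d(e^\nvarphi\theta_l)$ is exact, is a mild strengthening of the stated conclusion, and your remark on semi-basicity correctly justifies reading Proposition~\ref{lem:equiv} as an identity of forms.
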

\begin{proof}
From Corollary~\ref{lem5}, we know that condition $(A')$ is equivalent with $d\nvarphi\wedge \theta_l=\gamma$.  We need to show that $e^\nvarphi(\omega_l-\gamma)$ is closed. This is the case:
$$
d(e^\nvarphi(\omega_l-\gamma))= d(e^\nvarphi\omega_l-e^\nvarphi d\nvarphi\wedge\theta_l) = e^\nvarphi d\nvarphi\wedge \omega_l-e^\nvarphi d\nvarphi\wedge d\nvarphi\wedge\theta_l + e^\nvarphi d\nvarphi \wedge d\theta_l
= 0.
$$
\end{proof}

Recall that ${\rm{dim}}(Q/G) =m$ and ${\rm{dim}}(T(Q/G)) =2m$. A $2m$-form  is therefore a top form on $T(Q/G)$.  

In the special case of  $\ort$-orthogonality (where $\overline{g}_{ai}=-G_{ai}$), we will denote the 2-form $\omega_l-\gamma^G$ as $\tilde{\tilde{\omega}}^G$. We may then state the following result.

\begin{prop} \label{invvolformG}
If $\nvarphi\in\mathcal{C}^\infty(Q/G)$ satisfies $(A')^G$ (i.e.\ when  the couple $(\overline{g}_{ai}=-G_{ai},\nphi=\nvarphi\circ\pi)$ is a projective geodesic extension), then 
     $$
     e^{(m-1)\nvarphi}(\tilde{\tilde{\omega}}^G)^m =      e^{(m-1)\nvarphi}\omega_l^m 
     $$ 
     is an invariant volume form. 
 \end{prop}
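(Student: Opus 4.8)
The plan is to first establish the identity $(\tilde{\tilde\omega}^G)^m = \omega_l^m$ as top forms on $T(Q/G)$, and then to show that the resulting $2m$-form $e^{(m-1)\nvarphi}\omega_l^m$ is both closed and a volume form, so that it is automatically invariant along $\redvf$ (indeed along any SODE). For the first part, I would use that $\tilde{\tilde\omega}^G = \omega_l - \gamma^G$, so
\[
(\tilde{\tilde\omega}^G)^m = (\omega_l - \gamma^G)^m = \sum_{j=0}^m \binom{m}{j}(-1)^j \omega_l^{m-j}\wedge (\gamma^G)^j.
\]
The key observation is that $\gamma^G$ is a \emph{semi-basic} $2$-form on $T(Q/G)$: from the coordinate expression $\gamma^G = \gamma^G_{bca}\dot q^a\, dq^b\wedge dq^c$ it only involves the $dq^b$'s (and the fibre coordinates $\dot q^a$ appear only as coefficients). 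Since $\dim(Q/G)=m$, any wedge product of more than $m$ of the one-forms $dq^b$ vanishes; hence $(\gamma^G)^j = 0$ for $j\ge 2$ whenever $j$ such wedge factors would exceed $m$ — more carefully, even $(\gamma^G)^2$ already contains a wedge of four $dq$'s, and in any product $\omega_l^{m-j}\wedge(\gamma^G)^j$ with $j\ge 1$ the number of $dq$-type factors contributed is $(m-j) + 2j = m+j > m$ (here one uses that $\omega_l = -d\theta_l = dq^b\wedge (g_{ab}\,d\dot q^a) + (\text{semi-basic})$, so each factor of $\omega_l$ contributes exactly one $dq$-type one-form and one $d\dot q$-type one-form to a nonzero top-form term). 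Therefore every term with $j\ge 1$ vanishes, leaving $(\tilde{\tilde\omega}^G)^m = \omega_l^m$, which is the claimed equality; in particular $e^{(m-1)\nvarphi}(\tilde{\tilde\omega}^G)^m = e^{(m-1)\nvarphi}\omega_l^m$.

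Next, I would show that $\Omega := e^{(m-1)\nvarphi}\omega_l^m$ is a volume form, i.e. nowhere zero: this is immediate because $\omega_l$ is the Poincaré $2$-form of the (positive-definite) reduced Lagrangian $l = \tfrac12 g_{ab}\dot q^a\dot q^b$, hence symplectic, so $\omega_l^m$ is a nowhere-vanishing top form, and $e^{(m-1)\nvarphi}>0$ everywhere. Then I would show $\Omega$ is closed, which by Lemma~\ref{lem:closed} (applied with $\overline g_{ai}=-G_{ai}$, so $\gamma = \gamma^G$) reduces to knowing that $e^{\nvarphi}\tilde{\tilde\omega}^G$ is closed: raising to the $m$-th power, $d\big((e^\nvarphi\tilde{\tilde\omega}^G)^m\big) = d\big(e^{m\nvarphi}(\tilde{\tilde\omega}^G)^m\big) = d\big(e^{m\nvarphi}\omega_l^m\big)$. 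This is not quite $d\Omega$ (the exponent is $m$, not $m-1$), so I would instead argue directly: $d\Omega = d\big(e^{(m-1)\nvarphi}\omega_l^m\big) = (m-1)e^{(m-1)\nvarphi}\,d\nvarphi\wedge\omega_l^m = 0$, since $d\nvarphi\wedge\omega_l^m$ is an $(2m+1)$-form on the $2m$-manifold $T(Q/G)$ and hence vanishes identically. So closedness is actually automatic for \emph{any} top-degree form times a function; the real content is the identity $(\tilde{\tilde\omega}^G)^m=\omega_l^m$ together with the invariance argument below.

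Finally, I would deduce invariance under $\redvf$. The cleanest route is: $\mathcal{L}_{\redvf}\Omega = d\,\iota_{\redvf}\Omega + \iota_{\redvf}\,d\Omega$, and since $d\Omega = 0$ (a top form is closed), this is $d\,\iota_{\redvf}\Omega$. To show this vanishes I would write $\iota_{\redvf}\Omega = e^{(m-1)\nvarphi}\,\iota_{\redvf}(\omega_l^m) = m\, e^{(m-1)\nvarphi}\,(\iota_{\redvf}\omega_l)\wedge\omega_l^{m-1}$, and use the defining relation (\ref{symplectic}), $\iota_{\redvf}\omega_l = dE_l + \alphabar$, together with $\iota_{\redvf}\gamma^G = \alphabar$ from (\ref{igammaisalpha}); combining, $\iota_{\redvf}\tilde{\tilde\omega}^G = \iota_{\redvf}(\omega_l - \gamma^G) = dE_l$, so $\iota_{\redvf}(\tilde{\tilde\omega}^G)^m = m\,dE_l\wedge(\tilde{\tilde\omega}^G)^{m-1}$ and hence $\iota_{\redvf}\Omega = m\,e^{(m-1)\nvarphi}\,dE_l\wedge(\tilde{\tilde\omega}^G)^{m-1}$. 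Then $d\,\iota_{\redvf}\Omega = m\,d\big(e^{(m-1)\nvarphi}\,dE_l\wedge(\tilde{\tilde\omega}^G)^{m-1}\big)$; expanding, the $e^{(m-1)\nvarphi}\,dE_l\wedge d\big((\tilde{\tilde\omega}^G)^{m-1}\big)$ piece needs $d(\tilde{\tilde\omega}^G)^{m-1}$, for which I would use Lemma~\ref{lem:closed}: $e^\nvarphi\tilde{\tilde\omega}^G$ closed gives $e^{(m-1)\nvarphi}(\tilde{\tilde\omega}^G)^{m-1}$ closed, so $d(\tilde{\tilde\omega}^G)^{m-1} = -(m-1)\,d\nvarphi\wedge(\tilde{\tilde\omega}^G)^{m-1}$, and the $(m-1)\,e^{(m-1)\nvarphi}\,d\nvarphi\wedge dE_l\wedge(\tilde{\tilde\omega}^G)^{m-1}$ contribution from differentiating the exponential cancels it exactly. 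Hence $\mathcal{L}_{\redvf}\Omega = 0$, as desired. \textbf{The main obstacle} I anticipate is the bookkeeping in the last paragraph — making the cancellation between $d(e^{(m-1)\nvarphi})\wedge dE_l\wedge(\tilde{\tilde\omega}^G)^{m-1}$ and $e^{(m-1)\nvarphi}\,dE_l\wedge d(\tilde{\tilde\omega}^G)^{m-1}$ precise, and being careful that "$(\tilde{\tilde\omega}^G)^m=\omega_l^m$" is used at the right moments (e.g. it does \emph{not} immediately give $(\tilde{\tilde\omega}^G)^{m-1}=\omega_l^{m-1}$, so the closedness of $e^{(m-1)\nvarphi}(\tilde{\tilde\omega}^G)^{m-1}$ must genuinely come from Lemma~\ref{lem:closed}, not from the top-form identity). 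A cleaner alternative for that last step, which I would probably adopt, is to bypass $(\tilde{\tilde\omega}^G)^{m-1}$ entirely and note $\mathcal{L}_{\redvf}\Omega = \mathcal L_{\redvf}\big(e^{(m-1)\nvarphi}\omega_l^m\big)$; since the whole thing is a top form, $\mathcal{L}_{\redvf}\Omega = (\operatorname{div}_\Omega\redvf)\,\Omega$, and $\operatorname{div}_\Omega\redvf = (m-1)\redvf(\nvarphi) + \operatorname{div}_{\omega_l^m}\redvf$, where the second term is computed from $\iota_{\redvf}\omega_l = dE_l + \alphabar$ to be $-(m-1)\redvf(\nvarphi)$ after using (\ref{igammaisalpha}) — but I would work out which of these two presentations is shorter once the calculation is in front of me.
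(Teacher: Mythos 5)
Your proposal is correct and uses the same essential ingredients as the paper's proof: Lemma~\ref{lem:closed}, the identity $\iota_{\redvf}\gamma^G=\alphabar$ from (\ref{igammaisalpha}) so that $\iota_{\redvf}\tilde{\tilde{\omega}}^G=dE_l$, the semi-basicity of $\gamma^G$ for $(\tilde{\tilde{\omega}}^G)^m=\omega_l^m$, and Cartan's formula with the fact that top forms are closed. The only difference is presentational: the paper routes the computation through $\mathcal{L}_{e^{-\nvarphi}\redvf}\bigl((e^{\nvarphi}\tilde{\tilde{\omega}}^G)^m\bigr)=0$ and the rewriting $\iota_{e^{-\nvarphi}\redvf}\bigl((e^{\nvarphi}\tilde{\tilde{\omega}}^G)^m\bigr)=\iota_{\redvf}\bigl(e^{(m-1)\nvarphi}(\tilde{\tilde{\omega}}^G)^m\bigr)$, whereas you unpack the same cancellation by hand in $d\,\iota_{\redvf}\Omega$.
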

\begin{proof}
From Lemma~\ref{lem:closed}, we know that the 2-form $\tilde{\tilde{\omega}}^G$ is closed. With the help of the property (\ref{igammaisalpha}),  the equation $\iota_{\redvf}{\omega}_l=dE_l+\alphabar$ can be rewritten as $\iota_{\redvf}({\omega}_l-\gamma^G)=dE_l$.


Given that also  $\iota_{e^{-\nvarphi}\redvf}(e^{\nvarphi}\tilde{\tilde{\omega}}^G)=dE_l$, we see  that $e^{-\nvarphi}\redvf$ is a Hamiltonian vector field with respect to  the closed form $e^\nvarphi \tilde{\tilde{\omega}}^G$. We can now compute the Lie derivative of $e^{\nvarphi}\tilde{\tilde{\omega}}^G$ with respect to this vector field $e^{-\nvarphi}\redvf$: 
$$
\mathcal{L}_{e^{-\nvarphi}\redvf}(e^{\nvarphi}\tilde{\tilde{\omega}}^G)=\iota_{e^{-\nvarphi}\redvf}(d(e^{\nvarphi}\tilde{\tilde{\omega}}^G))+d(\iota_{e^{-\nvarphi}\redvf}(e^{\nvarphi}\tilde{\tilde{\omega}}^G))=0+d(dE_l)=0.
$$
From this, we also get that (because of the involved top-forms)
\begin{eqnarray*}
0&=&\mathcal{L}_{e^{-\nvarphi}\redvf}((e^{\nvarphi}\tilde{\tilde{\omega}}^G)^m) = \iota_{e^{-\nvarphi}\redvf} \left(d((e^{\nvarphi}\tilde{\tilde{\omega}}^G)^m)\right) + d \left(\iota_{e^{-\nvarphi}\redvf} ((e^{\nvarphi}\tilde{\tilde{\omega}}^G)^m)\right) \\&=& 0 + d \left(\iota_{\redvf} (e^{(m-1)\nvarphi} (\tilde{\tilde{\omega}}^G)^m)\right)= \mathcal{L}_{\redvf} \left(e^{(m-1)\nvarphi}(\tilde{\tilde{\omega}}^G)^m\right) - \iota_{\redvf} \left( d (e^{(m-1)\nvarphi} (\tilde{\tilde{\omega}}^G)^m) \right) \\& = &\mathcal{L}_{\redvf}(e^{(m-1)\nvarphi}(\tilde{\tilde{\omega}}^G)^m) -0,
\end{eqnarray*}
meaning  that $e^{(m-1)\nvarphi}(\tilde{\tilde{\omega}}^G)^m$ is an invariant volume form. This  part of the proof is, in fact, an application of a more general property that is to be found in Proposition~3.18 of \cite{Garcia}, and attributed there to e.g.\ \cite{Ehlers,FJ}. 

Finally, it is easy to see that the top forms $(\tilde{\tilde{\omega}}^G)^m$ and $\omega_l^m$ coincide, because of the fact that $\gamma^G$ is semi-basic.
\end{proof}

We are now ready to explain a result of e.g.\ \cite{Garcia}, the help of the above Propositions, as a special case: One of the sufficient conditions for the existence of an invariant volume form on the reduced space is the $\varphi$-simplicity property. 

\begin{cor} If the system is $\varphi$-simple, the reduced Chaplygin equations have an invariant volume form $e^{(m-1)\varphi}\tilde{\omega}^m= e^{(m-1)\varphi}\omega_l^m$ (where $\tilde{\omega}=\omega_l-\Xi^G$). \label{invvolform}
\end{cor}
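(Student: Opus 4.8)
The plan is to derive Corollary~\ref{invvolform} directly from the machinery already assembled, by recognizing $\varphi$-simplicity as a special instance of the hypotheses of Proposition~\ref{invvolformG}. First I would recall that, by Proposition~\ref{phisimpleprojgeodext}, $\varphi$-simplicity guarantees that the couple $(\overline{g}_{ai}=-G_{ai},\nphi=\varphi\circ\pi)$ satisfies conditions $(A')$ and $(B')$; in particular, the reduced function $\nvarphi=\varphi$ satisfies $(A')^G$. This places us squarely in the situation of Proposition~\ref{invvolformG} with $\nvarphi=\varphi$, which immediately yields that $e^{(m-1)\varphi}(\tilde{\tilde\omega}^G)^m = e^{(m-1)\varphi}\omega_l^m$ is an invariant volume form for $\redvf$. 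So the existence of the invariant volume form is essentially free.

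The only remaining point is a bookkeeping matter: the statement of the Corollary is phrased in terms of $\tilde\omega=\omega_l-\Xi^G$ rather than $\tilde{\tilde\omega}^G=\omega_l-\gamma^G$. Here I would invoke Lemma~\ref{cyclic}, which says that $\varphi$-simplicity implies the cyclic identity $g_{bd}R^d_{ca}+g_{cd}R^d_{ab}+g_{ad}R^d_{bc}=0$; by Lemma~\ref{ervoor} this is exactly the condition under which $\gamma^G=\Xi^G$, hence $\tilde{\tilde\omega}^G=\tilde\omega$. Therefore $e^{(m-1)\varphi}\tilde\omega^m = e^{(m-1)\varphi}(\tilde{\tilde\omega}^G)^m$, and the conclusion of Proposition~\ref{invvolformG} can be restated in the form given in the Corollary. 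Alternatively, even without passing through the cyclic identity, one notes that both $\Xi^G$ and $\gamma^G$ are semi-basic forms on $T(Q/G)$, so $\tilde\omega^m=(\omega_l-\Xi^G)^m=\omega_l^m=(\omega_l-\gamma^G)^m=(\tilde{\tilde\omega}^G)^m$ as top forms; thus the two expressions for the volume form agree regardless, and invoking Lemma~\ref{cyclic} is really only needed if one wants the stronger statement that the \emph{two-forms} themselves coincide.

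There is essentially no obstacle here — the Corollary is a direct specialization, and the proof is a two-line affair. The one subtlety worth flagging explicitly in the write-up is exactly which fact is doing the work for which part of the claim: Proposition~\ref{phisimpleprojgeodext} supplies that $\varphi$ solves $(A')^G$, Proposition~\ref{invvolformG} supplies the invariant volume form, and Lemma~\ref{cyclic} (together with Lemma~\ref{ervoor}) supplies the identification $\tilde\omega=\tilde{\tilde\omega}^G$ that lets us write the answer in the notation of the statement. Concretely:

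\begin{proof}
By Proposition~\ref{phisimpleprojgeodext}, if the system is $\varphi$-simple then the couple $(\overline{g}_{ai}=-G_{ai},\nphi=\varphi\circ\pi)$ satisfies conditions $(A')$ and $(B')$; in particular the reduced function $\nvarphi=\varphi$ satisfies $(A')^G$. Proposition~\ref{invvolformG} then gives that $e^{(m-1)\varphi}(\tilde{\tilde{\omega}}^G)^m = e^{(m-1)\varphi}\omega_l^m$ is an invariant volume form for $\redvf$. Moreover, by Lemma~\ref{cyclic}, $\varphi$-simplicity implies the cyclic identity $g_{bd}R^d_{ca}+g_{cd}R^d_{ab}+g_{ad}R_{bc}^d=0$, which by Lemma~\ref{ervoor} is equivalent to $\gamma^G=\Xi^G$, hence $\tilde{\tilde{\omega}}^G = \omega_l-\gamma^G = \omega_l-\Xi^G = \tilde{\omega}$. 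Therefore $e^{(m-1)\varphi}\tilde{\omega}^m = e^{(m-1)\varphi}(\tilde{\tilde{\omega}}^G)^m = e^{(m-1)\varphi}\omega_l^m$ is an invariant volume form.
\end{proof}
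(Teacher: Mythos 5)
Your proof is correct and follows essentially the same route as the paper's: establish that $\varphi$ satisfies $(A')^G$ (the paper cites Proposition~\ref{prop5}, you cite Proposition~\ref{phisimpleprojgeodext}, but these supply the same fact), apply Proposition~\ref{invvolformG}, and use Lemma~\ref{cyclic} together with Lemma~\ref{ervoor} to identify $\tilde{\tilde{\omega}}^G$ with $\tilde{\omega}$. Your additional remark that the top forms agree anyway because $\gamma^G$ and $\Xi^G$ are semi-basic is also the observation the paper itself makes inside the proof of Proposition~\ref{invvolformG}.
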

\begin{proof}
From Proposition~\ref{prop5} we know that, if the system is $\varphi$-simple, the function $\nvarphi=\varphi$ satisfies $(A')^G$. We also know from Lemma~\ref{cyclic} that the cyclic condition is satisfied, from which $\gamma^G =\Xi^G$ (Lemma~\ref{ervoor}). Therefore, we are in the situation of Proposition~\ref{invvolformG}, and $\tilde{\tilde{\omega}}^G = \tilde\omega$.
\end{proof}
Even though the notion of $\varphi$-simplicity does not appear in \cite{Cantrijn, ohsawa} as such, a version of the above result can also be found there, if one takes the equivalent characterization of Proposition~\ref{phi-simpl} into account.

Notice that even though the symplectic form $\omega_l-\Xi^G$ is different from the one used in the previous proposition, namely $ \omega_l-\gamma^G$ (since, in general, $\gamma^G\neq \Xi^G$), the volume forms will still be the same, $e^{(m-1)\varphi}\omega_l^n$.

We conclude that $\varphi$-simplicity is not a necessary condition for the existence of an invariant volume form. The subtlety lies in the following observations.
 The reduced Chaplygin system is given by 
$$
\iota_{\redvf}\omega_l=d E_l+\alphabar.
$$
In \cite{ohsawa, Cantrijn, Garcia} the gyroscopic 1-form $\alphabar$ is defined in such a way that  
$ \overline{\alpha_{\Gamma|_{(L,\mathcal{D})}}}=\iota_{\redvf}\Xi^G$. This is then used to write the reduced system as 
$$
\iota_{\redvf}\tilde{\omega}=d E_l, 
$$
where $\tilde{\omega}=\omega_l-\Xi^G$. When the system is $\varphi$-simple, $\tilde\omega$ is symplectic, and the reduced system is Hamiltonian.

However, in the proof of Proposition~\ref{invvolformG}, we have seen that $\Xi^G$ is not the only 2-form that satisfies $\iota_{\redvf}\Xi^G=\alphabar$. Indeed, the 2-form $\gamma^G$ also satisfies $\iota_{\redvf}\gamma^G=\alphabar$ and we have seen that the reduced Chaplygin system  can also be written as 
$$
\iota_{\redvf}\tilde{\tilde{\omega}}^G=d E_l,
$$
where $ \tilde{\tilde{\omega}}^G=\omega_l-\gamma^G$. Lemma~\ref{lem:closed} says that this will be a Hamiltonian system when condition $(A')^G$ is satisfied. 

These results  fit in the framework of e.g.\ \cite{Balseiro, BalseiroFernandez, Sansonetto} where, roughly speaking, a certain semi-basic two-form $B$ is added to the 2-form $\omega_l-\Xi^G$ in order to write the reduced system as
$$
\iota_{\redvf}(\omega_l-\Xi^G-B)=dE_l.
$$
Among other conditions, this 2-form $B$ needs to satisfy $\iota_{\redvf}B=0$. If that is the case,  $B$ is called a {\em dynamical gauge transformation} in \cite{Balseiro}. In our setting, the difference between $\gamma^G$ and $\Xi^G$ is such a 2-form $B$, since $\iota_{\redvf}\Xi^G= \alphabar=\iota_{\redvf}\gamma^G$. In fact, in the previous section, we saw that the components of this difference $B$ are actually the cyclic coefficients that we encountered in e.g.\ Lemma~\ref{ervoor}.

\subsection{Equivalent condition}

In \cite{Cantrijn} one may find an equivalent condition for the existence of an invariant volume form. This condition makes use of a certain 1-form $\beta$ on the reduced space $Q/G$. It is defined as the following contraction with the gyroscopic tensor $\mathcal{T}$:
\begin{equation}\label{contraction}
\beta(\tilde Y):=\sum_a\left<Z^a,\mathcal{T}\left(Z_a,\tilde Y\right)\right>,\quad\forall \tilde Y\in\mathcal{X}(Q/G),
\end{equation}
where $\{Z_a\}$ is a basis for $\mathcal{X}(Q/G)$ and $\{Z^a\}$ is its dual. In local coordinates $\beta$ is given by 
$$
\beta=\sum_{a,b}R_{ab}^a dq^b.
$$
We refer to \cite{Cantrijn} for the proof of the following statement (and to \cite{Garcia} for an alternative proof):

\begin{prop}
 The reduced equations of motion of a Chaplygin system possess a basic invariant volume form  if and only if $\beta$ is exact (locally: if and only if $d\beta=0$).  
\end{prop}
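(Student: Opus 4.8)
The plan is to reduce the statement to a divergence computation: I would show that the reduced SODE $\redvf$ has divergence $-\widehat\beta$ with respect to the symplectic volume $\omega_l^m$, where $\widehat\beta$ is the fibrewise-linear function on $T(Q/G)$ associated with $\beta$, namely $\widehat\beta(q,\dot q)=\langle\beta_q,\dot q\rangle=\big(\textstyle\sum_a R^a_{ab}\big)\dot q^b$. Granting this, a basic volume form — that is, one of the form $e^{\overline\tau^*\varsigma}\,\omega_l^m$ with $\varsigma\in\mathcal{C}^\infty(Q/G)$, equivalently a volume form whose density with respect to $\omega_l^m$ is pulled back from $Q/G$ — is $\redvf$-invariant if and only if $\redvf(\overline\tau^*\varsigma)=\widehat\beta$, i.e.\ $\partial\varsigma/\partial q^b=\sum_a R^a_{ab}$ for every $b$, i.e.\ $d\varsigma=\beta$. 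Hence an invariant basic volume form exists precisely when $\beta$ is exact, and, by the Poincar\'e lemma, locally precisely when $d\beta=0$.

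To compute the divergence I would first split $\redvf=X_{E_l}+Y$, where $X_{E_l}$ is the Hamiltonian vector field of $E_l$ with respect to $\omega_l$ (equivalently the geodesic spray of $g^{red}$, i.e.\ the reduced SODE obtained by setting $\alphabar=0$ in $\iota_{\redvf}\omega_l=dE_l+\alphabar$), and $Y:=\redvf-X_{E_l}$. Since $\redvf$ and $X_{E_l}$ are both second-order, $Y$ has no $\partial/\partial q^e$-component and is therefore vertical; from the coordinate form of $\redvf$ one reads off $Y=g^{ce}g_{db}R^d_{ac}\,\dot q^a\dot q^b\,\partial/\partial\dot q^e$. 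Because $\omega_l$ is closed, $\mathcal{L}_{X_{E_l}}\omega_l=d\,\iota_{X_{E_l}}\omega_l=d(dE_l)=0$, whence $\mathcal{L}_{X_{E_l}}\omega_l^m=0$ and so $\mathcal{L}_{\redvf}\omega_l^m=\mathcal{L}_Y\omega_l^m$.

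It then remains to compute $\mathcal{L}_Y\omega_l^m$. Here I would use that $\omega_l$ is the pull-back of the canonical symplectic form on $T^*(Q/G)$ along the fibrewise-linear Legendre map $p_b=g_{ab}\dot q^a$, so that $\omega_l^m$ equals, up to a nonzero constant, $\det(g_{ab})\,dq^1\wedge\cdots\wedge dq^m\wedge d\dot q^1\wedge\cdots\wedge d\dot q^m$; in particular its density is a function of $q$ alone. Consequently, for the vertical field $Y=Y^e\,\partial/\partial\dot q^e$ one gets $\mathcal{L}_Y\omega_l^m=\big(\partial Y^e/\partial\dot q^e\big)\,\omega_l^m$. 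Differentiating $Y^e=g^{ce}g_{db}R^d_{ac}\dot q^a\dot q^b$ produces two terms: the one in which the contracted index lands inside $R^d_{ec}$ vanishes, since $g^{ce}$ is symmetric in $(c,e)$ while $R^d_{ec}$ is skew in $(e,c)$; the other collapses through $g^{ce}g_{de}=\delta^c_d$ to $\sum_c R^c_{ac}\dot q^a$. Using once more that the structure functions are skew in their two lower indices, this equals $-\big(\sum_c R^c_{ca}\big)\dot q^a=-\widehat\beta$, and this is exactly where the definition (\ref{contraction}) of $\beta$ enters. Thus $\mathcal{L}_{\redvf}\omega_l^m=-\widehat\beta\,\omega_l^m$, and the equivalence follows as described (and, as a check, in the $\varphi$-simple case one recovers $\beta=(m-1)d\varphi$ and the volume form $e^{(m-1)\varphi}\omega_l^m$ of Corollary~\ref{invvolform}).

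I expect the only delicate point to be the bookkeeping in the divergence computation — in particular making the skew-symmetry cancellation and the sign precise — together with the (standard) fact that $\omega_l^m$ has a $q$-only density. Everything else is formal, and this is in essence the classical argument behind the results of \cite{Cantrijn} and \cite{Garcia} quoted above.
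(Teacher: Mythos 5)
Your proof is correct, and it is worth noting that the paper itself does not prove this proposition at all: it simply defers to \cite{Cantrijn} (with \cite{Garcia} for an alternative proof). Your divergence argument is essentially the classical one behind those references, and all the delicate points you flagged do check out. The decomposition $\redvf=X_{E_l}+Y$ with $Y=g^{ce}g_{db}R^d_{ac}\dot q^a\dot q^b\,\partial/\partial\dot q^e$ agrees with the coordinate form of $\redvf$ given in Section~2.2; $\mathcal{L}_{X_{E_l}}\omega_l^m=0$ is immediate; the density of $\omega_l^m$ relative to $dq\wedge d\dot q$ is indeed $\det(g_{ab})$ up to a constant, hence basic, so for vertical $Y$ the divergence is $\partial Y^e/\partial\dot q^e$; and the two-term differentiation, with the symmetric-times-skew cancellation $g^{ce}R^d_{ec}=0$ and the collapse $g^{ce}g_{de}=\delta^c_d$, gives $\sum_c R^c_{ac}\dot q^a=-\sum_c R^c_{ca}\dot q^a=-\widehat\beta$, with the sign consistent with the paper's later computation (where $(A')^G$ yields $\partial\big((m-1)\nvarphi\big)/\partial q^a=R^c_{ca}$ and the invariant volume $e^{(m-1)\nvarphi}\omega_l^m$). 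The only point you should make explicit is the meaning of \emph{basic}: the argument needs that a basic volume form is exactly one of the form $e^{\overline\tau^*\varsigma}\omega_l^m$ with $\varsigma\in\mathcal{C}^\infty(Q/G)$, which holds under the standard convention (density pulled back from $Q/G$, whether measured against $\omega_l^m$ or against $dq\wedge d\dot q$, since these differ by the basic factor $\det(g_{ab})$); with that convention both directions of the equivalence follow from $\mathcal{L}_{\redvf}\big(e^{\overline\tau^*\varsigma}\omega_l^m\big)=\big(\redvf(\overline\tau^*\varsigma)-\widehat\beta\big)e^{\overline\tau^*\varsigma}\omega_l^m$ exactly as you state. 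In short: a correct, self-contained proof of a statement the paper only cites.
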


The 1-form $\beta$ is exact if and only if there exists a function $S$ on $Q/G$ such that 
\begin{equation} \label{equivcond}
\frac{\partial S}{\partial q^e}=\sum_{a}R_{ae}^a,
\end{equation}
and, locally, this is the case if and only if 
$$
\frac{\partial R_{ae}^a}{\partial q^f}=\frac{\partial R_{af}^a}{\partial q^e}.
$$


We want to see what the obstruction is for the equivalence between $(A')^G$ and the existence of an invariant volume form. First, we already know from Proposition~\ref{invvolformG}  that one direction is always satisfied. We give an alternative proof for this property. If  condition $(A')^G$ holds for $\nvarphi$, we have seen that 
\begin{eqnarray*}
    (A')^G &\Leftrightarrow& d\nvarphi\wedge \theta_l=\gamma^G\\
    &\Leftrightarrow& g^{cd}( -G_{bc}R_{ac}^e-G_{ae}R_{bc}^e-g_{bc}\frac{\partial\nvarphi}{\partial q^a}-g_{ac}\frac{\partial \nvarphi}{\partial q^b}+2g_{ab}\frac{\partial \nvarphi}{\partial q^c})=0
\end{eqnarray*}
Here, the indices $d$ and $b$ are free. When we take $d=b$ en  sum over all $b$, we get 
\begin{eqnarray*}
   (A')^G &\Rightarrow& -g^{cb}g_{be}R^e_{ac}-g^{cb}g_{ae}R^e_{bc}-m\frac{\partial\nvarphi }{\partial q^a}-\frac{\partial \nvarphi}{\partial q^a}+2\delta_a^c\frac{\partial\nvarphi}{\partial q^c}=0\\
   &\Leftrightarrow& -R_{ac}^c-0-(m-1)\frac{\partial\nvarphi}{\partial q^a}=0.
   \\
   &\Leftrightarrow& \frac{\partial(m-1)\nvarphi}{\partial q^a}=R_{ca}^c
\end{eqnarray*}
(with sum over $c$). If we take $S=(m-1)\nvarphi$,  then we recover  the equivalent condition (\ref{equivcond}) for the existence of an invariant volume form. Based on Proposition~\ref{invvolformG}, the invariant volume form is of the form $e^{(m-1)\nvarphi}\omega_l^n=e^{S}\omega_l^n$.




The obstruction to equivalence will appear in the other direction. Suppose that $\nvarphi$ already satisfies 
$$
d((m-1)\nvarphi)=\beta.
$$
This $\nvarphi$ will satisfy also $(A')$, whenever 
$$ 
-G_{be}R_{ac}^e-G_{ae}R_{bc}^e-g_{bc}\left(\frac{1}{m-1}R_{ea}^e\right)-g_{ac}\left( \frac{1}{m-1}R_{eb}^e\right)+2g_{ab}\left(\frac{1}{m-1}R_{ec}^e \right)=0.
$$
Similarly as in the proof of Proposition~\ref{lem5}, this expression can be seen to be equivalent with the property
$$
\frac{\beta}{m-1}\wedge\theta_l=\gamma^G.
$$
This is the sought obstruction for an invariant volume form to give a projective geodesic extension. We may conclude: 
\begin{prop} Let $\beta$ be the contraction (\ref{contraction}) of the Chaplygin system. The following statements are equivalent:
\begin{enumerate} \item There exists a locally defined  $\nvarphi$ that satisfies $(A')^G$ (i.e.\ there exist a projective geodesic extension $(\overline{g}_{ai}=-G_{ai},\nphi=\nvarphi\circ\pi)$).
\item   $\beta$ satisfies $\begin{cases}
        d\beta=0,\\[1mm]
\displaystyle        \frac{\beta}{m-1}\wedge\theta_l=\gamma^G.
    \end{cases}$
    \end{enumerate}
\end{prop}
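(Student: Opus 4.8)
The plan is to reduce the whole equivalence to three ingredients that are already in place just above the statement: the equivalence of $(A')^G$ with $d\nvarphi\wedge\theta_l=\gamma^G$ (Corollary~\ref{lem5}), the trace identity obtained by contracting $(A')^G$ with the inverse reduced metric, and the Poincar\'e lemma. No computation beyond the ones already displayed before the proposition should be needed.

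First I would prove $(1)\Rightarrow(2)$. Starting from a function $\nvarphi$ satisfying $(A')^G$, contract the coordinate form of $(A')^G$ with $g^{bc}$ and sum over $b$ and $c$. The term $g^{bc}G_{ak}R^k_{bc}$ drops out because $R^k_{bc}$ is antisymmetric in $b,c$ while $g^{bc}$ is symmetric; the term $g^{bc}G_{bk}R^k_{ac}$ collapses to $R^c_{ac}$ via the identity (\ref{useful}); and the three $\nvarphi$-terms combine with multiplicity $m+1-2=m-1$. One is left with $R^c_{ac}+(m-1)\,\partial\nvarphi/\partial q^a=0$, i.e.\ $d\bigl((m-1)\nvarphi\bigr)=\beta$. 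Hence $\beta$ is exact, so $d\beta=0$; and since then $\tfrac{1}{m-1}\beta=d\nvarphi$, the relation $d\nvarphi\wedge\theta_l=\gamma^G$ provided by Corollary~\ref{lem5} reads exactly $\tfrac{1}{m-1}\beta\wedge\theta_l=\gamma^G$. This establishes both conditions in (2), and note that this implication is in fact global (it also re-derives the invariant-volume-form statement of Proposition~\ref{invvolformG}).

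For the converse $(2)\Rightarrow(1)$ I would invoke the Poincar\'e lemma: on a contractible neighbourhood in $Q/G$, the hypothesis $d\beta=0$ yields a function $S$ with $dS=\beta$. Set $\nvarphi:=S/(m-1)$, so that $d\nvarphi=\tfrac{1}{m-1}\beta$. Then the second hypothesis of (2), namely $\tfrac{1}{m-1}\beta\wedge\theta_l=\gamma^G$, becomes $d\nvarphi\wedge\theta_l=\gamma^G$, which by Corollary~\ref{lem5} is precisely $(A')^G$ for $\nvarphi$; and by Corollary~\ref{corollary} the couple $(\overline{g}_{ai}=-G_{ai},\nphi=\nvarphi\circ\pi)$ is then an $\ort$-orthogonal projective geodesic extension. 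This is the step that forces the word ``locally'' into the statement.

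The main (and essentially only) obstacle is careful bookkeeping in the trace step: one must check that precisely one term vanishes by the symmetry/antisymmetry clash, that the identity (\ref{useful}) is applied with the contracted index in the correct slot, and above all that the multiplicative constant comes out as $m-1$ rather than $m$ or $m+1$ — this exact constant is what makes $\nvarphi=S/(m-1)$ work in the converse. The identification of $\tfrac{1}{m-1}\beta\wedge\theta_l=\gamma^G$ with the coordinate form of $(A')^G$ under the substitution $\partial\nvarphi/\partial q^a=\tfrac{1}{m-1}R^c_{ca}$ is then routine, and goes through the same symmetrisation-of-indices argument ($a\leftrightarrow b$, then add) that was used in the proof of Proposition~\ref{lem:equiv}.
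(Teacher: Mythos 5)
Your proof is correct and follows essentially the same route as the paper: the forward direction by tracing $(A')^G$ with $g^{bc}$ (using the antisymmetry of $R^k_{bc}$ and the identity (\ref{useful})) to get $d\bigl((m-1)\nvarphi\bigr)=\beta$ and then reading $d\nvarphi\wedge\theta_l=\gamma^G$ as $\tfrac{1}{m-1}\beta\wedge\theta_l=\gamma^G$, and the converse by the Poincar\'e lemma together with Corollary~\ref{lem5}. The bookkeeping you flag (the coefficient $m-1$ and the sign convention $\beta_a=R^c_{ca}=-R^c_{ac}$) comes out exactly as in the paper.
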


We can also relate this back to $\varphi-$simplicity. Again, one direction is trivial:
\begin{eqnarray*}
    \varphi-\textrm{simplicity}&\Leftrightarrow& d\varphi\wedge\theta_l=\Xi^G\\[1mm]
    &\Leftrightarrow& \frac{\partial\varphi}{\partial q^b}\delta_a^d-\frac{\partial\varphi}{\partial q^a}\delta_b^d=R_{ab}^d.
\end{eqnarray*}
If we take $b=d$ and sum over $b$ we get $\displaystyle\frac{\partial(m-1)\varphi}{\partial q^a}=R_{ba}^b$, and therefore the equivalent condition for an invariant volume form.


Based on Proposition~\ref{invvolform}, this invariant volume form is $e^{(m-1)\varphi}\omega_l^m$, where now $\varphi$ is defined by the $\varphi$-simplicity condition.


Notice that even though the symplectic form $\omega_l-\Xi^G $ is different from the one used in the previous section, namely $ \omega_l-\gamma^G$ (in general $\gamma^G\neq \Xi^G$), the volume forms will still be the same since also here one can show that $$
e^{(m-1)\varphi}(\omega_l-\Xi^G)^n=e^{(m-1)\varphi}\omega_l^m .
$$

Conversely, suppose that there exists a function $\varphi$ such that $d(m-1)\varphi=\beta$. This $\varphi$ will satisfy $\varphi$-simplicity when 
$$
\frac{1}{n-1}R_{eb}^e\delta_a^d-\frac{1}{n-1}R_{ea}^e\delta_b^d=R_{ab}^d \quad\Leftrightarrow\quad \frac{\beta}{m-1}\wedge\theta_l=\Xi.
$$

Therefore, we may conclude that 
\begin{prop}
Let $\beta$ be the contraction (\ref{contraction}) of the Chaplygin system. The following statements are equivalent:
\begin{enumerate} \item There exists a locally defined $\varphi \in {\mathcal C}^\infty(Q/G)$ for which the system is $\varphi$-simple.
 \item $\beta$ satisfies $\begin{cases}
          d\beta=0, \\[2mm]
\displaystyle          \frac{\beta}{m-1}\wedge\theta_l=\Xi.
         \end{cases}$ $\Leftrightarrow$ $\begin{cases}
          d\beta=0, \\[2mm]
          \displaystyle\frac{\beta}{m-1}\wedge\theta_l=\gamma^G,\\[2mm]\Theta^G=0.
         \end{cases}$
\end{enumerate}
\end{prop}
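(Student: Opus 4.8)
The plan is to establish the three-way equivalence — statement~(1), the first system of~(2), and the second system of~(2) — by assembling results already proved, in parallel with the analogous proposition for $(A')^G$ stated just before. Throughout I would use Proposition~\ref{phi-simpl} (the system is $\varphi$-simple iff $d\varphi\wedge\theta_l=\Xi^G$), Lemma~\ref{ervoor} (the cyclic identity $g_{bd}R^d_{ca}+g_{cd}R^d_{ab}+g_{ad}R^d_{bc}=0$ is equivalent to $\gamma^G=\Xi^G$, and hence, by the closing remark of Section~\ref{pges}, to $\Theta^G=0$), and Lemma~\ref{cyclic} ($\varphi$-simplicity forces that cyclic identity). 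I would note at the outset that under $\ort$-orthogonality, $\overline g_{ai}=-G_{ai}$, the 2-form $\Xi$ of this section reduces to $\Xi^G$, so the $\Xi$ in statement~(2) is to be read as $\Xi^G$.

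For $(1)\Rightarrow(2)$ I would take a $\varphi$ for which the system is $\varphi$-simple and apply the contraction carried out in the discussion preceding the proposition: summing an upper index against a lower one in the $\varphi$-simplicity identity gives $\beta=d\big((m-1)\varphi\big)$, so $d\beta=0$ and $\tfrac{\beta}{m-1}=d\varphi$. Then $\tfrac{\beta}{m-1}\wedge\theta_l=d\varphi\wedge\theta_l=\Xi^G$ by Proposition~\ref{phi-simpl}, which is the first system; and Lemma~\ref{cyclic} supplies the cyclic identity, so $\gamma^G=\Xi^G$ and $\Theta^G=0$ by Lemma~\ref{ervoor}, which is the second system.

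For the converse I would start from either system of~(2). Since $d\beta=0$, the Poincar\'e lemma gives, locally, a function $\varphi$ on $Q/G$ with $d\big((m-1)\varphi\big)=\beta$, i.e.\ $d\varphi=\tfrac{\beta}{m-1}$, so $d\varphi\wedge\theta_l=\tfrac{\beta}{m-1}\wedge\theta_l$; this equals $\Xi^G$ either directly (first system) or via $\gamma^G=\Xi^G$, valid because $\Theta^G=0$ is the cyclic identity (second system, Lemma~\ref{ervoor}). By Proposition~\ref{phi-simpl} this $\varphi$ makes the system $\varphi$-simple, which is~(1). The equivalence of the two systems within~(2) is then automatic: the second implies the first at once (as $\Theta^G=0\Leftrightarrow\gamma^G=\Xi^G$), and the first implies~(1) and hence, by $(1)\Rightarrow(2)$, the second.

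I expect no genuine obstacle — the proof is bookkeeping with identity~(\ref{useful}) plus citations of earlier results. The only delicate point is the apparent circularity in the internal equivalence of the two systems of~(2); it is broken by passing through statement~(1) via Lemma~\ref{cyclic} and Proposition~\ref{phi-simpl}. One must also remember to read $\Xi$ as $\Xi^G$, and to keep ``locally'' in force wherever the Poincar\'e lemma is used.
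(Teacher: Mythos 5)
Your proposal is correct and follows essentially the same route as the paper: contracting the $\varphi$-simplicity identity to obtain $\beta=d\bigl((m-1)\varphi\bigr)$ for the forward direction, invoking the Poincar\'e lemma on $d\beta=0$ for the converse and then checking $\varphi$-simplicity via Proposition~\ref{phi-simpl}, with the two systems in (2) reconciled through Lemma~\ref{ervoor} and Lemma~\ref{cyclic}. Your reading of $\Xi$ as $\Xi^G$ under $\ort$-orthogonality, and your explicit breaking of the apparent circularity between the two systems by passing through statement (1), match what the paper leaves implicit.
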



We can now bring all the previous results together in one overview. Consider a Chaplygin system with its given tensors $\beta,\Xi^G,\gamma^G$.


The previous and the next statements refer to locally defined functions because we make use of the Lemma of Poincare. In the global case only one implication holds.

\begin{thm}  \label{classification}
Suppose that $\overline{g}$ is $\ort$-orthogonal. Then the following (local) statements hold.

\begin{enumerate}
    \item[(i)] There exists a projective geodesic extension with $\nvarphi=0$ (or cte). $\,\,\Leftrightarrow$ $\begin{cases}
        \beta=0,\\
        \Xi^G=0.\\
            \end{cases}$ $\Leftrightarrow$ $\begin{cases}
        \beta=0,\\[1mm]
        \gamma^G=0,\\[1mm] \Theta^G=0.
    \end{cases}$
    \item[(ii)] The system is $\varphi$-simple. $\Leftrightarrow$ $\quad\begin{cases}
        d\beta=0,\\
\displaystyle        \frac{\beta}{m-1}\wedge\theta_l=\Xi^G.
    \end{cases}$ $\Leftrightarrow$ $\quad\begin{cases}
        d\beta=0,\\[2mm] 
\displaystyle         \frac{\beta}{m-1}\wedge\theta_l=\gamma^G,\\[2mm] \Theta^G=0.
    \end{cases}$
    \item[(iii)] There is a projective geodesic extension with $\mathcal{D}$-conformal change. $\Leftrightarrow$ $\begin{cases}
        d\beta=0,\\[2mm]
\displaystyle         \frac{\beta}{m-1}\wedge\theta_l=\gamma^G.
    \end{cases}$
    \item[(iv)] There exists an invariant volume form. $\,\,\Leftrightarrow$ $\quad d\beta=0$. 
\end{enumerate}
\end{thm}

\begin{cor}
When the dimension of $Q/G$ is 1 or 2, all the statements (apart from (i)) in  Proposition~\ref{classification} are equivalent. 
\end{cor}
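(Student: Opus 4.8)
The plan is to show that, when $m:=\dim(Q/G)\le 2$, the conditions that distinguish statements (ii) and (iii) from (iv) become vacuous, so that all three reduce to $d\beta=0$. In any dimension the list of conditions characterising (ii) contains that of (iii), which in turn contains that of (iv), so the chain (ii)$\Rightarrow$(iii)$\Rightarrow$(iv) always holds; it therefore suffices to prove the reverse implications under the hypothesis $m\le 2$.

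Consider first $m=2$. I would start from the cyclic obstruction of Lemma~\ref{ervoor}: since $R^d_{bc}=-R^d_{cb}$, the combination $g_{bd}R^d_{ca}+g_{cd}R^d_{ab}+g_{ad}R^d_{bc}$ is a totally skew-symmetric $(0,3)$-expression in the indices $(a,b,c)$, so it vanishes identically when $m\le 2$, because then $\Lambda^3 T^*(Q/G)=0$. Hence $\Theta^G=0$, and therefore $\gamma^G=\Xi^G$ by Lemma~\ref{ervoor}. It then remains to check the identity $\beta\wedge\theta_l=\Xi^G$ in dimension two: both sides are semi-basic $2$-forms on a two-dimensional base, hence a function linear in $\dot q^a$ times $dq^1\wedge dq^2$, and a direct comparison of the coefficient of $dq^1\wedge dq^2$ — using $\beta_1=\sum_c R^c_{c1}=-R^2_{12}$ and $\beta_2=\sum_c R^c_{c2}=R^1_{12}$ (again by skew-symmetry of $R^d_{ab}$), together with identity (\ref{useful}) on the $\Xi^G$ side — gives $-\sum_{d,a}g_{da}R^d_{12}\dot q^a$ on both sides. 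Since $m-1=1$, this says precisely that the extra conditions ``$\tfrac{\beta}{m-1}\wedge\theta_l=\gamma^G$'' and ``$\Theta^G=0$'' occurring in (iii) and (ii) are automatically satisfied, so all of (ii), (iii) and (iv) are equivalent to $d\beta=0$.

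For $m=1$ the distribution $\mathcal{D}$ has rank one, hence is integrable, so the connection curvature vanishes: $B^j_{ab}=0$, and therefore $R^\gamma_{ab}=0$, $\beta=0$, $\mathcal{T}=0$, $\Xi^G=\gamma^G=\Theta^G=0$. Then $d\beta=0$ holds trivially (so (iv) holds); $\mathcal{T}=0$ makes the system $\varphi$-simple, e.g.\ with $\varphi=0$ (so (ii) holds); and $(A')^G$ collapses to $2G_{1k}R^k_{11}=0$, which is satisfied for every $\nvarphi$, giving a $\ort$-orthogonal projective geodesic extension (so (iii) holds). All three statements being true, they are in particular equivalent. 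Here I would deliberately argue from the primary definitions ($\varphi$-simplicity, solvability of $(A')^G$, closedness of $\beta$) rather than from the derived characterisations, which are ill-defined for $m=1$ because of the factor $\tfrac{1}{m-1}$.

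The only genuine computation is the form identity $\beta\wedge\theta_l=\Xi^G$ in dimension two, which is routine once one writes out the contraction defining $\beta$ and uses the skew-symmetry of $R^d_{ab}$; the rest is bookkeeping with the vanishing of $\Lambda^3 T^*(Q/G)$ (and, for $m=1$, of the curvature itself). The main point requiring care is to avoid the division by $m-1$ when $m=1$ by working throughout with the primary conditions rather than their reformulations.
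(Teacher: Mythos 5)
Your proof is correct and follows essentially the same route as the paper: for $m=2$ the cyclic term $\Theta^G$ vanishes because there are no 3-forms on a two-dimensional base, and the identity $\beta\wedge\theta_l=\gamma^G=\Xi^G$ is verified by the same direct coordinate computation. Your separate treatment of $m=1$ (working from the primary definitions to avoid the ill-defined factor $\tfrac{1}{m-1}$) is a welcome addition, since the paper's proof only addresses the two-dimensional case.
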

\begin{proof}
When the dimension of $Q/G$ is 2, there are no 3-forms on $Q/G$, and therefore the cyclic condition is void and $\Theta^G=0$. Also, $\beta = -R^2_{12}dq^1+R^1_{12}dq^2$, $\theta_l = (g_{11}{\dot q}^1+g_{12}{\dot q}^2)dq^1 + (g_{12}{\dot q}^1+g_{22}{\dot q}^2)dq^2$, $\gamma^G=\Xi^G = -(R^1_{12} ((g_{11}{\dot q}^1+g_{12}{\dot q}^2)) + R^2_{12} (g_{12}{\dot q}^1+g_{22}{\dot q}^2)) dq^1\wedge dq^2$, and therefore automatically $\beta\wedge \theta_l \equiv \gamma^G$. So, the only condition that remains is  $d\beta=0$. 
\end{proof}

In 2 dimensions, the sole condition $d\beta =0$ is 
$$
\frac{\partial R^2_{12}}{\partial q^2} + \frac{\partial R^1_{12}}{\partial q^1} =0.
$$
This necessary and sufficient condition, for either one of the above properties, is well-known (via  many different approaches), and its relation to the invariant volume form is often called `Chaplygin reducing multiplier Theorem' (see e.g.\ \cite{Garcia,oscartom}).

\subsection{The generalized  nonholonomic particle}

We consider again the Chaplygin system of the generalized nonholonomic particle (where $\rho(x,y)$), but now only under the assumption that $\overline{g}_{ai}=-G_{ai}$. Here this means that
$$
{\overline g}_{xz}=-\rho(y)  ,\qquad {\overline g}_{yz}=0.
$$

 Since the dimension of $Q/G$ is 2, we know that all the different notions of the previous sections, \{$\varphi$-simplicity, projective geodesic extension and invariant volume form\}, coincide. We start again from scratch and we see what information we can obtain from the remaining condition $d\beta=0$. 

Since $[X_x,X_y]=-\frac{\partial\rho}{\partial y}X_z-\frac{\rho\frac{\partial\rho}{\partial y}}{1+\rho^2}X_x$, we have that 
$$ R_{xy}^x=-\frac{\rho\frac{\partial\rho}{\partial y}}{1+\rho^2}\quad\textrm{and}\quad R_{xy}^y=0.$$
Condition $d\beta=0$ holds if and only if $\displaystyle\frac{\partial R^x_{xy}}{\partial x}= -  \frac{\partial R^y_{xy}}{\partial y} 
$ and in this case this is 
$$
\frac{\partial}{\partial x}\left( -\frac{\rho\frac{\partial\rho}{\partial y}}{1+\rho^2}\right)=0.
$$ 
This means that $$
-\frac{\rho\frac{\partial\rho}{\partial y}}{1+\rho^2}=k(y)
$$
for some function $k$ that only depends on $y$. Let $K$ be a primitive function of $k$, i.e. $k(y) =K'(y)$. When we integrate both sides we get that 
$$
-\frac{1}{2}\ln(1+\rho^2)=K(y)+L(x),
$$
for some arbitrary function $L$ that only depends on $x$. From this, to get a projective geodesic extension,  the function $\rho(x,y)$ must be of the form
$$ 
\rho(x,y)=\pm\sqrt{e^{2(K(y)+L(x))}-1}.
$$

The corresponding conformal factor  $\nvarphi$ is such that $\frac{\partial (n-1)\nvarphi}{\partial q^a}=R_{ca}^c$. That is
\begin{equation}
    \begin{cases}
        \frac{\partial\nvarphi}{\partial x}=0\\
        \frac{\partial\nvarphi}{\partial y}=-\frac{\rho\frac{\partial\rho}{\partial y}}{1+\rho^2}=K'(y)
    \end{cases}
\end{equation}
From these PDES we can conclude that $\nvarphi$ is a function of $y$ only, and that
$$
\nvarphi(y)= K(y)=-\frac{1}{2}\ln(1+\rho^2(x,y))-L(x),
$$
where the ostensible dependence on $x$ in the right hand side cancels out.

This is consistent with the earlier results of Section~\ref{firstmodpar} and Section~\ref{secondmodpar}. The case where $\rho$ only depends on $y$ is the one where $G(x)=G_0$ is constant. Then $
\nvarphi(y)= -\frac{1}{2}\ln(1+\rho^2(y)),
$ is (up to  a constant) the conformal factor of the projective geodesic extension (keeping in mind that, here, we assume  $\overline{g}_{ai}=-G_{ai}$).

\subsection{A mathematical example} \label{mathex}

Based on the classification in Proposition~\ref{classification} we know that $(A')^G$ is a weaker condition than the $\varphi$-simplicity property, but a stronger condition than having only an invariant volume form.  In \cite{Jovanovic} it is shown that the ball rolling over a sphere is an example of a Chaplygin system that possesses an invariant measure, but that is not Hamiltonizable, and thus also not $\varphi$-simple. To  further verify that the item $(ii)$ of the classification is not void, we give a (mathematical) example of a nonholonomic system that is not $\varphi$-simple, but that does have a solution $\nvarphi$ of condition $(A')^G$. Such a system will allow for a projective geodesic extension (with $\overline{g}_{ai}=-G_{ai}$) and an invariant volume form,  without  being $\varphi$-simple. 

Consider on   ${\mathbb R}^4$ (with coordinates $(x,y,z,u)$) the nonholonomic  system with the somewhat overwhelming Lagrangian function 
\begin{eqnarray*}
 L
 &=&\frac{1}{2}\Big((1+2(y-z)+4(y-z)^2)\dot{x}^2+(1+2(z-x)+4(z-x)^2)\dot{y}^2+(1+2(x-y)+4(x-y)^2)\dot{z}^2\\
 & &+2((x-z)+4(x-y)(y-z))\dot{x}\dot{z}+2((y-x)+4(z-x)(y-z))\dot{x}\dot{y} \\
 & & +2((z-y)+4(z-x)(x-y))\dot{y}\dot{z} +2(1+4(y-z))\dot{x}\dot{u}+2(1+4(z-x))\dot{y}\dot{u} \\
 & & +2(1+4(x-y))\dot{z}\dot{u}+4\dot{u}^2\Big)   
\end{eqnarray*}
and a  nonholonomic constraint given by 
$$
\dot{u}=-(y-z)\dot{x}-(z-x)\dot{y}-(x-y)\dot{z}.
$$
The distribution $\mathcal{D}$ that describes this constraint is spanned by the vector fields
$$
X_x=\frac{\partial}{\partial x}-(y-z)\frac{\partial}{\partial u},\quad X_y=\frac{\partial}{\partial y}-(z-x)\frac{\partial}{\partial u},\quad X_z=\frac{\partial}{\partial z}-(x-y)\frac{\partial}{\partial u}.
$$
Both the distribution and the Lagrangian function are invariant with respect to the Lie group action of $G=\mathbb{R}$ on ${\mathbb R}^4$, given by 
$$ 
G\times Q\rightarrow Q,(a,(x,y,z,u))\mapsto (x,y,z,u+ a).
$$
The vertical distribution with respect to the principal fiber bundle $\pi:Q\rightarrow Q/G$ is then 
$$
V\pi=\textrm{span}\left\{V_u:=\frac{\partial}{\partial u}\right\}.
$$
The nonholonomic system is Chaplygin because  the distribution $\mathcal{D}$ may be seen as the horizontal distribution of a principal connection. 
With this, the tangent bundle $TQ$ may be decomposed as
$$
TQ=\mathcal{D}\oplus V\pi.
$$
When we rewrite the Lagrangian in terms of the quasi-velocities $(\tilde{v}^a,\tilde{v}^u)$ of the basis $\{X_a,V_u\}$, we get the more compact expression  
\begin{eqnarray*}
L&=&\frac{1}{2}g(\tilde{v},\tilde{v})=\frac{1}{2}( G_{ab}\tilde{v}^a \tilde{v}^b+2G_{au}\tilde{v}^a\tilde{v}^u+G_{uu}(\tilde{v}^u)^2)\\
&=&\frac{1}{2}({\tilde{v}_x}^2+{\tilde{v}_y}^2+{\tilde{v}_z}^2+2{\tilde{v}_x}{\tilde{v}_u}+2{\tilde{v}_y}{\tilde{v}_u}+2{\tilde{v}_z}{\tilde{v}_u}+4{\tilde{v}_u}^2) ,   
\end{eqnarray*}
where $G_{au}=g(X_a,\frac{\partial}{\partial u})$ and $G_{ab}=g_{ab}=g(X_a,X_b)$. For what follows, it is important to note that $G_{au}=1$ (for all $a$'s). 

From the current frame, we can  create an orthogonal basis $\{X_u\}$ for $\mathcal{D}^g$, where 
$$
X_u=\frac{\partial}{\partial u}-G_{au}G^{ab}X_b=\frac{\partial}{\partial u}-X_x-X_y-X_z=\frac{\partial}{\partial u}-\frac{\partial}{\partial x}-\frac{\partial}{\partial y}-\frac{\partial}{\partial z}.
$$
We denote the quasi-velocities with respect to this basis $\{X_a,X_u\}$ by $(v^a,v^u)$ and the corresponding  coefficients of the kinetic energy metric as $g_{ab}=g(X_a,X_b), g_{au}=g(X_a,X_u)$ and $g_{uu}=g(X_u,X_u)$. 

The goal, here, is to investigate both the $\varphi$-simplicity property and the condition $(A')$. For this we need the bracket coefficients $R_{ab}^u$ and $R_{ab}^c$ with respect to the orthogonal basis $\{X_a,X_u\}$: 
$$
[X_x,X_y]=2\frac{\partial}{\partial u}=2(X_u+X_x+X_y+X_z),\quad [X_x,X_z]=-2\frac{\partial}{\partial u}=-2(X_u+X_x+X_y+X_z),
$$ 
$$
[X_y,X_z]=2\frac{\partial}{\partial u}=2(X_u+X_x+X_y+X_z).
$$
From this, we deduce that $R_{xy}^u=2,R_{xz}^u=-2$ and $R_{yz}^u=2$ and $R_{xy}^a=2,R_{xz}^a=-2$ and $R_{yz}^a=2$ (for all $a$'s).

If the system is $\varphi$-simple, then we should be able to find a solution $\varphi$ of the system of partial differential equations given by 
$$ 
\frac{\partial \varphi}{\partial q^b}\delta_c^d-\frac{\partial\varphi}{\partial q^c }\delta_b^d=R_{cb}^d.
$$
When $b=x,c=y$ we get for $d=x,y,z$ the following three equations 
$$ \frac{\partial \varphi}{\partial x}=R_{xy}^x=2,\quad \frac{\partial \varphi}{\partial y}=2,\quad 0=0.$$
When we take $b=z,c=x$ we get for $d=x,y,z$
$$ 
\frac{\partial\varphi}{\partial z}=R_{xz}^x=-2,\quad 0=0,\quad \frac{\partial \varphi}{\partial x}=-2.
$$
Since this is an inconsistency, there is no such $\varphi$. In fact, it is not a surprise that the system is not $\varphi$-simple, since it also does not satisfy the required cyclic condition: 
$$
G_{bu}R_{ca}^u+G_{cu}R_{ab}^u+G_{au}R_{bc}^u=0.
$$
Indeed, if we take for example $ a=x,b=y$ and $c=z$, we get that 
$$
G_{yu}R_{zx}^u+G_{zu}R_{xy}^u+G_{xu}R_{yz}^u=2+2+2=6\neq 0.
$$

Now, we look at condition $(A')^G$, that is: condition $(A')$, where we substitute  $\overline{g}_{au}=-G_{au}=-1$. The condition becomes here
$$ 
-G_{bu}R_{ac}^u-G_{au}R^u_{bc}-g_{bc}\frac{\partial\nvarphi}{\partial q^a}-g_{ac}\frac{\partial\nvarphi}{\partial q^b}+2g_{ab}\frac{\partial\nvarphi}{\partial q^c}  = 0.
$$
For $a=x,b=y,c=z$ we get the differential equation $$ \frac{\partial\nvarphi}{\partial x}+\frac{\partial\nvarphi}{\partial y}=2\frac{\partial \nvarphi}{\partial z}.
$$
For $a=z,b=y,c=x$ we have $$ \frac{\partial\nvarphi}{\partial z}+\frac{\partial\nvarphi}{\partial y}=2\frac{\partial\nvarphi}{\partial x}.
$$
When $a=x,b=z,c=y$, the equation becomes 
$$ \frac{\partial\nvarphi}{\partial x}+\frac{\partial\nvarphi}{\partial z}=2\frac{\partial\nvarphi}{\partial y}.
$$
Other combinations give the same equations as the ones above. This system of partial differential equations has a solution  
$$ 
\nvarphi(x,y,z)=h(x+y+z),
$$
where $h$ is any smooth function of one variable.

We conclude that, despite the fact that the system is not $\varphi$-simple, we do have a projective geodesic extension that is a $\mathcal{D}$-conformal modification of $g$. Indeed, we have that  $(\nphi=\nvarphi\circ\pi,\overline{g}_{ai}=-G_{ai}=-1)$, for the $\nvarphi$ above, is such a projective geodesic extension. The nonholonomic trajectories are pregeodesics of the metric defined by the  Lagrangian
\begin{eqnarray*}
\hat{L}&=&\frac{1}{2}\hat{g}(v,v) = \frac{1}{2}e^{2\nvarphi}\overline{g}(v,v)=\frac{1}{2}e^{2\nvarphi}(\overline{g}_{ab}v^a v^b+2\overline{g}_{au}v^av^u+\overline{g}_{uu}(v^u)^2) \\
&=&\frac{1}{2}e^{2\nvarphi}(G_{ab}v^a v^b-2G_{au}v^av^u+\overline{g}_{uu}(v^u)^2)\\ 
&=&\frac{1}{2}e^{2h(x+y+z)}(v_x^2+v_y^2+v_z^2-2{v_x}{v_u}-2{v_y}{v_u}-2{v_z}{v_u}+\overline{g}_{uu}v_u^2),    
\end{eqnarray*}
where $\hat L$ is given here in the coordinates $(x,y,z,u,v_x,v_y,v_z,v_u)$  with respect to the basis $\{X_a,X_u\}$. We may choose $\overline{g}_{uu}=\alpha\in\mathbb{R}$  in such a way that this metric $\hat{g}$ is positive definite. We need to choose the coefficients $\overline{g}_{uu}$ such that the matrix 
$$\frac{1}{2}e^{2h(x+y+z)}\begin{pmatrix}
    1&0&0&-1\\0&1&0&-1\\0&0&1&-1\\-1&-1&-1&\alpha
\end{pmatrix}$$
is positive definite.  If we set $a:=\frac{1}{2}e^{2h(x+y+z)}$, then the characteristic polynomial of this matrix is given by $(a-\lambda)^2(a^2\alpha-3a^2+(-a\alpha-a)\lambda+\lambda^2)=0 $.  This means that there exists a (strictly positive)  eigenvalue $\lambda=a$, with multiplicity 2. The two other eigenvalues have a sum of $a(\alpha+1)$ and product $a^2(\alpha-3)$. We can ensure that they are both strictly larger than zero by taking e.g.\ $\alpha=\overline{g}_{uu}=42$. 

\section{Hamiltonian reparametrization by geodesic equations} 


So far, we have only discussed projective geodesics extensions. They have the property that the unreduced nonholonomic trajectories can be regarded as reparametrized geodesics of a Riemannian metric. Now, we look at the solutions of the reduced equations. We have said in the Introduction that, when they  can be rewritten as 
Hamiltonian equations with respect to some symplectic or Poisson structure, we call this a Hamiltonian reparametrization.

In \cite{Garcia}, it is shown that, when the system is $\varphi$-simple, the solutions of the reduced Chaplygin equations give such a Hamiltonian reparametrization. In \cite{Simoes} it is further shown that in the  purely kinetic case, these reduced equations can be regarded, themselves, as pregeodesics of a metric on the reduced space $Q/G$. They call this metric the {\em canonical metric} $g^{can}$. If the Lagrangian function is given  by $L(v)=\frac{1}{2}g(v,v)$, then the expression of this metric on $Q/G$ is 
$$
g^{can}=e^{2\varphi}{g}^{red},
$$
where $\varphi\in\mathcal{C}^\infty(Q/G)$ is the function that satisfies the $\varphi$-simplicity condition and ${g}^{red}$ is the reduced metric (\ref{gred}) that we had already encountered in Section~\ref{prelimChap}.



The authors of \cite{Simoes} then use this to create (what we now call) a projective geodesic extension $\Gamma^{\h}$ of the kinetic nonholonomic system $\Gammanh$ (after reparametrization). The vector field $\Gamma^{\h}$ is the geodesic spray of the metric on $Q$ that is defined from the canonical metric $g^{can}$ on $Q/G$ by:
\begin{enumerate}
    \item $h(X_a,X_b)=g^{can}\displaystyle\left(\frac{\partial}{\partial q^a},\frac{\partial}{\partial q^b}\right)$ for all $X_a,X_b\in{\rm Sec}(\mathcal{D}$);
    \item $h(V_i,V_j)=g(V_i,V_j)$ for all $V_i,V_j\in{\rm Sec}(V\pi)$;
    \item $h(X_a,V_k)=0$ for all $X_a\in{\rm Sec}(\mathcal{D}),V_k\in {\rm Sec}(V\pi)$. 
\end{enumerate}
Since here specifically $h_{ij}=G_{ij}$, we already know from Propositon~\ref{prop1} that, thanks to the freedom in ${\hat g}_{ij}$,  this is actually only one member of a whole class of $\ort$-orthogonal projective geodesic extensions.
 
This is not the approach we have taken so far. In the first part of this paper we considered a more general class of nonholonomic systems (more general than Chaplygin systems). Since they do not necessarily have a symmetry group, our main goal was to obtain a (projective) geodesic extension without having to reduce the system. Indeed, conditions $(A')$ and $(B')$ result in a geodesic extension (after reparametrization) on the original configuration space $Q$, without using the information of the reduced dynamics. Nevertheless, once we have a projective geodesic extension on the full space $Q$, one may wonder whether it also leads to a representation of the reduced equations as geodesic equations. In this section, we show that, again, $\varphi$-simplicity is not a necessary condition for this to happen, if we extend our point of view  from only the case where $\overline{g}_{ai}=-G_{ai}$.  

Before continuing, we remark first that, since the reduced equations are a full system of second-order differential equations on $Q/G$, when we reinterpret  them as (pre)geodesics, we can no longer speak of an `extension', since there will be a one-to-one identification between the solutions of the reduced equations and the geodesics.  

\begin{prop} \label{prop14}     Suppose that we have a projective geodesic extension  $(\overline{g}_{ai}, \nphi=\nvarphi\circ\pi)$ for a Chaplygin system.
   
\begin{enumerate} \item[(i)] If $\gamma$ satisfies 
    $$
\iota_{\redvf}\gamma=\alphabar,
$$
(or, equivalently, if $B=\gamma-\Xi^G$ is a dynamical gauge transformation),
    then the projective geodesic extension leads to a Hamiltonian reparametrization: the reduced trajectories are (a complete set of) pregeodesics of a Riemannian metric.

    \item[(ii)]
    If the projective geodesic extension is  $\ort$-orthogonal (of the form $(\overline{g}_{ai}=-G_{ai}, \nphi=\nvarphi\circ\pi)$), we always have a Hamiltonian reparametrization.  
 \end{enumerate}
\end{prop}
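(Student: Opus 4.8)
The plan is to prove (i) directly on the reduced tangent bundle $T(Q/G)$, by exhibiting the reduced SODE $\redvf$ as \emph{projectively equivalent} to the geodesic spray of the conformally rescaled metric $g^{can}:=e^{2\nvarphi}g^{red}$ on $Q/G$ (the evident analogue of the canonical metric of \cite{Simoes}, but built now from our $\nvarphi$ rather than from a $\varphi$-simplicity function). Part (ii) then follows with essentially no extra work: in the $\ort$-orthogonal case $\gamma=\gamma^G$, and the hypothesis $\iota_{\redvf}\gamma=\alphabar$ of part (i) is exactly the relation (\ref{igammaisalpha}), which holds for \emph{every} SODE on $Q/G$, in particular for $\redvf$.

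\emph{Step 1 (a scalar identity).} Since $(\overline{g}_{ai},\nphi)$ is a projective geodesic extension, condition $(A')$ holds, so by Lemma~\ref{lem:equiv} we have $\gamma=d\nvarphi\wedge\theta_l$. Feeding this into the assumption $\iota_{\redvf}\gamma=\alphabar$, expanding $\iota_{\redvf}(d\nvarphi\wedge\theta_l)=(\redvf(\nvarphi))\theta_l-(\theta_l(\redvf))\,d\nvarphi$, and using $\theta_l(\redvf)=g_{ab}\dot q^a\dot q^b$ together with the local expression $\alphabar=-g_{db}R^d_{ac}\dot q^a\dot q^b\,dq^c$, one obtains after comparing $dq^c$-components the semibasic identity
\[
g_{db}R^d_{ac}\dot q^a\dot q^b=(g_{ab}\dot q^a\dot q^b)\frac{\partial\nvarphi}{\partial q^c}-\Big(\frac{\partial\nvarphi}{\partial q^a}\dot q^a\Big)g_{bc}\dot q^b .
\]
This is precisely the contraction of the reduced condition $(A')^G$ with $\dot q^a\dot q^b$ (after using the identity (\ref{useful})); note that $(A')$ on its own does \emph{not} deliver it when $\overline{g}_{ai}\neq -G_{ai}$, which is exactly why the additional hypothesis of (i) is imposed.

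\emph{Step 2 (comparison of sprays).} Recall from Section~\ref{prelimChap} that $\redvf=\Gamma_{g^{red}}+g^{ce}g_{db}R^d_{ac}\dot q^a\dot q^b\,\partial/\partial\dot q^e$, while the conformal-change formula for the Levi-Civita connection recalled above, applied on $Q/G$ with base metric $g^{red}$ and factor $\nvarphi$, gives
\[
\Gamma_{g^{can}}=\Gamma_{g^{red}}-2\Big(\frac{\partial\nvarphi}{\partial q^c}\dot q^c\Big)\Delta'+(g_{ab}\dot q^a\dot q^b)\,g^{ce}\frac{\partial\nvarphi}{\partial q^c}\,\frac{\partial}{\partial\dot q^e},
\]
with $\Delta'$ the Liouville field on $T(Q/G)$. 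Subtracting the two, lowering the free index with $g_{ef}$, and substituting the identity of Step~1, all terms cancel except one and one is left with $\redvf-\Gamma_{g^{can}}=\big(\tfrac{\partial\nvarphi}{\partial q^a}\dot q^a\big)\Delta'$. As $P:=\tfrac{\partial\nvarphi}{\partial q^a}\dot q^a$ is positively $1$-homogeneous, $\redvf$ and the geodesic spray of $g^{can}$ are projectively equivalent quadratic sprays; hence the base integral curves of $\redvf$ — i.e.\ the reduced trajectories — are, after an orientation-preserving reparametrization, exactly the geodesics of the Riemannian metric $g^{can}$ (Riemannian since $e^{2\nvarphi}>0$ and $g^{red}$ is positive-definite), and $P$ is the reduction of the function in (\ref{Pform}). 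This is the claimed Hamiltonian reparametrization. For (ii), as already noted, (\ref{igammaisalpha}) makes $\iota_{\redvf}\gamma^G=\alphabar$ automatic, so (i) applies with $\nvarphi$ any solution of $(A')^G$.

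\emph{Expected difficulty.} There is no conceptual obstacle; the one delicate point is the bookkeeping in Step~2 — keeping track of the $\dot q^a\dot q^b$-factors, of the non-symmetry of $R^d_{ac}$ in $a,c$, and of where (\ref{useful}) must be invoked to convert between $G_{ej}R^j_{ab}$ and $g_{ce}R^c_{ab}$. As a coordinate-free check on (ii) one may also observe that in the $\ort$-orthogonal case the ($G$-invariant) metric $\hat g$ turns $\pi\colon(Q,\hat g)\to(Q/G,g^{can})$ into a Riemannian submersion with horizontal distribution $\mathcal D$, so that the horizontal pregeodesics of $\hat g$ — which include the nonholonomic trajectories — project onto pregeodesics of $g^{can}$.
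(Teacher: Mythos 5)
Your proof is correct and reaches the same structural conclusion as the paper's — that $\redvf$ and the geodesic spray of $g^{can}=e^{2\nvarphi}g^{red}$ differ by $\redvf(\nvarphi)\,\Delta$, hence are projectively equivalent — but you verify this key step by a different method. The paper argues symplectically: it computes $\omega_{\h}=e^{2\nvarphi}(\omega_l-2\gamma)$, writes $\Gamma^{\h}=\redvf+Z$ with $Z$ vertical (both being quadratic sprays), and solves $\iota_{\Gamma^{\h}}\omega_{\h}=dE_{\h}$ for $Z$ under the ansatz $Z=\tilde P\Delta$, using the hypothesis $\iota_{\redvf}\gamma=\alphabar$ together with $\iota_\Delta\omega_l=-\theta_l$ and $\iota_\Delta\gamma=0$. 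You instead turn the hypothesis into the semibasic identity $g_{db}R^d_{ac}\dot q^a\dot q^b=2l\,\partial\nvarphi/\partial q^c-\redvf(\nvarphi)\,g_{bc}\dot q^b$ (correctly identified as the $\dot q^a\dot q^b$-contraction of $(A')^G$ via (\ref{useful})) and subtract the spray coefficients directly, using the conformal-change formula for the Christoffel symbols on $Q/G$. Both routes are valid; the paper's is more intrinsic and shorter once the two-forms are assembled, while yours is more elementary and, as a bonus, pins down the projective factor unambiguously as $\redvf=\Gamma_{g^{can}}+\redvf(\nvarphi)\Delta$ (the paper's chain of equivalences mishandles the sign in $\iota_{\redvf}(d\nvarphi\wedge\theta_l)$ and in the placement of $\alphabar$, ending with the opposite sign for $\tilde P$; your sign is the correct one, and in any case the sign is immaterial for projective equivalence). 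Your handling of (ii) via (\ref{igammaisalpha}) is exactly the paper's argument, and the closing Riemannian-submersion remark is a sound independent check but not needed for the proof.
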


\begin{proof}
 (i) When conditions $(A')$ and $(B')$ are satisfied for the couple $(\overline{g}_{ai}, \nphi=\nvarphi\circ\pi)$, then $d\nvarphi\wedge\theta_l=\gamma$. Moreover, we assume that 
$
\alphabar=\iota_{\redvf}\gamma=\redvf(\nvarphi)\theta_l -2ld\nvarphi
$, 
which means that the reduced vector field can be determined from the equation $\iota_{\redvf}(\omega_l-\gamma)=dE_l$, or, by Lemma~\ref{lem:closed} that $e^{-\nvarphi}\redvf$ is a Hamiltonian vector field with respect to $e^{\nvarphi} \tilde{\tilde\omega}$.
 
 For the Poincar\'e forms $\theta_l=g_{ab}\dot{q}^adq^b$ and $\omega_l=-d\theta_l=-d(g_{ab}\dot{q}^a)\wedge dq^b$, we always have $\iota_{\Delta}\theta_l=0$ and $\iota_{\Delta}\omega_l =-\iota_{\Delta}d(g_{ab}\dot{q}^a)=-g_{ab}\dot{q}^adq^b=-\theta_l$, where $\Delta={\dot q}^a \displaystyle \frac{\partial}{\partial {\dot q}^a}$ is the Liouville vector field. Also $\iota_\Delta\alphabar=0$.

Let now $\Gamma^\h$ be the geodesic spray on $T(Q/G)$ of the metric $\h=(\h_{ab})=(e^{2\nvarphi}g_{ab})$ on $Q/G$. This vector field is characterized by the symplectic equation 
$$ 
\iota_{\Gamma^\h}\omega_\h=dE_\h,
$$
where $E_\h=e^{2\nvarphi}g_{ab}\dot{q}^a\dot{q}^b=e^{2\nvarphi}E_l=e^{2\nvarphi}l$. Since $\theta_\h= e^{2\nvarphi}g_{ab}\dot{q}^adq^b=e^{2\nvarphi}\theta_l$, the 2-form $\omega_\h=-d\theta_\h$ is 
\begin{eqnarray*}
    \omega_\h&=&-d(e^{2\nvarphi} g_{ab}\dot{q}^a)\wedge dq^b= e^{2\nvarphi}\omega_l-2 g_{ab}\dot{q}^a e^{2\nvarphi}d\nvarphi \wedge dq^b= e^{2\nvarphi}\omega_l-2e^{2\nvarphi}d\nvarphi\wedge\theta_l\\
    &=& e^{2\nvarphi}(\omega_l-2\gamma).
\end{eqnarray*}

The above geodesic spray $\Gamma^\h$ and the reduced nonholonomic vector field $\redvf$ are both quadratic sprays on $T(Q/G)$. Therefore, their difference is a vertical vector field $Z$,
$$
\Gamma^\h=\redvf+Z.
$$
If we can show that there exists a linear function $\tilde {P}$ such that $Z=\tilde{P}\Delta$, we know that there will exist a reparametrization between the base integral curves of both vector fields, and we may conclude the proof.

 With the above input, the characterizing equation $\iota_{\Gamma^\h}\omega_\h=dE_\h$ for $\Gamma^\h$ becomes 
    \begin{eqnarray*}
       e^{2\nvarphi}\iota_Z\omega_l-2e^{2\nvarphi}\iota_Z\gamma +e^{2\nvarphi}\iota_{\redvf}\omega_l-2e^{2\nvarphi}\iota_{\redvf}\gamma &=& 2e^{2\nvarphi}d\nvarphi l+e^{2\nvarphi}dE_l\\
       \Leftrightarrow\qquad \iota_Z (\omega_l-2\gamma)+\alphabar&=&2ld\nvarphi
    \end{eqnarray*}

We simply check that this uniquely determines $Z$ of the form $\tilde{P}\Delta$. Indeed, if we insert this we get
\begin{eqnarray*}
    \Leftrightarrow\qquad \tilde{P}\iota_{\Delta}(\omega_l-2\gamma)&=& 2ld\nvarphi-\alphabar\\
    \Leftrightarrow\quad \tilde{P}\iota_{\Delta}\omega_l-2\tilde{P}\iota_{\Delta}\gamma&=& 2ld\nvarphi -\iota_{\redvf}\gamma\\
   \Leftrightarrow \qquad\qquad -\tilde{P}\theta_l-0&=& 2ld\nvarphi-\iota_{\redvf}(d\nvarphi\wedge\theta_l)\\
  \Leftrightarrow \qquad\qquad\qquad -\tilde{P}\theta_l&=& 2ld\nvarphi-\redvf(\nvarphi)\theta_l-\iota_{\redvf}\theta_l d\nvarphi\\
  \Leftrightarrow \qquad\qquad\qquad-\tilde{P}\theta_l&=&2ld\nvarphi-\redvf(\nvarphi)\theta_l-2ld\nvarphi\\
   \Leftrightarrow \qquad\qquad\qquad-\tilde{P}\theta_l&=&-\redvf(\nvarphi)\theta_l
\end{eqnarray*}
Therefore we may take $\tilde{P}=\redvf(\nvarphi)$ and this will satisfy 
$$
\Gamma^\h=\redvf+Z=\redvf+\redvf(\nvarphi)\Delta.
$$
 From this, we may conclude that the base integral curves of the reduced equations are indeed pregeodesics of the metric $\h$.



(ii) In part (i) we relied on the assumption that $\iota_{\redvf}\gamma=\alphabar$.
When we have a projective geodesic extension of the form $(\overline{g}_{ai}=-G_{ai}, \nphi=\nvarphi\circ\pi)$,  conditions $(A')$ and $(B')$ are equivalent with the condition $(A')^G$. This condition is given by 
$
d\nvarphi\wedge\theta_l=\gamma^G.
$
We have already shown that the 1-form on the right hand side always satisfies $\iota_{\redvf}\gamma^G=\alphabar$. That is why, in this case, the extra condition in the statement of the Proposition will always be satisfied. 
\end{proof}

Remark that the "mathematical example" we constructed in Section~\ref{mathex} fits the requirements of Proposition~\ref{prop14} (ii), without being  $\varphi$-simple. Therefore, it is in fact an answer to a question at the end of \cite{Garcia}: to find examples of Hamiltonian reparametrizations of the reduced equations  which are not  $\varphi$-simple.

In the next Lemma, we give the coordinate expression of the relevant condition in Proposition~\ref{prop14}.
\begin{lem}
    The condition $\iota_{\redvf}\gamma=\alphabar$ is satisfied if and only if 
    $$ C_{bk}R_{ac}^k+C_{ak}R^k_{bc}=0,$$
    where $C_{bk}=\overline{g}_{bk}+G_{bk}$.
\end{lem}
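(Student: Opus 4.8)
The plan is to compute both sides of the identity $\iota_{\redvf}\gamma = \alphabar$ in the local coordinates that have already been set up, and compare coefficients. Recall from the excerpt that $\gamma = \gamma_{bca}\dot q^a\, dq^b\wedge dq^c$ with $\gamma_{bca} = \tfrac16(\overline{g}_{bk}R^k_{ac} - \overline{g}_{ck}R^k_{ab} + 2\overline{g}_{ak}R^k_{bc})$, and that the gyroscopic one-form is $\alphabar = -R^k_{bc}G_{ak}\dot q^a\dot q^b\, dq^c$. We also know (from the displayed computation just before $\gamma = \Xi + \iota_\Gamma\Theta$, specialized to a general SODE $\redvf$ with $\dot q^e$ as its base component) that $\iota_{\redvf}\gamma = \tfrac16(\overline{g}_{bk}R^k_{ac} - \overline{g}_{ck}R^k_{ab} - \overline{g}_{ak}R^k_{bc})\dot q^a\,dq^b\wedge dq^c$ — this is exactly the coordinate expression derived there for $\iota_\Gamma\Theta$ plus $\iota_\Gamma\Xi$, i.e.\ for $\iota_\Gamma\gamma$. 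So the left-hand side is already in hand.

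First I would write $\iota_{\redvf}\gamma$ in the form $M_{bca}\dot q^a\, dq^b\wedge dq^c$ with $M_{bca} = \tfrac16(\overline{g}_{bk}R^k_{ac} - \overline{g}_{ck}R^k_{ab} - \overline{g}_{ak}R^k_{bc})$, and antisymmetrize the coefficient in $b,c$ to get the genuine two-form components $M_{bca}-M_{cba}$. Similarly, $\alphabar$ is a one-form; but the condition $\iota_{\redvf}\gamma = \alphabar$ compares a one-form to a two-form, so it is cleaner to instead use the equivalent statement $\iota_{\redvf}(\gamma - \Xi^G) = \alphabar - \iota_{\redvf}\Xi^G = \alphabar - \alphabar = 0$ only after recalling $\iota_{\redvf}\Xi^G = \alphabar$ from the excerpt — but that is precisely the content we must not assume, since $\Xi^G$ uses $G_{ai}$ whereas $\gamma$ uses $\overline g_{ai}$. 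The honest route: subtract, using $\overline g_{ai} = G_{ai} + C_{ai}$, so that $\iota_{\redvf}\gamma = \iota_{\redvf}\gamma^{\Xi\text{-part}} + \iota_{\redvf}\gamma^{C\text{-part}}$, where the $G$-part gives $\iota_{\redvf}\Xi^G = \alphabar$ (this piece is legitimately quoted from the excerpt's identity $\iota_{\Gamma}\gamma^G = \iota_{\Gamma}\Xi^G = \alphabar$ applied with $\overline g_{ai}=-G_{ai}$, taking care of the sign $\gamma^G = -$[$G$-part of $\gamma$]). Then $\iota_{\redvf}\gamma = \alphabar$ holds iff the $C$-part of $\iota_{\redvf}\gamma$ vanishes, i.e.\ iff $(C_{bk}R^k_{ac} - C_{ck}R^k_{ab} - C_{ak}R^k_{bc})\dot q^a(dq^b\wedge dq^c) = 0$.

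Next I would antisymmetrize that last expression in $b$ and $c$: the two-form coefficient is $(C_{bk}R^k_{ac} - C_{ck}R^k_{ab} - C_{ak}R^k_{bc}) - (C_{ck}R^k_{ab} - C_{bk}R^k_{ac} - C_{ak}R^k_{cb})$. Using $R^k_{ac}$ antisymmetric in its lower indices (these are curvature/bracket coefficients of the connection, $R^k_{ac}=B^k_{ac}=-B^k_{ca}$), the $C_{ak}$ terms combine to $-C_{ak}(R^k_{bc}-R^k_{cb}) = -2C_{ak}R^k_{bc}$, and the other terms double up to $2C_{bk}R^k_{ac} - 2C_{ck}R^k_{ab}$. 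Contracting with $\dot q^a$ and using symmetry of $\dot q^a\dot q^b$ (so we may freely symmetrize any index pair summed against $\dot q\,\dot q$) collapses this, after relabelling, to the statement that $C_{bk}R^k_{ac} + C_{ak}R^k_{bc} = 0$ (with the understanding that the vanishing is as a symmetric-in-$a,b$ expression, exactly the convention used throughout the paper for conditions like $(A')$). So the identity $\iota_{\redvf}\gamma = \alphabar$ is equivalent to $C_{bk}R^k_{ac} + C_{ak}R^k_{bc} = 0$.

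The main obstacle is bookkeeping rather than conceptual: keeping the antisymmetrizations in $(b,c)$ straight against the symmetrizations in $(a,b)$ induced by the $\dot q^a\dot q^b$ factors, and correctly matching the sign conventions — in particular the minus sign relating $\gamma^G$ to the $G$-part of $\gamma$, and the sign in $\alphabar$. One must also double-check that $R^k_{ac}$ really is antisymmetric in $a,c$ in the frame being used (it is, since $R^k_{ac} = B^k_{ac}$ are curvature components, as recorded in Section~\ref{prelimChap}); this antisymmetry is what makes the $C_{ak}$ terms survive rather than cancel. Modulo these index gymnastics, the argument is a direct computation, so I would present it compactly: expand $\iota_{\redvf}\gamma$, split $\overline g = G + C$, invoke $\iota_{\redvf}\gamma^G = \alphabar$ from the excerpt, and reduce the residual $C$-terms as above.
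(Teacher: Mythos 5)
Your overall strategy --- split ${\overline g}_{ai}=-G_{ai}+C_{ai}$, use the paper's identity $\iota_{\Gamma}\gamma^G=\alphabar$ to absorb the $G$-part, and reduce the condition to the vanishing of the $C$-part of $\iota_{\redvf}\gamma$ --- is sound and would work; it is a mild reorganization of the paper's argument, which instead plugs the full ${\overline g}_{ai}$ into $\iota_{\redvf}\gamma=\alphabar$ and lets the $G$-terms sit on the right-hand side. (Note the sign in your decomposition: since $C_{ai}={\overline g}_{ai}+G_{ai}$, you must write ${\overline g}_{ai}=-G_{ai}+C_{ai}$, not $G_{ai}+C_{ai}$; then the $G$-part of $\gamma$ is exactly $\gamma^G$ and no further sign juggling is needed.)

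The execution, however, contains a genuine error. The contraction $\iota_{\redvf}\gamma$ is a semi-basic \emph{one}-form --- inserting a vector field into a 2-form lowers the degree --- so there is no type mismatch in $\iota_{\redvf}\gamma=\alphabar$, and your identification of $\iota_{\redvf}\gamma$ with $\tfrac16({\overline g}_{bk}R^k_{ac}-{\overline g}_{ck}R^k_{ab}-{\overline g}_{ak}R^k_{bc})\dot q^a\,dq^b\wedge dq^c$ is wrong on two counts: that displayed expression is $\iota_\Gamma\Theta=\gamma-\Xi$, not $\gamma$ (whose coefficient carries $+2{\overline g}_{ak}R^k_{bc}$ rather than $-{\overline g}_{ak}R^k_{bc}$), and it is still a 2-form, not a contraction. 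This is fatal at the last step: if you take your expression $(C_{bk}R^k_{ac}-C_{ck}R^k_{ab}-C_{ak}R^k_{bc})\dot q^a\,dq^b\wedge dq^c$, contract it with the SODE to get the 1-form with coefficients $(\cdots)\dot q^a\dot q^b$, and symmetrize in $(a,b)$ as the factor $\dot q^a\dot q^b$ forces, \emph{everything} cancels identically (this is just the statement $\iota_\Gamma\iota_\Gamma\Theta=0$, equivalently $\iota_\Gamma\gamma=\iota_\Gamma\Xi$ noted in the paper), so you obtain $0=0$ and no condition on $C$ whatsoever --- the claimed collapse to $C_{bk}R^k_{ac}+C_{ak}R^k_{bc}=0$ does not follow. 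The correct computation starts from
$$
\iota_{\redvf}\gamma[C]=\tfrac13\left(C_{bk}R^k_{ac}-C_{ck}R^k_{ab}+2C_{ak}R^k_{bc}\right)\dot q^a\dot q^b\,dq^c ;
$$
symmetrizing the coefficient in $(a,b)$ kills the $C_{ck}$-term (by antisymmetry of $R^k_{ab}$) and the remaining terms combine to $3\left(C_{bk}R^k_{ac}+C_{ak}R^k_{bc}\right)$, which yields the condition of the Lemma.
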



\begin{proof} The condition is satisfied if, and only if, 
 \begin{eqnarray*}
     \frac{1}{6}(\overline{g}_{bk}R_{ac}^k-\overline{g}_{ck}R_{ab}^k+2\overline{g}_{ak}R^k_{bc})\dot{q}^a\dot{q}^b dq^c&=& -G_{ak} R_{bc}^k \dot{q}^a\dot{q}^b dq^c\\
     \Leftrightarrow \qquad \frac{1}{3}(\overline{g}_{bk}R_{ac}^k-\overline{g}_{ck}R_{ab}^k+2\overline{g}_{ak}R^k_{bc})\dot{q}^a\dot{q}^b &=& -G_{ak} R_{bc}^k\dot{q}^a\dot{q}^b\\
     \Leftrightarrow \qquad \frac{1}{3}(\overline{g}_{bk}R_{ac}^k-\overline{g}_{ck}R_{ab}^k+2\overline{g}_{ak}R^k_{bc} + \overline{g}_{ak}R_{bc}^k-\overline{g}_{ck}R_{ba}^k+2\overline{g}_{bk}R^k_{ac} ) &=& -G_{ak} R_{bc}^k-G_{bk}R_{ac}^k\\
     \Leftrightarrow \qquad  \overline{g}_{bk}R_{ac}^k+ \overline{g}_{ak}R_{bc}^k &=& -G_{ak} R_{bc}^k-G_{bk}R_{ac}^k\\
    \Leftrightarrow \qquad (\overline{g}_{bk}+G_{bk}   )R_{ac}^k+( \overline{g}_{ak}+G_{ak}   )R_{bc}^k&=&0\\
    \Leftrightarrow\qquad C_{bk} R_{ac}^k +C_{ak}R_{bc}^k&=&0
 \end{eqnarray*}
 And this is exactly what we wanted to show.
\end{proof}

As we had already announced in Section~\ref{sec:pge}, we end this paper by explaining how our approach using projective transformations of quadratic sprays (in the sense of adding a term along the Liouville vector field to form a projective class of sprays) relates to the approach that is used in e.g.\ \cite{Simoes}. We may also look at the reparametrizations  from another point of view, by considering the map 
$$
\psi:T(Q/G)\rightarrow T(Q/G),(q^a,\dot{q}^a)\mapsto (q^a,e^{-\nvarphi}\dot{q}^a).
$$
The next proposition, essentially says again that the reduced nonholonomic trajectories are reparametrizations of the geodesics of $\h$.
\begin{prop} \label{prop:last}
    Suppose that we have a projective geodesic extension  $(\overline{g}_{ai}, \nphi=\nvarphi\circ\pi)$ for a Chaplygin system. If $\gamma$ satisfies 
    $$
\iota_{\redvf}\gamma=\alphabar,
$$
the Hamiltonian vector field $e^{-\nvarphi}\redvf$ is $\psi$-related to the geodesic spray $\Gamma^\h$ of  the metric $\h$.
\end{prop}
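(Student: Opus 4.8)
The plan is to avoid any chain-rule bookkeeping and instead intertwine, via $\psi$, the two symplectic-type equations that characterize $\Gamma^\h$ and $e^{-\nvarphi}\redvf$. First I would note that $\psi$ is a diffeomorphism of $T(Q/G)$ — fibrewise it is the linear rescaling by the nowhere-vanishing factor $e^{-\nvarphi(q)}$, with inverse $(q^a,v^a)\mapsto(q^a,e^{\nvarphi(q)}v^a)$ — so that ``$e^{-\nvarphi}\redvf$ is $\psi$-related to $\Gamma^\h$'' is equivalent to the identity $(\psi^{-1})_*\Gamma^\h=e^{-\nvarphi}\redvf$. Recall that $\h=e^{2\nvarphi}g^{red}$, so its Poincar\'e data are $\theta_\h=e^{2\nvarphi}\theta_l$, $\omega_\h=-d\theta_\h$ and $E_\h=e^{2\nvarphi}E_l$, and $\Gamma^\h$ is the unique vector field with $\iota_{\Gamma^\h}\omega_\h=dE_\h$.

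The key computation is to pull these data back along $\psi$. Since $\nvarphi$ and $g_{ab}$ depend only on the base point while $\psi$ scales velocities by $e^{-\nvarphi}$, one gets immediately $\psi^*\theta_\h=e^{\nvarphi}\theta_l$ and $\psi^*E_\h=E_l$, hence $\psi^*(dE_\h)=dE_l$ and
\[
\psi^*\omega_\h=-d(\psi^*\theta_\h)=-d(e^{\nvarphi}\theta_l)=e^{\nvarphi}\bigl(\omega_l-d\nvarphi\wedge\theta_l\bigr)=e^{\nvarphi}\tilde{\tilde\omega},
\]
where in the last step I use that, since $(\overline{g}_{ai},\nphi=\nvarphi\circ\pi)$ is a projective geodesic extension, condition $(A')$ holds and therefore $d\nvarphi\wedge\theta_l=\gamma$ by Proposition~\ref{lem:equiv}. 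Pulling back the defining equation of $\Gamma^\h$ and using $\psi^*(\iota_{\Gamma^\h}\omega_\h)=\iota_{(\psi^{-1})_*\Gamma^\h}(\psi^*\omega_\h)$ (valid because $\psi$ is a diffeomorphism) then gives $\iota_{(\psi^{-1})_*\Gamma^\h}\bigl(e^{\nvarphi}\tilde{\tilde\omega}\bigr)=dE_l$.

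On the other side, the hypothesis $\iota_{\redvf}\gamma=\alphabar$ together with equation~(\ref{symplectic}), i.e.\ $\iota_{\redvf}\omega_l=dE_l+\alphabar$, yields $\iota_{\redvf}\tilde{\tilde\omega}=\iota_{\redvf}(\omega_l-\gamma)=dE_l$, and hence $\iota_{e^{-\nvarphi}\redvf}\bigl(e^{\nvarphi}\tilde{\tilde\omega}\bigr)=dE_l$ — which is precisely the Hamiltonian characterization of $e^{-\nvarphi}\redvf$ already used in the proof of Proposition~\ref{prop14}(i). Thus $(\psi^{-1})_*\Gamma^\h$ and $e^{-\nvarphi}\redvf$ both contract $e^{\nvarphi}\tilde{\tilde\omega}$ to $dE_l$. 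Finally, $e^{\nvarphi}\tilde{\tilde\omega}=e^{\nvarphi}(\omega_l-\gamma)$ is nondegenerate — $\gamma$ is semi-basic, so it modifies only the $dq\wedge dq$ block of $\omega_l$ and leaves the invertible $d\dot q\wedge dq$ block untouched, and the positive factor $e^{\nvarphi}$ preserves nondegeneracy (it is moreover closed, by Lemma~\ref{lem:closed}). Hence the vector field whose contraction with $e^{\nvarphi}\tilde{\tilde\omega}$ equals $dE_l$ is unique, forcing $(\psi^{-1})_*\Gamma^\h=e^{-\nvarphi}\redvf$, which is the claim.

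The only delicate point is the bookkeeping of conformal weights in the pullback: one must check that the factor $e^{2\nvarphi}$ built into $\theta_\h$ and $E_\h$ and the factor $e^{-\nvarphi}$ built into $\psi$ combine to give exactly $\psi^*\omega_\h=e^{\nvarphi}\tilde{\tilde\omega}$ and $\psi^*E_\h=E_l$ — with these precise powers the two symplectic equations coincide on the nose rather than merely up to a conformal factor, which is what makes the uniqueness argument bite. As an alternative to the symplectic route, one could verify $T\psi\circ(e^{-\nvarphi}\redvf)=\Gamma^\h\circ\psi$ by a direct chain-rule computation, using the projective relation $\Gamma^\h=\redvf+\redvf(\nvarphi)\Delta$ of Proposition~\ref{prop14}(i) and the $2$-homogeneity of the quadratic force term of $\redvf$; I would nonetheless present the symplectic argument, since it is shorter and insensitive to sign conventions.
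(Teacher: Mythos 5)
Your proposal is correct and follows essentially the same route as the paper: pull back $\theta_\h$, $E_\h$ and $\omega_\h$ along $\psi$ to get $e^{\nvarphi}\theta_l$, $E_l$ and $e^{\nvarphi}\tilde{\tilde\omega}$ (using $(A')$ in the form $d\nvarphi\wedge\theta_l=\gamma$), identify $e^{-\nvarphi}\redvf$ as the Hamiltonian vector field of $E_l$ with respect to $e^{\nvarphi}\tilde{\tilde\omega}$ via the hypothesis $\iota_{\redvf}\gamma=\alphabar$, and conclude by nondegeneracy. The only cosmetic difference is that you pull the defining equation of $\Gamma^\h$ back while the paper pushes the reduced equation forward, which is the same argument read in the opposite direction.
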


\begin{proof}
If we define again $\h=e^{2\nvarphi}g_{ab}\dot{q}^a\dot{q}^b=e^{2\nvarphi}l$ and $\theta_\h=e^{2\nvarphi}g_{ab}\dot{q}^adq^b=e^{2\nvarphi}\theta_l$, then we have 
$$
\psi^* \h=e^{2\nvarphi}g_{ab}e^{-\nvarphi}\dot{q}^a e^{-\nvarphi}\dot{q}^b=l=E_l,
$$
$$
\psi^*\theta_\h=e^{2\nvarphi}g_{ab}e^{-\nvarphi}\dot{q}^a  dq^b=e^{\nvarphi}\theta_l
$$
and 
$$
\psi^*\omega_\h=-d(\psi^*\theta_\h)=-e^\varphi d\nvarphi\wedge\theta_l-e^{\varphi}d\theta_l=e^{\varphi}(\omega_l-\gamma)=e^{\nvarphi}\tilde{\tilde{\omega}}.
$$
Therefore, we may write the reduced system (again under the assumption  
$ \iota_{\redvf}\gamma=\alphabar$) as 
\begin{eqnarray*}
  \iota_{e^{-\nvarphi}\redvf} e^{\nvarphi}\tilde{\tilde{\omega}}&=&dE_l\\
  \Leftrightarrow\qquad \iota_{e^{-\nvarphi}\redvf}\psi^*\omega_\h&=&\psi^*(d\h)\\
 \Leftrightarrow \qquad (\psi^{-1})^*\Big[ \iota_{e^{-\nvarphi}\redvf}\psi^*\omega_\h&=&\psi^*(d\h) \Big]\\
 \Leftrightarrow \qquad \iota_{(\psi^{-1})^{*}(e^{-\nvarphi}\redvf)}\omega_\h&=&d\h.
\end{eqnarray*}

Since $\omega_\h$ is a symplectic form, we may conclude that 
$$
(\psi^{-1})^{*}(e^{-\nvarphi}\redvf)=\Gamma^\h\quad\Rightarrow\quad \psi_{*}(e^{-\nvarphi}\redvf)=\Gamma^\h.
$$
\end{proof}

In case ${\overline g}_{ai} = -G_{ai}$, we know that $e^{-\nvarphi}\redvf$ is a Hamiltonian vector field with respect to $e^{\nvarphi} \tilde{\tilde\omega}$, see the proof of Lemma~\ref{lem:closed} and Proposition~\ref{invvolformG}. In the even more  special case of $\varphi$-simplicity, the above proposition appears as Corollary~3.3  in \cite{Simoes}.

{\bf Acknowledgements.}  We thank the Research Fund of the University of Antwerp (BOF) for its support through the DOCPRO project 49747.

\end{document}